\newcommand{\rid}{{\rm rid}}
\def\UrlSpecials{\do\~{\kern -.15em\lower .7ex\hbox{~}\kern .04em}} \catcode`~=13 
\newcommand{\vecz}{\mathbf{0}}
\newcommand{\norm}[1]{\left\Vert#1\right\Vert}
\newcommand{\normt}[1]{\Vert#1\Vert}
\newcommand{\abs}[1]{\left\lvert#1\right\rvert}
\newcommand{\nn}{\nonumber}
\newcommand{\defeq}{\triangleq}
\newcommand{\barbx}{\overline{\bf x}}
\newcommand{\barbz}{\overline{\bf z}}
\newcommand{\calA}{\mathcal{A}}
\newcommand{\calB}{\mathcal{B}}
\newcommand{\calE}{\mathcal{E}}
\newcommand{\calF}{\mathcal{F}}
\newcommand{\calG}{\mathcal{G}}
\newcommand{\calH}{\mathcal{H}}
\newcommand{\calP}{\mathcal{P}}
\newcommand{\calS}{\mathcal{S}}
\newcommand{\calT}{\mathcal{T}}
\newcommand{\tilcalB}{\widetilde{\calB}}
\newcommand{\ba}{\mathbf{a}}
\newcommand{\bA}{\mathbf{A}}
\newcommand{\bb}{\mathbf{b}}
\newcommand{\bB}{\mathbf{B}}
\newcommand{\bD}{\mathbf{D}}
\newcommand{\be}{\mathbf{e}}
\newcommand{\bg}{\mathbf{g}}
\newcommand{\bH}{\mathbf{H}}
\newcommand{\bI}{\mathbf{I}}
\newcommand{\bL}{\mathbf{L}}
\newcommand{\bM}{\mathbf{M}}
\newcommand{\bp}{\mathbf{p}}
\newcommand{\bQ}{\mathbf{Q}}
\newcommand{\bs}{\mathbf{s}}
\newcommand{\bS}{\mathbf{S}}
\newcommand{\bU}{\mathbf{U}}
\newcommand{\bv}{\mathbf{v}}
\newcommand{\bV}{\mathbf{V}}
\newcommand{\bW}{\mathbf{W}}
\newcommand{\bx}{\mathbf{x}}
\newcommand{\by}{\mathbf{y}}
\newcommand{\bY}{\mathbf{Y}}
\newcommand{\bz}{\mathbf{z}}
\newcommand{\rmc}{\mathrm{c}}
\newcommand{\rmH}{\mathrm{H}}
\newcommand{\bbE}{\mathbb{E}}
\newcommand{\bbN}{\mathbb{N}}
\newcommand{\bbP}{\mathbb{P}}
\newcommand{\bbR}{\mathbb{R}}
\DeclareMathAlphabet{\mathbsf}{OT1}{cmss}{bx}{n}
\DeclareMathAlphabet{\mathssf}{OT1}{cmss}{m}{sl}
\DeclareSymbolFont{bsfletters}{OT1}{cmss}{bx}{n}  
\DeclareSymbolFont{ssfletters}{OT1}{cmss}{m}{n}
\DeclareMathSymbol{\bsfGamma}{0}{bsfletters}{'000}
\DeclareMathSymbol{\ssfGamma}{0}{ssfletters}{'000}
\DeclareMathSymbol{\bsfDelta}{0}{bsfletters}{'001}
\DeclareMathSymbol{\ssfDelta}{0}{ssfletters}{'001}
\DeclareMathSymbol{\bsfTheta}{0}{bsfletters}{'002}
\DeclareMathSymbol{\ssfTheta}{0}{ssfletters}{'002}
\DeclareMathSymbol{\bsfLambda}{0}{bsfletters}{'003}
\DeclareMathSymbol{\ssfLambda}{0}{ssfletters}{'003}
\DeclareMathSymbol{\bsfXi}{0}{bsfletters}{'004}
\DeclareMathSymbol{\ssfXi}{0}{ssfletters}{'004}
\DeclareMathSymbol{\bsfPi}{0}{bsfletters}{'005}
\DeclareMathSymbol{\ssfPi}{0}{ssfletters}{'005}
\DeclareMathSymbol{\bsfSigma}{0}{bsfletters}{'006}
\DeclareMathSymbol{\ssfSigma}{0}{ssfletters}{'006}
\DeclareMathSymbol{\bsfUpsilon}{0}{bsfletters}{'007}
\DeclareMathSymbol{\ssfUpsilon}{0}{ssfletters}{'007}
\DeclareMathSymbol{\bsfPhi}{0}{bsfletters}{'010}
\DeclareMathSymbol{\ssfPhi}{0}{ssfletters}{'010}
\DeclareMathSymbol{\bsfPsi}{0}{bsfletters}{'011}
\DeclareMathSymbol{\ssfPsi}{0}{ssfletters}{'011}
\DeclareMathSymbol{\bsfOmega}{0}{bsfletters}{'012}
\DeclareMathSymbol{\ssfOmega}{0}{ssfletters}{'012}
\newcommand{\tilb}{\widetilde{b}}
\newcommand{\tilub}{\underline{\tilb}}
\newcommand{\tilC}{\widetilde{C}}
\newcommand{\tilbe}{\widetilde{\be}}
\newcommand{\tilf}{\widetilde{f}}
\newcommand{\tilbg}{\widetilde{\bg}}
\newcommand{\tiln}{\widetilde{n}}
\newcommand{\hatbs}{\widehat{\bs}}
\newcommand{\tilbs}{\widetilde{\bs}}
\newcommand{\tilbv}{\widetilde{\bv}}
\newcommand{\tilx}{\widetilde{x}}
\newcommand{\tilbx}{\widetilde{\bx}}
\newcommand{\bari}{\overline{i}}
\newcommand{\barL}{\overline{L}}
\newcommand{\barmu}{\overline{\mu}}
\newcommand{\barrho}{\overline{\rho}}
\newcommand{\bdelta}{\bm{\delta}}
\newcommand{\tgamma}{\widetilde{\gamma}}
\newcommand{\tGamma}{\widetilde{\Gamma}}
\newcommand{\tkappa}{\widetilde{\kappa}}
\newcommand{\trho}{\widetilde{\rho}}
\newcommand{\tbdelta}{\widetilde{\bdelta}}
\newcommand{\ui}{\underline{i}}
\newcommand{\iid}{i.i.d.\ }
\newcommand{\floor}[1]{\lfloor{#1}\rfloor}
\newcommand{\lrangle}[2]{\left\langle{#1},{#2}\right\rangle}
\newcommand{\lea}{\stackrel{\rm(a)}{\le}}
\newcommand{\leb}{\stackrel{\rm(b)}{\le}}
\newcommand{\lec}{\stackrel{\rm(c)}{\le}}
\newcommand{\led}{\stackrel{\rm(d)}{\le}}
\DeclareMathOperator{\tr}{tr}
\DeclareMathOperator{\supp}{supp}
\newtheorem{theorem}{Theorem} 
\newtheorem*{theorem*}{Theorem}
\newtheorem{lemma}{Lemma}
\newtheorem{prop}{Proposition}
\newtheorem{corollary}{Corollary}
\newtheorem{assump}{Assumption}
\theoremstyle{definition}
\newtheorem{definition}{Definition} 
\theoremstyle{remark}
\newtheorem{remark}{Remark}
\newcommand{\qednew}{\nobreak \ifvmode \relax \else
      \ifdim\lastskip<1.5em \hskip-\lastskip
      \hskip1.5em plus0em minus0.5em \fi \nobreak
      \vrule height0.75em width0.5em depth0.25em\fi}
\begin{document}

\title{Stochastic L-BFGS: Improved Convergence Rates and Practical Acceleration Strategies}

\author{Renbo~Zhao,~\IEEEmembership{Student~Member,}
		William~B.~Haskell, 
        and~Vincent~Y.~F.~Tan,~\IEEEmembership{Senior~Member}
        


\thanks{An extended abstract of this paper was accepted by UAI 2017~\cite{Zhao_17d}. 
R.~Zhao is with the Department of Electrical and Computer Engineering (ECE), the Department of Industrial and Systems Engineering (ISE) and the Department of Mathematics (Math), National University of Singapore (NUS). W.~B.~Haskell is with the Department of ISE, NUS. V.~Y.~F.~Tan is with the Department ECE and the Department of Math, NUS. R. Zhao and V.~Y.~F.~Tan are supported in part by the NUS Young Investigator Award (grant number R-263-000-B37-133) and an MoE AcRF Tier 1 Grant (R-263-000-C12-112).  W.~B.~Haskell is supported by an MOE Tier I Grant (R-266-000-104-112). }
}


\maketitle

\begin{abstract}
We revisit the stochastic limited-memory BFGS (L-BFGS) algorithm. By proposing a new coordinate transformation framework for the convergence analysis, we prove improved  convergence rates and computational complexities of the stochastic L-BFGS algorithms compared to previous works. 
 In addition, we propose several practical acceleration strategies to speed up the empirical performance of such algorithms. We also provide theoretical analyses for most of the strategies. Experiments on {large-scale} logistic and ridge regression problems demonstrate that our proposed strategies yield significant improvements vis-\`a-vis  {competing  state-of-the-art algorithms}. 
\end{abstract}

\begin{IEEEkeywords}
Stochastic optimization, L-BFGS algorithm, Large-scale data, Linear Convergence, Acceleration strategies
\end{IEEEkeywords}

\section{Introduction}\label{sec:intro}

We are interested in the following (unconstrained) convex finite-sum minimization problem 
\begin{equation}
\min_{\bx\in\bbR^d} \left[f(\bx)\defeq\frac{1}{n}\sum_{i=1}^n f_i(\bx)\right], \label{eq:problem}
\end{equation}
where $d$ and $n$ denote the ambient dimension of the decision vector and the number of component functions respectively.
Problems in the form of \eqref{eq:problem} play important roles in machine learning and signal processing. 
One important class of such problems is the {\em empirical risk minimization} (ERM) problem, where each $f_i$ assumes the form 
\begin{equation}
f_i(\bx) \defeq \ell(\ba_i^T\bx,b_i)+\lambda R(\bx). \label{eq:ERM}
\end{equation}
In \eqref{eq:ERM}, $\ell:\bbR\times\bbR\to\bbR_+$ denotes a smooth loss function, $R:\bbR^d\to\bbR_+$ a smooth convex regularizer (e.g., Tikhonov), $\lambda\ge 0$ the regularization weight and $\{(\ba_i,b_i)\}_{i=1}^n\subseteq \bbR^{d+1}$ the set of feature-response pairs. Depending on the form of $\ell$ and $R$, many important machine learning problems---such as logistic regression, ridge regression and soft-margin support vector machines---are special cases of ERM.

We focus on the case where both $n$ and $d$ are large, and $f$ is ill-conditioned (i.e., the condition number of $f$ is large).\footnote{In this work, the condition number of a (strongly) convex function refers to that of its Hessian.} In the context of ERM, this means the  {dataset  $\{(\ba_i,b_i)\}_{i=1}^n$ that defines \eqref{eq:problem} is large and the feature vectors $\ba_i$ have high ambient dimension. However, the points} typically belong to a low-dimensional manifold. Such a setting is particularly relevant in the big-data era, due to unprecedented data acquisition abilities. 

When $n$ is large, the computational costs incurred by the batch optimization methods (both first- and second-order) are prohibitive, since in such methods the gradients of all the component functions $\{f_i\}_{i=1}^n$ need to be computed at each iteration. Therefore, stochastic (randomized) optimization  methods have become very popular. At each iteration, only a subset of component functions, rather than all of them, are processed. 
In this way, for a given time budget, much more progress can be made towards global optima compared to a single-step taken for batch methods.
When $d$ is large, Newton- or quasi-Newton-based methods ~
(both batch and stochastic)
 incur both high computational and storage complexities. 
Consequently only first-order and limited-memory quasi-Newton methods (e.g., L-BFGS~\cite{Liu_89}) are practical in this setting. 

\subsection{Related Works}\label{sec:lit}

When both $n$ and $d$ are large, as in our setting, most of research efforts have been devoted to {\em stochastic first-order methods}, which include stochastic gradient descent (SGD)~\cite{Bottou_98,Bottou_04} and its variance-reduced modifications~\cite{Johnson_13,Defazio_14,Reza_15,Schmidt_17}. However, these methods do not make use of the curvature information. This limits their {abilities} to find highly accurate solutions for ill-conditioned problems. 
In order to incorporate the curvature information in the limited-memory setting, recently much progress have been  made toward developing {\em stochastic L-BFGS} algorithm. A partial list of such works includes~\cite{Schra_07,Bordes_09,Sohl_14,Mokh_14,Mokh_15,Mokh_17,Byrd_16a,Moritz_16,Gower_16}. 
In particular, the first convergent 
algorithm was proposed by Mokhtari and Ribeiro~\cite{Mokh_14}.  
Later, the algorithm in~\cite{Byrd_16a} makes use of the subsampled Hessian-vector products to form the correction pairs (as opposed to using difference of stochastic gradients) and achieves better results than previous methods. However, the convergence rate is sublinear (in the strongly-convex case), similar to that of SGD. Later, Moritz~{\em et al}.~\cite{Moritz_16} combines this method with {\em stochastic variance-reduced gradient} (SVRG) and proves linear convergence of the resulting algorithm. The algorithm in~\cite{Gower_16} maintains the structure of this algorithm but incorporates the {\em block BFGS update}  to collect more curvature information in the optimization process. Although the convergence rate of this new method is similar to  that in \cite{Moritz_16}, experimental results  demonstrate practical  speedups introduced by the block BFGS update. Finally, there also exist a large volume of works on decentralized 
second-order methods~\cite{Wei_13,Bert_15,Mokh_16,Mokh_16b,Mokh_17b,Eisen_16,Eisen_17} that aim to coordinate multiple distributed agents (with computational and storage abilities) in the optimization task. 
Since we are not concerned with decentralized optimization algorithms in this paper, we do not discuss these works in details here.

\subsection{Motivations and Main Contributions}\label{sec:contributions}
Our work can be motivated from both {\em theory} and {\em practice}. In terms of theory, although linear convergence (in expectation) has been shown for both algorithms in \cite{Moritz_16} and \cite{Gower_16}, the convergence rates (and hence computational complexities) therein can be potentially further improved. (The analysis method in \cite{Gower_16} mainly follows that in \cite{Moritz_16}, so we treat the analyses in both works in a unified manner.) 
In addition, these results may be strengthened in a probabilistic sense, e.g., from convergence in expectation to convergence in probability or almost surely. 
In terms of practice, in addition to block BFGS update, there may exist several other practical strategies that can potentially further accelerate\footnote{In this work, we refer ``acceleration'' to general strategies that speed up the algorithm, not necessarily the ones based on momentum methods.} the algorithm in \cite{Moritz_16}. 
Based on these two aspects, our work consists of the following main contributions.

1) We propose a {\em coordinate transformation framework} to analyze   stochastic L-BFGS-type algorithms in~\cite{Moritz_16} and \cite{Gower_16}. Our analysis framework yields {a} much improved (linear) convergence rate (both in expectation and almost surely) and computational complexity. 
The essential idea of our method is to unify the analysis of stochastic first-order and second-order methods; as a result, it {\em opens new avenues of designing and analyzing} other variants of  stochastic second-order algorithms based on their first-order counterparts. 

%

2) We conduct a {\em computational complexity analysis} for the stochastic L-BFGS algorithms, which is the first of its kind. 

3) We propose {\em several practical acceleration strategies} to speed up the convergence of the stochastic L-BFGS algorithm in~\cite{Moritz_16}. 
We demonstrate the efficacy of our strategies through numerical experiments on logistic and ridge regression problems. We also prove linear convergence for most of these strategies. 

\section{Preliminaries}

\subsection{Notations}
We use lowercase, bold lowercase and bold uppercase letters to denote scalars, vectors and matrices respectively. 
For a matrix $\bU\in\bbR^{m_1\times m_0}$, we denotes its $(p,q)$-th entry as $u_{pq}$. For a function $f:\bbR^{m_1}\to\bbR^{m_2}$, define  the function $f\circ\bU$ as the composition $(f\circ\bU)(\bz)\defeq f(\bU\bz)$, for any $\bz\in\bbR^{m_0}$. A continuously differentiable function $g:\bbR^d\to\bbR$ is $L$-smooth ($L\!>\!0$) if and only if $\nabla g$ is $L$-Lipschitz on $\bbR^d$.   
We use $\bbN$ to denote the set of natural numbers. For any $n\in\bbN$, we define $[n]\defeq\{1,\ldots,n\}$ and $(n]\defeq\{0,1,\ldots,n\}$. Accordingly, for a sequence of sets $\{\calA_n\}_{n\ge 0}$, define $\calA_{(n]}\defeq\{\calA_0,\ldots,\calA_n\}$, for any $n\in\bbN$. 
As usual, $\liminf_{n\to\infty}\calA_n\defeq \cup_{n\ge 0}\cap_{j\ge n}\calA_j$ and $\limsup_{n\to\infty}\calA_n\defeq \cap_{n\ge 0}\cup_{j\ge n}\calA_j$. For a set $\calA$, denote its complement as $\calA^\rmc$. 
For any sequence $\{x_i\}_{i\ge 0}$, we define $\sum_{i=p}^{q} x_i \defeq 0$ if $p>q$.  
We use $\norm{\cdot}$ to denote both the $\ell_2$ norm of a vector and the spectral norm of a matrix. 
We use $\bB$ and $\bH$ (with subscripts and superscripts) to denote the approximate Hessian and approximate inverse Hessian in L-BFGS algorithms respectively, following the convention in~\cite{Nocedal_06}. 
$\bH$ is also known as the {\em metric matrix}~\cite{Gold_70}. 
In this work, technical lemmas (whose indices begin with `T') will appear in Appendix~\ref{app:tech_lemma}.  

\subsection{Assumptions on Component Functions $f_i$}
\begin{assump}\label{assump:twice_diff}
For each $i\in[n]$, $f_i$ is convex and twice differentiable on $\bbR^d$. For ERM problems \eqref{eq:ERM}, we assume these two properties are satisfied by the loss function $\ell$ in its first argument on $\bbR$ and by the regularizer $R$ on $\bbR^d$. 
\end{assump}

\begin{assump}\label{assump:sc_sm}
For each $i\in[n]$, $f_i$ is $\mu_i$-strongly convex and $L_i$-smooth on $\bbR^d$, where $0<\mu_i\le L_i$. 
\end{assump}

\begin{remark}
Assumptions~\ref{assump:twice_diff} and \ref{assump:sc_sm} {are standard}   in the analysis of both deterministic and stochastic  second-order optimization methods. 
 The strong convexity of $f_i$ in Assumption~\ref{assump:sc_sm} ensures {\em positive} curvature at any point in $\bbR^d$, which in turn guarantees the well-definedness of the BFGS update. As a common practice in the literature~\cite{Byrd_16a,Moritz_16}, this condition can typically be enforced by adding a strongly convex regularizer (e.g., Tikhonov) to $f_i$. 
Due to the strong convexity, \eqref{eq:problem} has a unique solution, denoted as $\bx^*$.
\end{remark}

\section{Algorithm}\label{sec:algo}



We will provide a refined analysis of the optimization algorithm (with some modifications) suggested in~\cite{Moritz_16} and so we recapitulate  it in 
Algorithm~\ref{algo:SLBFGS-nonuniform}. 
This algorithm can be regarded as a judicious combination of SVRG and L-BFGS algorithms. 
 We use $s$  and $t$ to denote  the outer and inner iteration indices respectively and $r$ to denote the index of {metric matrices} $\{\bH_r\}_{r\ge 0}$. 
We also use $\bx_{s,t}$ and $\bx^s$ to denote an inner iterate and outer iterate respectively.   

Each outer iteration $s$ consists of $m$ inner iterations. Before the inner iterations, we first compute a {\em full gradient} $\bg_s \defeq\nabla f(\bx^s)$.  In each inner iteration $(s,t)$, the only modification that we make with respect to (w.r.t.) the original algorithm in~\cite{Moritz_16} is that in computing the stochastic gradient $\bv_{s,t}$, the index set $\calB_{s,t}\subseteq [n]$ of size $b$ is sampled with replacement {\em nonuniformly}~\cite{Xiao_14,Zhao_15}. Specifically, the elements in $\calB_{s,t}$ are sampled \iid from a discrete distribution $P\defeq(p_1,\ldots,p_n)$, such that for any $i\in[n]$, $p_i={L_i}/{\sum_{j=1}^nL_j}$. As will be seen in Lemma~\ref{lem:bound_var}, compared to uniform sampling, nonuniform sampling leads to a better variance bound on the stochastic gradient $\bv_{s,t}$. 
Using $\calB_{s,t}$ and $\nabla f(\bx^s)$, we then compute $\bv_{s,t}$ according to \eqref{eq:var_reduced_grad} in Algorithm~\ref{algo:SLBFGS-nonuniform},
where 
\begin{equation}
\nabla f_{\calB_{s,t}}(\bx) \defeq \frac{1}{b}\sum_{i\in\calB_{s,t}} \frac{1}{np_i}\nabla f_i(\bx), \,\forall\,\bx\in\bbR^d. \label{eq:mb_gradient} 
\end{equation}
This specific way to construct $\bv_{s,t}$ reduces the variance of $\bv_{s,t}$ to zero as $s\to\infty$ (see Lemma~\ref{lem:bound_var}), and serves as a crucial step in the SVRG framework. 
 
Then we compute the search direction $\bH_r\bv_{s,t}$. 
The metric matrix $\bH_r$ serves as an approximate of the inverse Hessian matrix and therefore contains the local curvature information at the recent iterates. Consequently, $\bH_r\bv_{s,t}$ may act as a better descent direction than $\bv_{s,t}$. Since storing $\bH_r$ may incur high storage cost (indeed, $\Theta(d^2)$ space) for large $d$, (stochastic) L-BFGS-type methods compute $\bH_r\bv_{s,t}$ each time from a set of recent {\em correction pairs} $\calH_r$ (that only occupies $\Theta(d)$ space) and $\bv_{s,t}$. 
In this way, the limitation on memory can be overcome. 

 Denote $M\in\bbN$  as the {memory parameter}. We next describe the construction of the set of recent correction pairs $\calH_r\defeq \{(\bs_j,\by_j)\}_{j=r-M'+1}^r$, where $M'\!\defeq\!\min\{r,M\}$. Together with $\bv_{s,t}$, this set 
  will be used to compute the matrix-vector product $\bH_r\bv_{s,t}$.  
Before doing so, in line~\ref{line:barx}, we first compute the averaged past iterates $\{\barbx_r\}_{r\ge 0}$ from $\{\bx_{s,t}\}_{s\ge 0, t\in(m]}$ for every $\Upsilon$ inner iterates, where $\Upsilon\!\in\![m]$. 
Based on $\barbx_{r-1}$ and $\barbx_r$, we compute the most recent {correction pair}  $(\bs_r,\by_r)$ in line~\ref{line:correction_pair}. Following \cite{Byrd_16a}, in computing $\by_r$,  we first sample an index set $\calT_r\subseteq[n]$ of size $b_\rmH$ uniformly without replacement, and then let $\by_r = \nabla^2 f_{\calT_r} (\barbx_r)\bs_r$, where 
\begin{equation}
\nabla^2 f_{\calT_r} (\barbx_r)\defeq \frac{1}{b_\rmH}\sum_{i\in\calT_r} \nabla^2 f_{i} (\barbx_r),\label{eq:subsampHessian}
\end{equation}
denotes the sub-sampled Hessian at $\barbx_r$. Finally, we update  $\calH_{r-1}$ to $\calH_r$ by inserting $(\bs_r,\by_r)$ into $\calH_{r-1}$ and deleting $(\bs_{r-M'},\by_{r-M'})$ from it. 


Based on $\calH_r$, a direct approach to compute $\bH_r\bv_{s,t}$ would be computing $\bH_r$ first and then forming the product with $\bv_{s,t}$. Computing $\bH_r$ involves applying $M'$ BFGS updates to 
\begin{equation}
\bH_r^{(r-M')}\defeq \frac{\bs_r^T\by_r}{\norm{\by_r}^2}\bI \label{eq:def_H0}
\end{equation}
using $\{(\bs_j,\by_j)\}_{j=r-M'+1}^r$. For each $k\in\{r-M'+1,\ldots,r\}$, the update is
\begin{equation}
\bH^{(k)}_r = \left(\bI-\frac{\by_k{\bs_k}^T}{{\by_k}^T\bs_k}\right)\bH^{(k-1)}_r\left(\bI-\frac{\bs_k{\by_k}^T}{{\by_k}^T\bs_k}\right) + \frac{\bs_k{\bs_k}^T}{{\by_k}^T\bs_k}.\label{eq:update_H}
\end{equation}
Finally we set $\bH_r = \bH_{r}^{(r)}$. 
Instead of using this direct approach, we adopt the two-loop recursion algorithm~\cite[Algorithm 7.4]{Nocedal_06} to compute $\bH_r\bv_{s,t}$ as a whole. 
This method serves the same purpose as the direct one, but, as we shall see, with  much reduced computation. 

At the end of each outer iteration $s$, the starting point of next outer iteration, $\bx^{s+1}$ is either uniformly sampled (option I) or averaged (option II) from all the past inner iterates $\{\bx_{s,t}\}_{t\in[m]}$. As shown in Theorem~\ref{thm:main}, these two options can be analyzed in a unified manner.

\begin{remark}\label{rmk:est_smooth_param}
Under many scenarios (e.g., the ridge  and logistic regression problems in Section~\ref{sec:experiments}), the smoothness parameters $\{L_i\}_{i=1}^n$ can be accurately estimated. (For ERM problems, these parameters are typically data-dependent.) If in some cases, accurate estimates of these parameters are not available, we can simply employ uniform sampling, which is a special case of our weighted sampling technique. 
\end{remark}

\begin{algorithm}[t]
\caption{Stochastic L-BFGS Algorithm with Nonuniform Mini-batch Sampling} \label{algo:SLBFGS-nonuniform}
\begin{algorithmic}[1]
\State {\bf Input}: Initial decision vector $\bx^0$,  mini-batch sizes $b$ and $b_\rmH$, parameters $m$, $M$ and $\Upsilon$, step-size $\eta$, termination threshold $\epsilon$
\State {\bf Initialize} $s:=0$, $r:=0$, $\barbx_0=\vecz$, $\bH_0:=\bI$, $\calH_0:=\emptyset$
\State {\bf Repeat} 
\State \quad Compute a full gradient $\bg_s \defeq\nabla f(\bx^s)$
\State \quad $\bx_{s,0}:=\bx^s$
\State \quad {\bf for} $t=0,1,\ldots,m-1$
\State \quad\quad Sample a set $\calB_{s,t}$ with size $b$ 
\State \quad\quad Compute a variance-reduced gradient 
\begin{equation}
\bv_{s,t}:=\nabla f_{\calB_{s,t}}(\bx_{s,t}) - \nabla f_{\calB_{s,t}}(\bx^{s})+ \bg_s\label{eq:var_reduced_grad}
\end{equation}
\State \quad \quad Compute $\bH_r\bv_{s,t}$ from $\calH_r$ and $\bv_{s,t}$\label{line:two_loop} 
\State \quad\quad $\bx_{s,t+1}:= \bx_{s,t}-\eta\bH_r\bv_{s,t}$ \label{line:iteration}
\State \quad \quad {\bf if} $sm+t>0$ {\bf and} $(sm+t)\equiv0\mod \Upsilon$ 
\State \quad \quad\quad $r:=r+1$
\State \quad \quad\quad $\barbx_r:=\frac{1}{\Upsilon}\Big(\sum_{l=\max\{0,t-\Upsilon+1\}}^t \bx_{s,l}$\label{line:barx}
\Statex \hspace{3cm} $+\sum_{l=\min\{m+1,t-\Upsilon+m+2\}}^m \bx_{s-1,l}\Big)$ 
\State \quad \quad\quad Sample a set $\calT_r$ with size $b_\rmH$ 
\State \quad \quad\quad $\bs_r:=\barbx_{r}-\barbx_{r-1}$, $\by_r := \nabla^2 f_{\calT_r}(\barbx_r)\bs_r$\label{line:correction_pair}
\State \quad \quad\quad Update $\calH_r:= \{(\bs_j,\by_j)\}_{j=r-M'+1}^r$
\State \quad\quad {\bf end if}
\State \quad {\bf end for}
\State \quad {\bf Option I}: Sample $\tau_s$ uniformly randomly from $[m]$ and set $\bx^{s+1}:=\bx_{s,\tau_s}$\label{line:sample_tau_s}   
\State \quad {\bf Option II}: $\bx^{s+1}:= \frac{1}{m}\sum_{t=1}^m\bx_{s,t}$ \label{line:uniform_ave}
\State \quad $s:=s+1$
\State {\bf Until} $\abs{f(\bx^s)-f(\bx^{s-1})} < \epsilon$ 
\State {\bf Output}: $\bx^s$
\end{algorithmic}
\end{algorithm}

\section{Convergence Analysis}\label{sec:conv_analysis}

\subsection{Definitions}\label{sec:def}

Let $\{\ui_j\}_{j\in[n]}$ and $\{\bari_j\}_{j\in[n]}$ be permutations of $[n]$ such that $\mu_{\min}\defeq\mu_{\ui_1}\le \cdots\le \mu_{\ui_n}$ and $L_{\bari_1}\le \cdots\le L_{\bari_n} \defeq L_{\max}$. Given any $\tiln\in[n]$, define 
\begin{align}
\barmu_{\tiln}\defeq \frac{1}{\tiln}\sum_{j=1}^{\tiln} \mu_{\ui_j} \quad\mbox{and}\quad \barL_{\tiln}\defeq \frac{1}{\tiln}\sum_{j=n-\tiln+1}^n L_{\bari_j}. \label{eq:def_mu_L}
\end{align}
Accordingly, define  
\begin{equation}
\kappa_{\max}\defeq \frac{L_{\max}}{\mu_{\min}}\quad\mbox{and}\quad \kappa_{\tiln}\defeq \frac{\barL_{\tiln}}{\barmu_{\tiln}}. \label{eq:def_kappa} 
\end{equation}
In particular, define $\barmu\defeq \barmu_n$, $\barL\defeq \barL_n$ and $\kappa\defeq\barL/\barmu$. 
Denote the probability space on which the sequence of (random) iterates $\{\bx_{s,t}\}_{s\ge 0,t\in(m]}$ in Algorithm~\ref{algo:SLBFGS-nonuniform} is defined as $(\Omega,\Sigma,\bbP)$, where $\Sigma$ is the Borel $\sigma$-algebra of $\Omega$.  
We also define a filtration $\{\calF_{s,t}\}_{s\ge 0,t\in(m-1]}$ such that $\calF_{s,t}$ contains all the information up to the time $(s,t)$. Formally, $\calF_{s,t}\defeq \sigma(\{\tau_j\}_{j=0}^{s-1}\cup\{\calB_{i,j}\}_{i\in (s-1],j\in(m-1]}\cup\{\calB_{s,j}\}_{j=0}^{t-1}\cup\{\calT_j\}_{j=0}^{\floor{(sm+t)/L}})$, 
where $\sigma\left(\{x_j\}_{j=1}^n\right)$ denotes the $\sigma$-algebra generated by random variables $\{x_j\}_{j=1}^n$. 
Define $\calF_s\defeq \calF_{s,0}$. 

To introduce our coordinate transformation framework, we define some transforms of variables appearing in Algorithm~\ref{algo:SLBFGS-nonuniform}. Specifically, for any $s,t,r\ge 0$, 
define $\tilbx_{s,t,r}\defeq \bH_r^{-1/2}\bx_{s,t}$, $\tilbx^{s,r}\defeq \bH_r^{-1/2}\bx^{s}$, $\tilbx_r^*\defeq \bH_r^{-1/2}\bx^*$ and $\tilbv_{s,t,r}\defeq \bH_r^{1/2}\bv_{s,t}$.\footnote{As will be shown in Lemma~\ref{lem:spectral_bound}, for any $r\ge 0$, $\bH_r\succeq\gamma\bI$ for some $\gamma>0$. Therefore, $\bH_r^{1/2}$ (and $\bH_r^{-1/2}$) are well-defined.} We also define transformed functions $\tilf_{i,r}\defeq f_i\circ\bH_r^{1/2}$ and $\tilf_r\defeq \frac{1}{n}\sum_{i=1}^n \tilf_{i,r}$, for any $i\in[n]$ and $r\ge 0$. 

To state our convergence results, we define the notions of linear convergence and R-linear convergence. 

\begin{definition}\label{def:linear_conv}
A sequence $\{\bx_n\}_{n\ge 0}\!\subseteq\!\bbR^d$ is said to converge to $\barbx\!\in\!\bbR^d$ {\em linearly} (or more precisely, {\em Q-linearly}) with rate $\iota\in(0,1)$ if 
\begin{equation}
\limsup_{n\to\infty} \frac{\normt{\bx_{n+1}-\barbx}}{\normt{\bx_{n}-\barbx}} \le \iota. \label{eqn:qlinear}
\end{equation}
We say $\bx_n\to\barbx$ {\em R-linearly} with rate $\iota'\in(0,1)$ if there exists a nonnegative sequence $\{\varepsilon_n\}_{n\ge 0}$ such that $\normt{\bx_{n}-\barbx}\le \varepsilon_n$ for sufficiently large $n$ and  $\varepsilon_n\to 0$ linearly with rate $\iota'$. 
\end{definition}

\subsection{Preliminary Lemmas}\label{sec:prelim}

From the definitions of transformed variables and functions in Section~\ref{sec:def}, we immediately have the following  lemmas. 

\begin{lemma}\label{lem:tilf_equiv}
For any $s,t,r\ge 0$ and $i\in[n]$, we have
\begin{align}
\tilf_{i,r}(\tilbx_{s,t,r})&=f_i(\bx_{s,t}),\label{eq:tilf}\\
\nabla \tilf_{i,r}(\tilbx_{s,t,r})&= \bH_r^{1/2} \nabla f_i(\bx_{s,t}),\label{eq:grad_tilf}\\
\nabla^2 \tilf_{i,r}(\tilbx_{s,t,r})&= \bH_r^{1/2} \nabla^2 f_i(\bx_{s,t})\bH_r^{1/2}.\label{eq:Hessian_tilf}
\end{align}
\end{lemma}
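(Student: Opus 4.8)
The plan is to treat all three identities as direct consequences of the chain rule applied to the composition $\tilf_{i,r}=f_i\circ\bH_r^{1/2}$, exploiting two structural facts about $\bH_r^{1/2}$. First, since (by Lemma~\ref{lem:spectral_bound}) $\bH_r\succeq\gamma\bI$ for some $\gamma>0$, the matrix $\bH_r$ is symmetric positive definite, so its principal square root $\bH_r^{1/2}$ exists, is symmetric (that is, $(\bH_r^{1/2})^T=\bH_r^{1/2}$), and is invertible with inverse $\bH_r^{-1/2}$. Second, the defining relation gives $\bH_r^{1/2}\tilbx_{s,t,r}=\bH_r^{1/2}\bH_r^{-1/2}\bx_{s,t}=\bx_{s,t}$, which is what converts ``transformed'' arguments back into the original iterate at every order of differentiation.

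First I would establish the zeroth-order identity \eqref{eq:tilf} by unwinding definitions: $\tilf_{i,r}(\tilbx_{s,t,r})=f_i(\bH_r^{1/2}\tilbx_{s,t,r})=f_i(\bx_{s,t})$, using only the composition rule together with the relation above. Next, for \eqref{eq:grad_tilf} I would invoke the standard chain rule for a linear change of variables: if $g=f\circ\bA$ then $\nabla g(\bz)=\bA^T\nabla f(\bA\bz)$. Taking $\bA=\bH_r^{1/2}$ and using its symmetry yields $\nabla\tilf_{i,r}(\bz)=\bH_r^{1/2}\nabla f_i(\bH_r^{1/2}\bz)$; evaluating at $\bz=\tilbx_{s,t,r}$ and again using $\bH_r^{1/2}\tilbx_{s,t,r}=\bx_{s,t}$ gives the claim. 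The Hessian identity \eqref{eq:Hessian_tilf} follows identically from the second-order chain rule $\nabla^2 g(\bz)=\bA^T\nabla^2 f(\bA\bz)\bA$ with the same symmetric $\bA=\bH_r^{1/2}$.

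There is no substantive obstacle here; the result is essentially bookkeeping. The only two points warranting explicit mention are the well-definedness and symmetry of $\bH_r^{1/2}$ (supplied by Lemma~\ref{lem:spectral_bound}) and the twice-differentiability of $f_i$ needed to legitimately apply the second-order chain rule (guaranteed by Assumption~\ref{assump:twice_diff}). Since differentiability of $f_i$ is preserved under composition with the linear map $\bH_r^{1/2}$, all the gradients and Hessians above exist in the classical sense, and the identities hold pointwise for every $s,t,r\ge 0$ and $i\in[n]$.
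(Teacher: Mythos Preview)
Your proposal is correct and matches the paper's approach: the paper simply states that this lemma follows ``immediately'' from the definitions of the transformed variables and functions, and your argument is precisely the explicit unpacking of that claim via the chain rule for a linear change of variables. There is nothing to add.
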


\begin{lemma}\label{lem:tilf_sc_sm}
If there exist $0<\gamma'\le \Gamma'$ such that $\gamma'\bI\preceq\bH_r\preceq\Gamma'\bI$ for all $r\ge 0$, then for any $i\in[n]$ and $r\ge 0$, $\tilf_{i,r}$ is twice differentiable, $(\mu_i\gamma')$-strongly convex and $(L_i\Gamma')$-smooth on $\bbR^d$. Consequently, $\tilf_r$ is twice differentiable, $(\gamma'\barmu)$-strongly convex and $(\Gamma'\barL)$-smooth on $\bbR^d$.
\end{lemma}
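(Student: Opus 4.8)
The plan is to reduce both claims to pointwise spectral bounds on the Hessians, exploiting the fact that for each fixed $r$, $\bH_r^{1/2}$ is a fixed symmetric positive definite matrix and that congruence transformations preserve the positive semidefinite (L\"owner) order. First I would record twice differentiability: since $\tilf_{i,r}=f_i\circ\bH_r^{1/2}$ is the composition of the twice-differentiable $f_i$ (Assumption~\ref{assump:twice_diff}) with a linear map, it is itself twice differentiable, and $\tilf_r$ inherits this as a finite average. The Hessian is then given by the chain rule — equivalently, by extending \eqref{eq:Hessian_tilf} from the particular iterate to an arbitrary point — namely $\nabla^2\tilf_{i,r}(\bz)=\bH_r^{1/2}\nabla^2 f_i(\bH_r^{1/2}\bz)\bH_r^{1/2}$ for every $\bz\in\bbR^d$.

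Next I would invoke Assumption~\ref{assump:sc_sm}, which (together with twice differentiability) is equivalent to the Hessian sandwich $\mu_i\bI\preceq\nabla^2 f_i(\bx)\preceq L_i\bI$ for all $\bx$. Applying the congruence $\bA\mapsto\bH_r^{1/2}\bA\bH_r^{1/2}$, which is order-preserving on the positive semidefinite cone, to this inequality and using $\bH_r^{1/2}\bI\bH_r^{1/2}=\bH_r$ yields $\mu_i\bH_r\preceq\nabla^2\tilf_{i,r}(\bz)\preceq L_i\bH_r$. Finally, the hypothesis $\gamma'\bI\preceq\bH_r\preceq\Gamma'\bI$ sandwiches $\bH_r$ itself, giving $(\mu_i\gamma')\bI\preceq\nabla^2\tilf_{i,r}(\bz)\preceq(L_i\Gamma')\bI$; since this holds at every $\bz$, $\tilf_{i,r}$ is $(\mu_i\gamma')$-strongly convex and $(L_i\Gamma')$-smooth.

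For the statement about $\tilf_r$, I would simply average the per-component bounds. From $\nabla^2\tilf_r(\bz)=\frac{1}{n}\sum_{i=1}^n\nabla^2\tilf_{i,r}(\bz)$, summing the two-sided bounds gives $\big(\frac{\gamma'}{n}\sum_{i=1}^n\mu_i\big)\bI\preceq\nabla^2\tilf_r(\bz)\preceq\big(\frac{\Gamma'}{n}\sum_{i=1}^n L_i\big)\bI$. Recognizing $\frac{1}{n}\sum_{i=1}^n\mu_i=\barmu$ and $\frac{1}{n}\sum_{i=1}^n L_i=\barL$ from \eqref{eq:def_mu_L} (the permutations $\{\ui_j\}$ and $\{\bari_j\}$ merely reorder the index set $[n]$) delivers $(\gamma'\barmu)\bI\preceq\nabla^2\tilf_r(\bz)\preceq(\Gamma'\barL)\bI$ and closes the argument.

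As for obstacles: there is essentially no deep difficulty here. The only points requiring care are (i) justifying that the pointwise Hessian characterization of strong convexity and smoothness applies, which needs exactly the twice differentiability established at the outset, and (ii) verifying that the congruence map preserves the L\"owner order, a standard fact obtained by writing $\bH_r^{1/2}(\bB-\bA)\bH_r^{1/2}=\bM^T(\bB-\bA)\bM$ with $\bM=\bH_r^{1/2}$ symmetric. The result is thus a direct consequence of Lemma~\ref{lem:tilf_equiv} together with the assumed spectral bounds on $\bH_r$.
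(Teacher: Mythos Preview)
Your proposal is correct and is precisely the natural argument the paper has in mind: the paper introduces Lemmas~\ref{lem:tilf_equiv} and~\ref{lem:tilf_sc_sm} with the phrase ``we immediately have the following lemmas'' and gives no further proof, so your Hessian-congruence argument is exactly the standard verification being tacitly invoked.
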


Next we derive two other lemmas that will not only be used in the analysis later, 
but have the potential to be applied to more general problem settings. Specifically, Lemma~\ref{lem:bound_var} can be applied to any stochastic optimization algorithms based on SVRG and Lemma~\ref{lem:spectral_bound} can be applied to any finite-sum minimization algorithms based on L-BFGS methods (not necessarily stochastic in nature). The proofs of {Lemmas}~\ref{lem:bound_var} and \ref{lem:spectral_bound} are deferred to Appendices~\ref{sec:proof_bound_var} and \ref{sec:proof_spec_bound} respectively. 

\begin{lemma}[Variance bound of $\bv_{s,t}$]\label{lem:bound_var}
In Algorithm~\ref{algo:SLBFGS-nonuniform}, we have $\bbE_{\calB_{s,t}}\left[\bv_{s,t}\vert\calF_{s,t}\right] = \nabla f(\bx_{s,t})$ and 
\begin{align}
&\bbE_{\calB_{s,t}}\left[\norm{\bv_{s,t}-\nabla f(\bx_{s,t})}^2\vert\calF_{s,t}\right]\nn\\
&\quad\quad\le \frac{4\barL}{b} \left(f(\bx_{s,t})-f(\bx^*)+f(\bx^s)-f(\bx^*)\right).\label{eq:bound_var}
\end{align}
\end{lemma}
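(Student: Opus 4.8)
The plan is to establish unbiasedness by a direct computation with the importance weights, then to bound the conditional variance by reducing it to the second moment of a single importance-weighted gradient difference, which is in turn controlled by a co-coercivity argument. For the mean, note that conditioned on $\calF_{s,t}$ the iterates $\bx_{s,t}$ and $\bx^s$ are fixed and each index in $\calB_{s,t}$ is drawn according to $P$; the weighting $1/(np_i)$ in \eqref{eq:mb_gradient} is chosen precisely so that $\bbE_{\calB_{s,t}}[\nabla f_{\calB_{s,t}}(\bx)\vert\calF_{s,t}] = \sum_{i=1}^n p_i\,(np_i)^{-1}\nabla f_i(\bx) = \nabla f(\bx)$ for any fixed $\bx$. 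Applying this to both $\bx_{s,t}$ and $\bx^s$ in \eqref{eq:var_reduced_grad} gives $\bbE_{\calB_{s,t}}[\bv_{s,t}\vert\calF_{s,t}] = \nabla f(\bx_{s,t})$.

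For the variance, since $\calB_{s,t}$ consists of $b$ i.i.d.\ draws, the deviation $\bv_{s,t}-\nabla f(\bx_{s,t})$ is an average of $b$ i.i.d.\ zero-mean summands; independence removes the cross terms and leaves a factor $1/b$ times the variance of a single summand, which I bound by its (uncentered) second moment. This reduces the problem to controlling $\bbE[\normt{(np_i)^{-1}(\nabla f_i(\bx_{s,t})-\nabla f_i(\bx^s))}^2]$ for one draw $i\sim P$. Substituting $p_i = L_i/(n\barL)$ turns the weight $1/(n^2 p_i)$ into $\barL/(nL_i)$, so this second moment equals $\frac{\barL}{n}\sum_{i=1}^n \frac{1}{L_i}\normt{\nabla f_i(\bx_{s,t})-\nabla f_i(\bx^s)}^2$; this is where nonuniform sampling pays off, replacing a worst-case $L_{\max}$ weight by the average $\barL$.

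The crux is to bound $\frac{1}{L_i}\normt{\nabla f_i(\bx_{s,t})-\nabla f_i(\bx^s)}^2$ by function-value gaps. I would split it via $\normt{\ba-\bb}^2\le 2\normt{\ba}^2+2\normt{\bb}^2$ centered at $\nabla f_i(\bx^*)$ and invoke the standard smoothness-plus-convexity inequality $\frac{1}{2L_i}\normt{\nabla f_i(\bx)-\nabla f_i(\bx^*)}^2 \le f_i(\bx)-f_i(\bx^*)-\lranglet{\nabla f_i(\bx^*)}{\bx-\bx^*}$ at $\bx=\bx_{s,t}$ and $\bx=\bx^s$. The main obstacle, and the only delicate point, is that $\bx^*$ minimizes $f$ but not the individual $f_i$, so the linear correction terms $\lranglet{\nabla f_i(\bx^*)}{\cdot}$ do not vanish componentwise; they cancel only \emph{after} averaging over $i$, because $\frac1n\sum_{i=1}^n\nabla f_i(\bx^*)=\nabla f(\bx^*)=\vecz$. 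Performing this averaging yields $\frac1n\sum_{i=1}^n\frac{1}{L_i}\normt{\nabla f_i(\bx_{s,t})-\nabla f_i(\bx^s)}^2 \le 4(f(\bx_{s,t})-f(\bx^*))+4(f(\bx^s)-f(\bx^*))$, and combining with the reduction above establishes \eqref{eq:bound_var}.
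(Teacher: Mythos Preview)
Your proof is correct and follows essentially the same route as the paper: reduce to a single summand via i.i.d.\ sampling, drop the centering, split at $\nabla f_i(\bx^*)$ using $\normt{\ba+\bb}^2\le 2\normt{\ba}^2+2\normt{\bb}^2$, and then apply the smoothness-plus-convexity inequality with the linear terms $\lranglet{\nabla f_i(\bx^*)}{\cdot}$ cancelling upon averaging. The only cosmetic difference is that the paper packages the last step as a citation to the Xiao--Zhang bound (their Lemma~\ref{lem:bound_nonuniform}, \eqref{eq:bound_norm_nonuni}), whereas you unpack it explicitly and correctly flag the averaging subtlety.
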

\begin{remark}
In previous works~\cite{Moritz_16,Gower_16}, a uniform mini-batch sampling of $\calB_{s,t}$ was employed, and different variance bounds of $\bv_{s,t}$ were derived.  In \cite[Lemma 6]{Moritz_16}, the bound was 
\begin{equation}
4L_{\max}\left(f(\bx_{s,t})-f(\bx^*)+f(\bx^s)-f(\bx^*)\right).\label{eq:bound_Moritz}
\end{equation}
In \cite[Lemma 2]{Gower_16}, this bound was slightly improved   to 
\begin{align}
&4L_{\max}((f(\bx_{s,t})-f(\bx^*))\nn\\
&\quad\quad\quad\quad+\left(1-{1}/{\kappa_{\max}}\right)(f(\bx^s)-f(\bx^*))).\label{eq:bound_Gower}
\end{align}
However, both of these bounds fail to capture the dependence on the mini-batch size $b$. 
In contrast, in this work we consider a nonuniform mini-batch sampling (with replacement). Due to division by $b$ and $\barL\le L_{\max}$ (indeed in many cases, $\barL\ll L_{\max}$), our bound in \eqref{eq:bound_var} is superior to \eqref{eq:bound_Moritz}  and \eqref{eq:bound_Gower}. 
As will be seen in Theorem~\ref{thm:main}, our better bound \eqref{eq:bound_var} leads to a faster (linear) convergence rate of Algorithm~\ref{algo:SLBFGS-nonuniform}.
\end{remark}

\begin{lemma}[Uniform Spectral Bound of $\{\bH_r\}_{r\ge 0}$]\label{lem:spectral_bound}
The spectra of $\{\bH_r\}_{r\ge 0}$ are uniformly bounded, i.e., for each $r\ge 0$, $\gamma\bI\preceq \bH_r \preceq \Gamma\bI$, where\footnote{We assume $\kappa_{b_\rmH}>1$  for any ${b_\rmH}\in[n]$ since we focus on the setting where $f$ is ill-conditioned, i.e., $\kappa_{b_\rmH}\ge \kappa\gg 1$.
If $\kappa_{b_\rmH}=1$ for some ${b_\rmH}\in[n]$, then $\Gamma=(\kappa_{b_\rmH}^M+M)/\barmu_{b_\rmH}$ and $\gamma$ remains the same. The proof for this case can be straightforwardly adapted from that in Section~\ref{sec:proof_spec_bound}.\label{ft:kappa_gg_1}} 
\begin{equation}
\gamma \defeq \frac{1}{(M+1)\barL_{b_\rmH}} \quad\mbox{and}\quad \Gamma \defeq \frac{\kappa_{b_\rmH}^{M+1}}{\barmu_{b_\rmH}(\kappa_{b_\rmH}-1)}.\label{eq:spec_bound}
\end{equation}
\end{lemma}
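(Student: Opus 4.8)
The plan is to reduce everything to uniform two-sided spectral control of the curvature pairs, and then push that control through the $M'$ BFGS updates that build $\bH_r$. The starting point is the observation that, because $\calT_r$ is a size-$b_\rmH$ subset, the subsampled Hessian $\nabla^2 f_{\calT_r}(\barbx_r)=\frac{1}{b_\rmH}\sum_{i\in\calT_r}\nabla^2 f_i(\barbx_r)$ satisfies, \emph{for every realization}, $\barmu_{b_\rmH}\bI\preceq\nabla^2 f_{\calT_r}(\barbx_r)\preceq\barL_{b_\rmH}\bI$. This is exactly where the extremal averages in \eqref{eq:def_mu_L} enter: $\barmu_{b_\rmH}$ is the smallest average of $b_\rmH$ of the $\mu_i$ and $\barL_{b_\rmH}$ the largest average of $b_\rmH$ of the $L_i$, so these are uniform bounds over all subsets. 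Writing $\by_r=\nabla^2 f_{\calT_r}(\barbx_r)\bs_r$ and using $\barmu_{b_\rmH}\bI\preceq\nabla^2 f_{\calT_r}\preceq\barL_{b_\rmH}\bI$, I would first record the secant estimates $\frac{\by_k^T\bs_k}{\norm{\bs_k}^2}\in[\barmu_{b_\rmH},\barL_{b_\rmH}]$, $\frac{\norm{\by_k}^2}{\by_k^T\bs_k}\in[\barmu_{b_\rmH},\barL_{b_\rmH}]$, and $\frac{\norm{\bs_k}^2}{\by_k^T\bs_k}\le\frac{1}{\barmu_{b_\rmH}}$, together with the initialization bound $\barmu_{b_\rmH}\le\norm{\bH_r^{(r-M')}}^{-1}=\frac{\norm{\by_r}^2}{\by_r^T\bs_r}\le\barL_{b_\rmH}$ from \eqref{eq:def_H0}. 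In particular $\by_k^T\bs_k>0$, which guarantees every update \eqref{eq:update_H} preserves positive definiteness, so the $\bH_r^{1/2}$ used in Section~\ref{sec:def} are well-defined.

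For the lower bound $\gamma$ I would pass to $\bB_r=\bH_r^{-1}$ and use the direct (dual) BFGS recursion $\bB^{(k)}_r=\bB^{(k-1)}_r-\frac{\bB^{(k-1)}_r\bs_k\bs_k^T\bB^{(k-1)}_r}{\bs_k^T\bB^{(k-1)}_r\bs_k}+\frac{\by_k\by_k^T}{\by_k^T\bs_k}$. The subtracted rank-one term is positive semidefinite, so it may be dropped to give $\bB^{(k)}_r\preceq\bB^{(k-1)}_r+\frac{\by_k\by_k^T}{\by_k^T\bs_k}\preceq\bB^{(k-1)}_r+\barL_{b_\rmH}\bI$, the last step using $\frac{\norm{\by_k}^2}{\by_k^T\bs_k}\le\barL_{b_\rmH}$. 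Telescoping the $M'\le M$ updates from $\bB^{(r-M')}_r\preceq\barL_{b_\rmH}\bI$ yields $\bB_r\preceq(M+1)\barL_{b_\rmH}\bI$, hence $\bH_r\succeq\frac{1}{(M+1)\barL_{b_\rmH}}\bI=\gamma\bI$. This direction is clean and, crucially, dimension-free.

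For the upper bound $\Gamma$ I would instead work with the inverse-Hessian recursion $\bH^{(k)}_r=V_k\bH^{(k-1)}_r V_k^T+\frac{\bs_k\bs_k^T}{\by_k^T\bs_k}$ with $V_k\defeq\bI-\frac{\by_k\bs_k^T}{\by_k^T\bs_k}$, and prove by induction on $k$ the bound $\norm{\bH^{(k)}_r}\le\frac{1}{\barmu_{b_\rmH}}\sum_{j=0}^{k-(r-M')}\kappa_{b_\rmH}^{\,j}$, which at $k=r$ gives $\norm{\bH_r}\le\frac{1}{\barmu_{b_\rmH}}\cdot\frac{\kappa_{b_\rmH}^{M'+1}-1}{\kappa_{b_\rmH}-1}\le\frac{\kappa_{b_\rmH}^{M+1}}{\barmu_{b_\rmH}(\kappa_{b_\rmH}-1)}=\Gamma$. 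The base case is the initialization bound $\norm{\bH^{(r-M')}_r}\le 1/\barmu_{b_\rmH}$. For the inductive step the additive term contributes $\frac{\norm{\bs_k}^2}{\by_k^T\bs_k}\le\frac{1}{\barmu_{b_\rmH}}$, while the multiplicative term needs $\norm{V_k\bH^{(k-1)}_r V_k^T}\le\kappa_{b_\rmH}\norm{\bH^{(k-1)}_r}$. Here the key exact fact is that $V_k$ kills $\by_k$ and acts as the identity on $\bs_k^\perp$, so a direct two-dimensional computation gives $\norm{V_k}=\frac{\norm{\bs_k}\norm{\by_k}}{\by_k^T\bs_k}\le\kappa_{b_\rmH}$, the last inequality from $\norm{\by_k}\le\barL_{b_\rmH}\norm{\bs_k}$ and $\by_k^T\bs_k\ge\barmu_{b_\rmH}\norm{\bs_k}^2$.

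The main obstacle is precisely this last step. The crude estimate $\norm{V_k\bH^{(k-1)}_r V_k^T}\le\norm{V_k}^2\norm{\bH^{(k-1)}_r}$ only yields a per-update factor $\kappa_{b_\rmH}^2$, which would give the far weaker exponent $\kappa_{b_\rmH}^{2M}$; obtaining the claimed single factor $\kappa_{b_\rmH}$ per update requires exploiting the rank-one, near-projection structure of $V_k$ (it amplifies only the one direction $\by_k$, on which $\bH^{(k-1)}_r$ cannot simultaneously be maximal after the previous update) rather than bounding operator norms multiplicatively. Controlling this compounding of the non-symmetric factors $V_{r-M'+1},\dots,V_r$ so that the exponent of $\kappa_{b_\rmH}$ stays $O(M)$ is the delicate part, and it is what makes the bound dimension-free. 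Finally I would dispose of the two edge cases separately: the early iterations where $M'=\min\{r,M\}<M$ (the geometric sum simply shortens), and the degenerate $\kappa_{b_\rmH}=1$ regime flagged in footnote~\ref{ft:kappa_gg_1}, where the geometric series collapses to $(\kappa_{b_\rmH}^M+M)/\barmu_{b_\rmH}$ while $\gamma$ is unchanged.
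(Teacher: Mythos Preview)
Your lower-bound argument for $\gamma$ is exactly the paper's: pass to $\bB_r$, drop the PSD rank-one subtraction, use $\norm{\by_k}^2/(\by_k^T\bs_k)\le\barL_{b_\rmH}$, and telescope. Nothing to add there.

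The gap is in the upper bound. Your exact formula $\norm{V_k}=\frac{\norm{\bs_k}\norm{\by_k}}{\by_k^T\bs_k}$ is correct, but your subsequent estimate $\norm{V_k}\le\kappa_{b_\rmH}$, obtained from $\norm{\by_k}\le\barL_{b_\rmH}\norm{\bs_k}$ and $\by_k^T\bs_k\ge\barmu_{b_\rmH}\norm{\bs_k}^2$, is loose by a square root. The sharp bound is $\norm{V_k}\le\kappa_{b_\rmH}^{1/2}$, and once you have it the ``crude'' submultiplicative step $\norm{V_k\bH_r^{(k-1)}V_k^T}\le\norm{V_k}^2\norm{\bH_r^{(k-1)}}\le\kappa_{b_\rmH}\norm{\bH_r^{(k-1)}}$ already gives the single factor $\kappa_{b_\rmH}$ per update. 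The whole ``obstacle'' paragraph about controlling the compounding of the non-symmetric $V_k$'s is therefore unnecessary.

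The missing idea, and this is precisely what the paper does, is to insert the square root of the subsampled Hessian. Writing $A\defeq\nabla^2 f_{\calT_r}(\barbx_r)$ and $\tilbs_k\defeq A^{1/2}\bs_k$, one checks directly that
\[
V_k=\bI-\frac{\by_k\bs_k^T}{\by_k^T\bs_k}=A^{1/2}\Big(\bI-\frac{\tilbs_k\tilbs_k^T}{\norm{\tilbs_k}^2}\Big)A^{-1/2},
\]
so $V_k$ is similar, via $A^{1/2}$, to an orthogonal projection. Hence $\norm{V_k}\le\norm{A^{1/2}}\cdot 1\cdot\norm{A^{-1/2}}\le\barL_{b_\rmH}^{1/2}\barmu_{b_\rmH}^{-1/2}=\kappa_{b_\rmH}^{1/2}$. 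Equivalently, in your exact formula, write $\by_k^T\bs_k=\norm{A^{1/2}\bs_k}^2$ and bound $\norm{\bs_k}\le\barmu_{b_\rmH}^{-1/2}\norm{A^{1/2}\bs_k}$, $\norm{\by_k}=\norm{A^{1/2}(A^{1/2}\bs_k)}\le\barL_{b_\rmH}^{1/2}\norm{A^{1/2}\bs_k}$; the $\norm{A^{1/2}\bs_k}^2$ cancels and you get $\kappa_{b_\rmH}^{1/2}$ directly. With this in hand, your inductive scheme yields $\norm{\bH_r}\le\frac{1}{\barmu_{b_\rmH}}\sum_{j=0}^{M'}\kappa_{b_\rmH}^{\,j}\le\Gamma$ exactly as stated.
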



\begin{remark}\label{rmk:spectral_bound}
In~\cite{Mokh_15,Byrd_16a,Moritz_16}, the authors make use of a classical technique in~\cite{Liu_89} to derive a different uniform spectral bound of $\{\bH_r\}_{r\ge 0}$. Their technique involves applying $\tr(\cdot)$ and $\det(\cdot)$ recursively to the BFGS update rule
\begin{align}
\bB_r^{(k)} = \bB_r^{(k-1)}-\frac{\bB_r^{(k-1)}\bs_k{\bs_k}^T\bB_r^{(k-1)}}{{\bs_k}^T\bB_r^{(k-1)}\bs_k}+\frac{\by_k\by_k^T}{\bs_k^T\by_k},\label{eq:update_B}
\end{align}
where $\bB_r^{(k)}\defeq (\bH_r^{(k)})^{-1}$ denotes the approximate Hessian matrix at step $k$ in the reconstruction of $\bB_r\defeq(\bH_r)^{-1}$. The lower and upper bounds derived by this technique are 
\begin{align}
\tgamma = \frac{1}{(d+M)L_{\max}}\;\,\mbox{and}\;\, 
\tGamma 
&= (d+M)^{d+M-1}\frac{\kappa_{\max}^{d+M-1}}{\mu_{\min}} \nn
\end{align}
respectively. As will be seen in Proposition~\ref{prop:comp}, the overall computational complexity of Algorithm~\ref{algo:SLBFGS-nonuniform} heavily depends on the estimated (uniform) condition number of $\{\bH_r\}_{r\ge 0}$. Therefore, it is instructive to compute this quantity for both $(\gamma,\Gamma)$ and $(\tgamma,\tGamma)$ as
\begin{align}
\kappa_\rmH &\defeq \frac{\Gamma}{\gamma} = (M+1)\frac{\kappa_{b_\rmH}^{M+2}}{\kappa_{b_\rmH}-1}\approx (M+1)\kappa_{b_\rmH}^{M+1},\label{eq:kappa_ours}\\
\tkappa_\rmH &\defeq \frac{\tGamma}{\tgamma} = (M+d)^{M+d}\kappa_{\max}^{M+d}, \label{eq:kappa_Moritz}
\end{align}
where the approximation in \eqref{eq:kappa_ours} follows from $\kappa_{b_\rmH}\gg 1$ (see Footnote~\ref{ft:kappa_gg_1}). By comparing \eqref{eq:kappa_ours} and \eqref{eq:kappa_Moritz}, we notice our estimate for the condition number of $\{\bH_r\}_{r\ge 0}$, namely $\kappa_\rmH$, is smaller than those in \cite{Byrd_16a} and \cite{Moritz_16}, namely $\tkappa_\rmH$, in several aspects. First, $\kappa_\rmH$ does not grow (exponentially) with the data dimension $d$. Second, $\kappa_\rmH$ depends on $\kappa_{b_\rmH}$, which is usually much smaller than $\kappa_{\max}$. Third, even if we set $d=1$ in \eqref{eq:kappa_Moritz}, the factor $M+1$ in \eqref{eq:kappa_ours} is much smaller than the factor $(M+1)^{M+1}$ in \eqref{eq:kappa_Moritz}. 
As a result, our improved estimate of the condition number of $\{\bH_r\}_{r\ge 0}$ will lead to a much better computational complexity estimate (see Proposition~\ref{prop:comp}). 
\end{remark}

\subsection{Main Results}

Our main convergence results consist of Theorem~\ref{thm:main} and Corollary~\ref{cor:as_conv}, which provide linear convergence guarantees of $f(\bx^s)$ to $f(\bx^*)$ in expectation and almost surely, respectively. 


\begin{theorem}\label{thm:main}
In Algorithm~\ref{algo:SLBFGS-nonuniform}, choose $\eta\!<\!\min\{b/12,1\}/(\Gamma\barL)$ and $m$ sufficiently large. With either option I or II, we have 
\begin{align}
\bbE &\left[f(\bx^s)-f(\bx^*)\right] \le \rho^s\left(f(\bx^0)-f(\bx^*)\right),\;\mbox{where}\label{eq:linear_conv}\\
\rho& = \frac{b}{\gamma\barmu m\eta(b-4\eta\Gamma\barL)}+\frac{4\eta\Gamma\barL}{b-4\eta\Gamma\barL}\left(1+\frac{1}{m}\right) < 1. \label{eq:rho}
\end{align}
\end{theorem}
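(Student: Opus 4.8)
The plan is to use the coordinate transformation framework to reduce the stochastic L-BFGS dynamics to a standard SVRG iteration on the transformed objective $\tilf_r$, and then to run a Johnson--Zhang-style analysis on $\tilf_r$. First I would observe that the update in line~\ref{line:iteration}, namely $\bx_{s,t+1}=\bx_{s,t}-\eta\bH_r\bv_{s,t}$, becomes simply $\tilbx_{s,t+1,r}=\tilbx_{s,t,r}-\eta\tilbv_{s,t,r}$ after left-multiplication by $\bH_r^{-1/2}$. Since $\bH_r$ is $\calF_{s,t}$-measurable, it factors through the conditional expectation, and by Lemmas~\ref{lem:tilf_equiv} and \ref{lem:bound_var} the transformed direction $\tilbv_{s,t,r}=\bH_r^{1/2}\bv_{s,t}$ satisfies $\bbE_{\calB_{s,t}}[\tilbv_{s,t,r}\,\vert\,\calF_{s,t}]=\nabla\tilf_r(\tilbx_{s,t,r})$, so it is exactly the SVRG variance-reduced gradient of $\tilf_r$ at the transformed iterates. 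By Lemmas~\ref{lem:tilf_sc_sm} and \ref{lem:spectral_bound}, $\tilf_r$ is $(\gamma\barmu)$-strongly convex and $(\Gamma\barL)$-smooth, and crucially the transformation preserves function values, $\tilf_r(\tilbx_{s,t,r})=f(\bx_{s,t})$ and $\tilf_r(\tilbx_r^*)=f(\bx^*)$.

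Next I would derive the per-step descent inequality in the transformed coordinates. Expanding $\normt{\tilbx_{s,t+1,r}-\tilbx_r^*}^2$ and taking the conditional expectation produces a cross term $\lranglet{\nabla\tilf_r(\tilbx_{s,t,r})}{\tilbx_{s,t,r}-\tilbx_r^*}$, which I bound below by $f(\bx_{s,t})-f(\bx^*)$ via convexity of $\tilf_r$. For the second-moment term $\bbE[\normt{\tilbv_{s,t,r}}^2\,\vert\,\calF_{s,t}]$ I would split into squared mean and variance: the former is controlled by $(\Gamma\barL)$-smoothness of $\tilf_r$, and the latter by scaling the bound \eqref{eq:bound_var} of Lemma~\ref{lem:bound_var} by $\norm{\bH_r}\le\Gamma$. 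Collecting terms, and using the step-size bound $\eta<\min\{b/12,1\}/(\Gamma\barL)$ to keep the progress coefficient positive, yields an inequality of the form
\begin{align}
\bbE\big[\normt{\tilbx_{s,t+1,r}-\tilbx_r^*}^2\,\big\vert\,\calF_{s,t}\big]
&\le \normt{\tilbx_{s,t,r}-\tilbx_r^*}^2 \nn\\
&\quad - 2\eta\Big(1-\tfrac{4\eta\Gamma\barL}{b}\Big)\big(f(\bx_{s,t})-f(\bx^*)\big) \nn\\
&\quad + \tfrac{4\eta^2\Gamma\barL}{b}\big(f(\bx^s)-f(\bx^*)\big). \nn
\end{align}

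I would then take total expectations and sum over $t=0,\dots,m-1$. Because the progress terms are genuine function values and hence metric-independent, they accumulate cleanly, while the distance terms telescope and leave only the initial term $\normt{\tilbx_{s,0,r}-\tilbx_r^*}^2$, which I bound by $\tfrac{2}{\gamma\barmu}(f(\bx^s)-f(\bx^*))$ using $(\gamma\barmu)$-strong convexity of $\tilf_r$ at $\tilbx^{s,r}=\bH_r^{-1/2}\bx^s$. Dividing by $2\eta(1-\tfrac{4\eta\Gamma\barL}{b})m$ and invoking either option~I (for which $\bbE[f(\bx^{s+1})-f(\bx^*)]=\tfrac1m\sum_{t}\bbE[f(\bx_{s,t})-f(\bx^*)]$ over the uniform draw of $\tau_s$) or option~II (for which the same bound holds by Jensen's inequality and convexity of $f$) yields the one-outer-iteration contraction $\bbE[f(\bx^{s+1})-f(\bx^*)]\le\rho\,\bbE[f(\bx^s)-f(\bx^*)]$ with $\rho$ as in \eqref{eq:rho}. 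A short check shows $\rho<1$ once $\eta$ is small and $m$ large (the first summand vanishes as $m\to\infty$, and the second lies below $1$ under the step-size bound), and iterating over $s$ gives \eqref{eq:linear_conv}.

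The main obstacle is that the metric matrix $\bH_r$ is refreshed every $\Upsilon$ inner iterations, so the index $r$ changes within the inner loop and the distance terms $\normt{\tilbx_{s,t,r}-\tilbx_r^*}^2$ do not telescope verbatim: at each refresh one is left with a residual $(\bx_{s,t}-\bx^*)^T(\bH_{r_t}^{-1}-\bH_{r_{t-1}}^{-1})(\bx_{s,t}-\bx^*)$. The enabling observation is that the progress terms, being true function values, are invariant to the transformation index; the residuals must then be absorbed either by holding the search metric fixed across each outer iteration, so that mismatches occur only at outer boundaries and are re-anchored through the snapshot $\bx^s$, or by controlling them through the uniform spectral bounds $\gamma\bI\preceq\bH_r\preceq\Gamma\bI$ of Lemma~\ref{lem:spectral_bound} together with strong convexity, so that they do not degrade the leading-order rate. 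Making this metric-consistency step rigorous---a difficulty with no analogue in first-order SVRG---is the crux of the argument.
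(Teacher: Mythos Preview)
Your overall plan---coordinate transformation followed by an SVRG-style Lyapunov argument---is exactly the paper's framework. The paper, however, derives its one-step inequality differently: instead of the standard Johnson--Zhang split (convexity for the cross term, then bounding $\bbE\|\tilbv\|^2$ by squared-mean plus variance), it uses a chain \eqref{eq:first_long}--\eqref{eq:last_long} that combines $(\Gamma\barL)$-smoothness and $(\gamma\barmu)$-strong convexity to produce progress in terms of $\tilf_r(\tilbx_{s,t+1,r})=f(\bx_{s,t+1})$ directly. After taking expectation only the \emph{variance} term $\bbE\|\tbdelta_{s,t,r}\|^2$ survives, giving the coefficient $8\Gamma\barL\eta^2/b$ in \eqref{eq:succ_dist_bound} and, after telescoping, the specific $\rho$ in \eqref{eq:rho}. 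Your route would instead incur an additional $2\eta^2\Gamma\barL(f(\bx_{s,t})-f(\bx^*))$ from the squared-mean piece $\|\nabla\tilf_r(\tilbx_{s,t,r})\|^2$; the displayed inequality you wrote omits this term, so the progress coefficient should be $1-\eta\Gamma\barL-2\eta\Gamma\barL/b$, not $1-4\eta\Gamma\barL/b$. With the correct coefficient your approach still yields linear convergence, but with a different constant than \eqref{eq:rho}; to recover the stated $\rho$ you would need the paper's one-step bound.

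On the metric-change obstacle: you have put your finger on a genuine subtlety that the paper does not address explicitly. In the paper's proof the telescoping step from \eqref{eq:succ_dist_bound} to \eqref{eq:telescoped_uniform} is written as if $r$ were constant over the inner loop, even though $r=\lfloor(sm+t)/\Upsilon\rfloor$ changes every $\Upsilon$ steps; the mismatches $(\bx_{s,t}-\bx^*)^T(\bH_{r_t}^{-1}-\bH_{r_{t-1}}^{-1})(\bx_{s,t}-\bx^*)$ you describe are simply not treated. So your caution is well placed; in this respect your proposal is more honest than the paper's argument, though neither of your suggested fixes (holding the metric fixed per outer iteration, or absorbing residuals via the uniform spectral bounds at a $\kappa_\rmH$ cost per refresh) is carried out, and the paper provides no further guidance.
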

\begin{proof}
Fix an outer iteration $s$ and consider an inner iteration $t$. Define $r \defeq \floor{(sm+t)/L}$. For brevity, we omit   the dependence of $r$ on $s$ and $t$. The iteration in line~\ref{line:iteration} of Algorithm~\ref{algo:SLBFGS-nonuniform} 
  becomes
\begin{equation}
\tilbx_{s,t+1,r}  = \tilbx_{s,t,r} -\eta \tilbv_{s,t,r}. \label{eq:transform_iterate}
\end{equation}
Define \!$\tbdelta_{s,t,r}\!\defeq\! \tilbv_{s,t,r}\!-\!\nabla \tilf_r(\tilbx_{s,t,r})$. From Lemmas~\ref{lem:tilf_equiv} and \ref{lem:bound_var}, 
\begin{align}
&\hspace{-.2cm}\bbE_{\calB_{s,t}}\left.\left[\tilbv_{s,t,r}\right\vert\calF_{s,t}\right] = \bH_r^{1/2} \nabla f(\bx_{s,t}) = \nabla \tilf_r (\tilbx_{s,t,r})\;\;\mbox{and} \label{eq:exp_nabla_tilf}\\
&\hspace{-.2cm}\bbE_{\calB_{s,t}}\left.\left[\normt{\tbdelta_{s,t,r}}^2\right\vert\calF_{s,t}\right] \le \norm{\bH^{1/2}}^2\frac{4\barL}{b} (f(\bx_{s,t})-f(\bx^*)\nn\\
&\hspace{6cm}+f(\bx^s)-f(\bx^*))\nn\\
&\hspace{-.2cm}\le \frac{4\Gamma\barL}{b}\left(\tilf_r(\tilbx_{s,t,r})-\tilf_r(\tilbx_r^*)+\tilf_{r'}(\tilbx^{s,{r'}})-\tilf_{r'}(\tilbx_{r'}^*)\right),\label{eq:var_nabla_tilf}
\end{align}
where $r' \defeq \floor{sm/L}$. 
Using \eqref{eq:transform_iterate}, we can express the distance between $\tilbx_{s,t+1,r}$ and $\tilbx_r^*$ as
\begin{align}
&\norm{\tilbx_{s,t+1,r}-\tilbx_r^*}^2
= \norm{\tilbx_{s,t,r} - \tilbx_r^*}^2 \nn\\
&\hspace{1.5cm}+ 2\eta\left(\frac{\eta}{2}\norm{\tilbv_{s,t,r}}^2 - \lrangle{\tilbv_{s,t,r}}{\tilbx_{s,t,r} - \tilbx_r^*}\right). \label{eq:expanded_dist}
\end{align}
We can show 
\begin{align}
&\frac{\eta}{2}\norm{\tilbv_{s,t,r}}^2 \!-\! \lrangle{\tilbv_{s,t,r}}{\tilbx_{s,t,r} \!-\! \tilbx_r^*}\le \!-\!\left(\tilf_r (\tilbx_{s,t+1,r})\!-\!\tilf_r (\tilbx_{r}^*)\right)\nn\\
&-\lrangle{\tbdelta_{s,t,r}}{\tilbx_{s,t+1,r}-\tilbx_r^*} - \frac{\gamma\barmu}{2}\norm{\tilbx_{s,t,r}-\tilbx_r^*}^2 \label{eq:bound_expanded}
\end{align}
from steps \eqref{eq:first_long} to \eqref{eq:last_long} on the next page. 
\begin{figure*}[h!]
\begin{align} 
&\tilf_r (\tilbx_{s,t+1,r})+\frac{\eta}{2}\norm{\tilbv_{s,t,r}}^2 - \lrangle{\tilbv_{s,t,r}}{\tilbx_{s,t,r} - \tilbx_r^*}+\lrangle{\tbdelta_{s,t,r}}{\tilbx_{s,t+1,r}-\tilbx_r^*} +\frac{\gamma\barmu}{2}\norm{\tilbx_{s,t,r}-\tilbx_r^*}^2\label{eq:first_long}\\ 
=\;& \tilf_r (\tilbx_{s,t+1,r}) - \lrangle{\tilbv_{s,t,r}}{\tilbx_{s,t+1,r} +\frac{\eta}{2}\tilbv_{s,t,r}-\tilbx_r^*}+\lrangle{\tbdelta_{s,t,r}}{\tilbx_{s,t+1,r}-\tilbx_r^*} +\frac{\gamma\barmu}{2}\norm{\tilbx_{s,t,r}-\tilbx_r^*}^2\\
\le\;& \tilf_r (\tilbx_{s,t+1,r}) - \frac{\Gamma\barL}{2}\eta^2\norm{\tilbv_{s,t,r}}^2- \lrangle{\tilbv_{s,t,r}-\tbdelta_{s,t,r}}{\tilbx_{s,t+1,r} -\tilbx_r^*}+\frac{\gamma\barmu}{2}\norm{\tilbx_{s,t,r}-\tilbx_r^*}^2\label{eq:lea}\\
=\;& \tilf_r (\tilbx_{s,t+1,r}) - \lrangle{\nabla \tilf_r (\tilbx_{s,t,r})}{\tilbx_{s,t+1,r} -\tilbx_{s,t,r}}- \frac{\Gamma\barL}{2}\norm{\eta\tilbv_{s,t,r}}^2-\lrangle{\nabla \tilf_r (\tilbx_{s,t,r})}{\tilbx_{s,t,r}-\tilbx_r^*}+\frac{\gamma\barmu}{2}\norm{\tilbx_{s,t,r}-\tilbx_r^*}^2\\
\le\;&\tilf_r (\tilbx_{s,t,r}) +\lrangle{\nabla \tilf_r (\tilbx_{s,t,r})}{\tilbx_r^*-\tilbx_{s,t,r}}+\frac{\gamma\barmu}{2}\norm{\tilbx_r^*-\tilbx_{s,t,r}}^2
\le\;\tilf_r (\tilbx_{r}^*).\label{eq:last_long}
\end{align}\hrule
\end{figure*}
 In~\eqref{eq:lea}, we use the condition $\eta\!\le\! 1/(\Gamma\barL)$. In~\eqref{eq:last_long}, we use the $(\Gamma\barL)$-smoothness of $\tilf_r$ and the $(\gamma\barmu)$-strong convexity of $\tilf_r$ in Lemma~\ref{lem:tilf_sc_sm} respectively.\\
Now, substituting \eqref{eq:bound_expanded} into \eqref{eq:expanded_dist}, we have
\begin{align}
&\norm{\tilbx_{s,t+1,r}-\tilbx_r^*}^2 \le (1-\eta\gamma\barmu)\norm{\tilbx_{s,t,r} - \tilbx_r^*}^2 \nn\\
&- 2\eta\left(\tilf_r (\tilbx_{s,t+1,r})-\tilf_r (\tilbx_{r}^*)\right)-2\eta\lrangle{\tbdelta_{s,t,r}}{\tilbx_{s,t+1,r}-\tilbx_r^*}\nn\\
&=(1-\eta\gamma\barmu)\norm{\tilbx_{s,t,r} - \tilbx_r^*}^2 - 2\eta\left(\tilf_r (\tilbx_{s,t+1,r})-\tilf_r (\tilbx_{r}^*)\right)\nn\\
&\hspace{1.5cm}+ {2\eta^2} \normt{\tbdelta_{s,t,r}}^2 -2\eta\lrangle{\tbdelta_{s,t,r}}{\tilbx_{s,t,r}-\tilbx_r^*}.\label{eq:succ_bound_sqEuc}
\end{align} 
Taking expectation w.r.t.\ $\calB_{s,t}$ and using \eqref{eq:exp_nabla_tilf} and \eqref{eq:var_nabla_tilf}, we have
\begin{align}
&\left.\!\!\bbE_{\calB_{s,t}}\!\!\left[\norm{\tilbx_{s,t+1,r}-\tilbx_r^*}^2\!\!+\!2\eta(\tilf_r (\tilbx_{s,t+1,r})-\tilf_r (\tilbx_{r}^*))\right\vert\!\calF_{s,t}\right]\nn\\
&\le(1-\eta\gamma\barmu)\norm{\tilbx_{s,t,r} - \tilbx_r^*}^2 +\frac{8}{b}\Gamma\barL\eta^2(\tilf_r(\tilbx_{s,t,r})-\tilf_r(\tilbx_r^*)\nn\\
&\hspace{4.cm}+\tilf_{r'}(\tilbx^{s,{r'}})-\tilf_{r'}(\tilbx_{r'}^*)).  \label{eq:succ_dist_bound}
\end{align}
By bounding the factor $1-\eta\gamma\barmu$ by $1$, we can telescope~\eqref{eq:succ_dist_bound} over $t=0,\ldots,m-1$ and obtain 
\begin{align}
&\left.\bbE_{\calB_{s,(m-1]}}\left[\norm{\tilbx_{s,m,r}-\tilbx_r^*}^2\right\vert\calF_{s}\right]+ 2m\eta\left(1-\frac{4}{b}\Gamma\barL\eta\right)\nn\\ 
& \hspace{1.2cm}\times\Big\{\frac{1}{m}\sum_{t=1}^m\bbE_{\calB_{s,(t-1]}}\left.\left[\tilf_r (\tilbx_{s,t,r})-\tilf_r (\tilbx_r^*)\right\vert\calF_{s,t-1}\right]\Big\}\nn\\
&\hspace{0cm}\le \normt{\tilbx^{s,r'}-\tilbx_{r'}^*}^2+\frac{8}{b}\Gamma\barL\eta^2(1+m)\left(\tilf_{r'}(\tilbx^{s,r'})-\tilf_{r'}(\tilbx_{r'}^*)\right).\label{eq:telescoped_uniform}
\end{align}
If we use option II to choose $\bx^{s+1}$ (in line~\ref{line:uniform_ave}), we have $\tilbx^{s+1,r''}=1/m\sum_{i=1}^m \tilbx_{s,t,r''}$, where $r''\defeq \floor{(s+1)m/L}$. Using \eqref{eq:tilf} and Jensen's inequality, we have
\begin{align}
&\frac{1}{m}\sum_{t=1}^m\bbE_{\calB_{s,(t-1]}}\left.\left[\tilf_r (\tilbx_{s,t,r})-\tilf_r (\tilbx_r^*)\right\vert\calF_{s,t-1}\right] \nn\\
&\hspace{1cm}\ge \bbE_{\calB_{s,(m-1]}}\left.\left[\tilf_{r''}(\tilbx^{s+1,r''})-\tilf_{r''}(\tilbx^*_{r''})\right\vert\calF_{s}\right]. \label{eq:exp_Jensen}
\end{align}
Alternatively, if we use option I to determine $\bx^{s+1}$ (in line~\ref{line:sample_tau_s}), we still have \eqref{eq:exp_Jensen} (with inequality replaced by equality). 
If we further use 
Lemma~\ref{lem:sc_sm} to upper bound the term $\normt{\tilbx^{s,r'}-\tilbx_{r'}^*}^2$ in \eqref{eq:telescoped_uniform}, we have
\begin{align}
&2m\eta\left(1-\frac{4}{b}\Gamma\barL\eta\right)\bbE_{\calB_{s,(m-1]}}\left.\left[\tilf_{r''}(\tilbx^{s+1,r''})-\tilf_{r''}(\tilbx^*_{r''})\right\vert\calF_{s}\right]\nn\\
&\hspace{0cm}\le \left(\frac{8}{b}\Gamma\barL\eta^2(1+m)+\frac{2}{\gamma\barmu}\right)\left(\tilf_{r'}(\tilx^{s,r'})-\tilf_{r'}(\tilbx_{r'}^*)\right).
\end{align}
Using \eqref{eq:tilf} again and rearranging, we have 
\begin{equation}
\bbE\left.\left[f(\bx^{s+1})-f(\bx^*)\right\vert\calF_s\right] \le \rho(f(\bx^s)-f(\bx^*)).
\end{equation}
We take expectation on both sides to complete the proof. 
\end{proof}

\begin{remark}\label{rmk:main}
We compare our linear convergence rate $\rho$ in~\eqref{eq:rho}  with those in \cite{Moritz_16} and \cite{Gower_16}. Since the convergence rates in these two works are almost the same, 
we use the rate in \cite{Moritz_16} for comparison. The linear rate $\trho$ in~\cite{Moritz_16} equals 
\begin{align}
&\frac{1}{2\tgamma\mu_{\min}m\eta(1-\eta\tGamma L_{\max}\kappa_{\max}\tkappa_{\rmH})} + \frac{\eta\tGamma L_{\max}\kappa_{\max}\tkappa_{\rmH}}{1-\eta\tGamma L_{\max}\kappa_{\max}\tkappa_{\rmH}}.\nn
\end{align}
For simplicity, if we let $b=1$, $\mu_{\min}=\barmu$, $L_{\max} = \barL$, $\tgamma=\gamma$, $\tGamma = \Gamma$ and ignore other constant factors,\footnote{However, bear in mind that by doing all these substitutions, $\trho$ has already been much improved.} we notice that there is an additional multiplicative factor $\kappa\kappa_\rmH$ associated with $\eta\Gamma\barL$ in $\trho$. 
As a result $\trho>\rho$. 
A more direct way to observe the detrimental effects of this additional $\kappa\kappa_\rmH$ is to compare the computational complexities resulting from $\rho$ and $\trho$. See Remark~\ref{rmk:complexity} for details.  
The reason that we manage to avoid this factor in our rate $\rho$ is precisely because we adopt the {\em coordinate transformation framework} in our analysis (see proof above). Specifically, by absorbing the sequence of metric matrices $\{\bH_r\}_{r\ge 0}$ into decision vectors and functions, we are able to proceed through bounding the (expected squared Euclidean) distance between $\tilbx_{s,t+1,r}$ and $\tilbx_r^*$, instead of directly bounding $f(\bx_{s,t+1})$ via the smoothness property of $f$ (cf.~proof of Theorem~7 in \cite{Moritz_16}). Thus in our analysis, we avoid the additional appearance of $\barL$ and $\Gamma$ (which leads to the additional factor $\kappa\kappa_\rmH$). 
\end{remark}

\begin{corollary}\label{cor:as_conv}
In Algorithm~\ref{algo:SLBFGS-nonuniform}, $\{f(\bx^s)\}_{s\ge 0}$ converges to $f(\bx^*)$ R-linearly almost surely with rate $\rho$. 
\end{corollary}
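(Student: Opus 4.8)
The plan is to upgrade the in-expectation linear rate from Theorem~\ref{thm:main} to an almost-sure R-linear rate via Markov's inequality and the first Borel--Cantelli lemma, inserting a subexponential polynomial prefactor so as to recover the \emph{exact} rate $\rho$. Throughout I would write $\Delta_s\defeq f(\bx^s)-f(\bx^*)$; since $f$ is strongly convex with unique minimizer $\bx^*$ (see the remark following Assumption~\ref{assump:sc_sm}), we have $\Delta_s\ge 0$, so $\abs{f(\bx^s)-f(\bx^*)}=\Delta_s$, and Theorem~\ref{thm:main} supplies $\bbE[\Delta_s]\le \rho^s\Delta_0$ with $\rho\in(0,1)$.

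First I would apply Markov's inequality to the nonnegative variable $\Delta_s$ against the deterministic threshold $s^2\rho^s$:
\begin{equation}
\bbP\left(\Delta_s\ge s^2\rho^s\right)\le \frac{\bbE[\Delta_s]}{s^2\rho^s}\le \frac{\Delta_0}{s^2}. \nn
\end{equation}
Because $\sum_{s\ge 1}\Delta_0/s^2<\infty$, the first Borel--Cantelli lemma then guarantees that, almost surely, the event $\{\Delta_s\ge s^2\rho^s\}$ occurs for only finitely many $s$; equivalently, on an event of probability one there is a random, almost surely finite index $S$ with $\Delta_s< s^2\rho^s$ for every $s\ge S$.

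Finally I would take $\varepsilon_s\defeq s^2\rho^s\ge 0$ as the dominating sequence in Definition~\ref{def:linear_conv}. On the probability-one event above, $\abs{f(\bx^s)-f(\bx^*)}=\Delta_s\le\varepsilon_s$ for all sufficiently large $s$, and it remains only to verify that $\varepsilon_s\to 0$ Q-linearly with rate $\rho$: indeed $\varepsilon_s\to 0$ as $\rho<1$, while
\begin{equation}
\limsup_{s\to\infty}\frac{\varepsilon_{s+1}}{\varepsilon_s}=\limsup_{s\to\infty}\rho\left(\frac{s+1}{s}\right)^2=\rho. \nn
\end{equation}
Both conditions of R-linear convergence with rate $\rho$ are thereby met, which would complete the proof.

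The one delicate point is matching the rate exactly rather than losing an arbitrarily small amount. The naive choice of threshold $(\rho')^s$ for a fixed $\rho'\in(\rho,1)$ keeps the probabilities summable but yields only the strictly worse rate $\rho'$; the role of the polynomial factor $s^2$ (any $a_s$ with $\sum 1/a_s<\infty$ and $a_{s+1}/a_s\to 1$ would serve equally well) is to preserve Borel--Cantelli summability while driving the limiting ratio $\varepsilon_{s+1}/\varepsilon_s$ back down to precisely $\rho$. I expect this prefactor calibration, together with the bookkeeping that the exceptional null set and the random starting index $S$ do not affect the \emph{rate}, to be the only nonroutine aspect of the argument.
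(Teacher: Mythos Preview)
Your proof is correct and follows the same overall skeleton as the paper: use Theorem~\ref{thm:main} to get $\bbE[\Delta_s]\le \rho^s\Delta_0$, apply Markov's inequality against a carefully chosen threshold, invoke the first Borel--Cantelli lemma to obtain an almost-sure dominating sequence, and then verify that this sequence is Q-linear with rate exactly $\rho$. The difference lies in the choice of threshold. The paper takes $\varepsilon'(\rho+1/\sqrt{s})^s$ for arbitrary $\varepsilon'>0$, which forces it to bound $(\rho/(\rho+1/\sqrt{s}))^s$ via $(1+x)^s\le e^{sx}$ to obtain summability, then argue that $(\rho+1/\sqrt{s})^s=\rho^{\,s-\Theta(\sqrt{s})}$ and that the ratio of consecutive terms tends to $\rho$; it also carries an extra $\varepsilon'\to 0$ step. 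Your choice $\varepsilon_s=s^2\rho^s$ is more elementary: summability of $\Delta_0/s^2$ is immediate, and $\varepsilon_{s+1}/\varepsilon_s\to\rho$ is a one-line computation. Both routes deliver the same sharp rate $\rho$ (the point emphasized in Remark~\ref{rmk:comp_Goldfarb}); yours simply gets there with less machinery, and your closing remark that any $a_s$ with $\sum 1/a_s<\infty$ and $a_{s+1}/a_s\to 1$ would do captures exactly why both choices work.
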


\begin{proof}
Our proof is inspired by~\cite[Corollary~2]{Goldfarb_17} and  mainly  
leverages  the Borel-Cantelli lemma~\cite{Will_91}. 
For any $\epsilon'>0$ and $0\!<\!\delta\!<\!1-\rho$, consider the sequence of events 
$\{\calE_s\}_{s\in\bbN}$ such that
\begin{align}
\calE_s&\defeq\left\{\omega\in\Omega:\frac{f(\bx^s(\omega))-f(\bx^*)}{(\rho+1/\sqrt{s})^s}>\epsilon'\right\}, \,\forall\,s\in\bbN. \label{eq:def_cal_E}
\end{align}
Therefore, 
\begin{align}
\sum_{s=1}^\infty \bbP(\calE_s) \lea &\sum_{s=1}^\infty \frac{\bbE[f(\bx^s)-f(\bx^*)]}{\epsilon'(\rho+1/\sqrt{s})^s}\nn\\
\leb &\frac{f(\bx^0)-f(\bx^*)}{\epsilon'}\sum_{s=1}^\infty \left(\frac{\rho}{\rho+1/\sqrt{s}}\right)^s \nn\\
\lec &  \frac{f(\bx^0)-f(\bx^*)}{\epsilon'}\sum_{s=1}^\infty \exp\left(-\frac{\sqrt{s}}{\rho+1/\sqrt{s}}\right)\nn\\
\led & \frac{f(\bx^0)-f(\bx^*)}{\epsilon'}\sum_{s=1}^\infty \exp\left(-\frac{\sqrt{s}}{\rho+1}\right) < \infty, \label{eq:finite_sum_prob}
\end{align}
where in (a) we use Markov's inequality, in (b) we use~\eqref{eq:linear_conv} in Theorem~\ref{thm:main}, in (c) we use  
\begin{equation}
(1+x)^s\le e^{sx}, \forall\,s\in \bbN\cup\{0\}, \forall\, x>-1, \label{eq:lin_exp_ineq}
\end{equation}
and in (d) we use the fact that $s\ge 1$. 
Thus by the Borel-Cantelli lemma, ${\bbP}(\limsup_{s\to\infty}\!\calE_s)\!=\!0$, or equivalently, 
\begin{equation}
\bbP\left(\liminf_{s\to\infty}\calE_s^\rmc\right)=1.
\end{equation}
The definition of $\calE_s$ in~\eqref{eq:def_cal_E} implies that 
\begin{equation}
\liminf_{s\to\infty}\calE_s^\rmc=\left\{\omega\in\Omega:\limsup_{s\to \infty}\frac{f(\bx^s(\omega))-f(\bx^*)}{(\rho+1/\sqrt{s})^s}\le\epsilon'\right\}.\nn 
\end{equation}
Since $\epsilon'>0$ is arbitrary, we have
\begin{equation}
\bbP\left(\lim_{s\to \infty}\frac{f(\bx^s)-f(\bx^*)}{(\rho+1/\sqrt{s})^s}=0\right)=1
\end{equation}
or equivalently, $f(\bx^s)-f(\bx^*)=o((\rho+1/\sqrt{s})^s)$ almost surely, for any $s\ge 0$. 
For convenience, define a sequence $\{\varpi_s\}_{s\ge 0}$ such that $\varpi_s\defeq (\rho+1/\sqrt{s})^s$, for any $s\ge 0$. By applying \eqref{eq:lin_exp_ineq} to $\varpi_s$, we have $\varpi_s=\rho^{s - \Theta(\sqrt{s})}$ (the implied constant in the $\Theta(\cdot)$ notation is positive). Since $\rho^{ - \Theta(\sqrt{s+1}) }/\rho^{ - \Theta(\sqrt{s }) }\to 1$ as $s\to\infty$, $\lim_{s\to\infty} \varpi_{s+1}/\varpi_{s}=\rho$ as desired. 
\end{proof}

\begin{remark}\label{rmk:comp_Goldfarb}
Note that the analysis techniques in Corollary~\ref{cor:as_conv} can be applied to any stochastic algorithm with linear convergence in expectation, therefore are of independent interest. 
Although a similar result was proved in~\cite[Corollary~2]{Goldfarb_17},  
it is weaker than Corollary~\ref{cor:as_conv}. Specifically, the almost sure linear convergence rate therein is {\em strictly} worse than the corresponding linear rate in expectation. In contrast, by leveraging refined analysis techniques, we show that such a degradation can indeed be avoided. 
\end{remark}


\begin{remark}
By the $\barmu$-strong convexity of $f$ (see Assumption~\ref{assump:sc_sm}), we have $\norm{\bx-\bx^*}^2\!\le\! (2/\barmu)(f(\bx)-f(\bx^*))$, for any $\bx\!\in\!\bbR^d$. Therefore, the linear convergence of $\{f(\bx^s)\}_{s\ge 0}$ to $f(\bx^*)$ in expectation (in Theorem~\ref{thm:main}) also implies the R-linear convergences of $\{\bx^s\}_{s\ge 0}$ to $\bx^*$ in expectation. 
Similarly, we can also derive the almost sure R-linear convergence of $\{\bx^s\}_{s\ge 0}$ to $\bx^*$ from Corollary~\ref{cor:as_conv}. 
\end{remark}

\section{Complexity Analysis}\label{sec:comp_analysis}

In this section we provide a framework for analyzing the computational complexity of the stochastic L-BFGS algorithm in Algorithm~\ref{algo:SLBFGS-nonuniform}. 
Our framework can be easily generalized to other stochastic second-order algorithms, e.g., SQN algorithm in~\cite{Byrd_16a}. To begin with, we make two additional assumptions.

\begin{assump}\label{assump:gradient_O(d)}
For any $\bx\in\bbR^d$ and $i\in[n]$, the gradient $\nabla f_i(\bx)$ can be computed in $O(d)$ operations.\footnote{An operation refers to evaluation of an elementary function, such as addition, multiplication and logarithm.}
\end{assump}

\begin{assump}\label{assump:Hess_vect_O(d)}
For any $\bx,\by\in\bbR^d$ and $i\in[n]$, the Hessian-vector product $\nabla^2 f_i(\bx)\by$ can be computed in $O(d)$ operations.
\end{assump}

\begin{remark}
These two assumptions are naturally satisfied for ERM problems \eqref{eq:ERM} with Tikhonov regularization. For these problems, $R(\bx) = \frac{1}{2}\norm{\bx}^2$ and 
\begin{align}
\nabla  \ell(\ba_i^T\bx,b_i)&= \ell'(\ba_i^T\bx,b_i)\ba_i + \lambda\bx,\label{eq:ERM_first_deriv}\\
\nabla^2  \ell(\ba_i^T\bx,b_i)\by&= \ell''(\ba_i^T\bx,b_i)(\ba_i^T\by)\ba_i+\lambda\by,\label{eq:ERM_sec_deriv}
\end{align}
where $\ell'(\cdot,\cdot)$ and $\ell''(\cdot,\cdot)$ are first and second derivatives of $\ell(\cdot,\cdot)$ w.r.t.\ the first argument. We easily see 
that the right-hand sides of both \eqref{eq:ERM_first_deriv} and \eqref{eq:ERM_sec_deriv} can be computed in $O(d)$ operations. 
\end{remark}

From Algorithm~\ref{algo:SLBFGS-nonuniform}, we observe that its total computational cost $C$ can be split into three parts. The first part $C_1$ involves computing the variance-reduced gradient $\bv_{s,t}$ in \eqref{eq:var_reduced_grad}, the second part $C_2$ involves computing $\bH_r\bv_{s,t}$ (via two-loop recursion) in line~\ref{line:two_loop}, and the third part $C_3$ involves computing the correction pair $(\bs_r,\by_r)$ in line~\ref{line:correction_pair}. 

\begin{prop}\label{prop:comp}
Let Assumptions~\ref{assump:twice_diff} to \ref{assump:Hess_vect_O(d)} hold. In Algorithm~\ref{algo:SLBFGS-nonuniform}, 
\begin{align}
C_1 &= O\left((n+\kappa\kappa_\rmH)d\log\left(1/\epsilon\right)\right),\label{eq:C_1}\\
C_2 &= O\left(\kappa\kappa_\rmH d\log\left(1/\epsilon\right)\right),\label{eq:C_2}\\
C_3 &= O\left(d\log\left(1/\epsilon\right)\right).\label{eq:C_3}
\end{align}
Thus the total computational cost $C\defeq \sum_{i=1}^3 C_i$ equals
\begin{equation}
 O\left(\left(n+\kappa\kappa_\rmH\right)d\log\left(1/\epsilon\right)\right).\label{eq:C}
\end{equation}
\end{prop}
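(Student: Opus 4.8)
The plan is to combine the linear-convergence guarantee of Theorem~\ref{thm:main} with a per-iteration operation count, after first pinning down how the inner-loop length $m$ must scale with the conditioning quantities. First I would determine the number of outer iterations needed for $\epsilon$-accuracy: since Theorem~\ref{thm:main} gives $\bbE[f(\bx^s)-f(\bx^*)]\le\rho^s(f(\bx^0)-f(\bx^*))$ with $\rho<1$, driving the expected optimality gap below $\epsilon$ requires $S=O(\log(1/\epsilon))$ outer iterations, \emph{provided} $\rho$ can be made a constant bounded away from $1$.

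The crucial step is to choose the step size and $m$ so that $\rho$ is indeed such a constant. I would set $\eta=\Theta(1/(\Gamma\barL))$, respecting the constraint $\eta<\min\{b/12,1\}/(\Gamma\barL)$, so that $\eta\Gamma\barL$ is a constant strictly below $b/12$. With this choice the second summand of \eqref{eq:rho} is a constant bounded below $1$, while the first summand simplifies: using $\Gamma/\gamma=\kappa_\rmH$ (Remark~\ref{rmk:spectral_bound}) and $\barL/\barmu=\kappa$, it becomes $\Theta(\kappa\kappa_\rmH/m)$. Hence taking $m=\Theta(\kappa\kappa_\rmH)$ makes $\rho$ a universal constant less than $1$ and simultaneously satisfies the ``$m$ sufficiently large'' hypothesis. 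This is the main obstacle: everything downstream hinges on the clean identification $m=\Theta(\kappa\kappa_\rmH)$, and it is exactly here that the sharpened spectral bound of Lemma~\ref{lem:spectral_bound} pays off, since a looser estimate of $\kappa_\rmH$ would inflate every resulting term.

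With $m$ fixed, I would tally the per-iteration costs using Assumptions~\ref{assump:gradient_O(d)} and \ref{assump:Hess_vect_O(d)}, treating $b$, $b_\rmH$, and $M$ as constants and taking $\Upsilon=\Theta(m)$. For $C_1$, each outer iteration computes one full gradient at cost $O(nd)$ and $m$ variance-reduced gradients at cost $O(bd)$ each; over $S$ outer iterations this gives $O((nd+bmd)S)=O((n+\kappa\kappa_\rmH)d\log(1/\epsilon))$. For $C_2$, each of the $mS$ two-loop recursions costs $O(Md)$, giving $O(Md\,mS)=O(\kappa\kappa_\rmH d\log(1/\epsilon))$. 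For $C_3$, the correction pair is formed only every $\Upsilon$ inner iterations, i.e.\ $O(mS/\Upsilon)=O(S)$ times, each at cost $O(b_\rmH d)$ for the subsampled Hessian-vector product, yielding $O(d\log(1/\epsilon))$.

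Finally I would add the three parts; the total is dominated by the $nd$ term in $C_1$ together with the $\kappa\kappa_\rmH d$ terms, giving $C=O((n+\kappa\kappa_\rmH)d\log(1/\epsilon))$. A minor bookkeeping point I would address is the cost of forming the averaged iterates $\barbx_r$ in line~\ref{line:barx}: since each inner iterate enters $O(1)$ averaging windows, the total averaging cost is $O(mSd)=O(\kappa\kappa_\rmH d\log(1/\epsilon))$, which is absorbed into $C_2$ (and hence into the total) and does not affect the final order.
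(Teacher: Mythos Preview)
Your proposal is correct and follows essentially the same approach as the paper: choose $\eta$ and $m$ so that $\rho$ in \eqref{eq:rho} becomes a universal constant below $1$, deduce $S=O(\log(1/\epsilon))$ outer iterations, and then count per-iteration operations using Assumptions~\ref{assump:gradient_O(d)} and~\ref{assump:Hess_vect_O(d)}. The only cosmetic difference is in the parameter conventions: the paper takes $\eta=\theta b/(\Gamma\barL)$, $m=\theta'\kappa\kappa_\rmH/b$, $M=\Theta(b)$, and $b_\rmH=\Theta(\Upsilon)$ (allowing $b$ to vary while keeping $mb$ and $mM$ of order $\kappa\kappa_\rmH$), whereas you fix $b,M,b_\rmH$ as constants and take $\Upsilon=\Theta(m)$; both routes yield the same orders for $C_1,C_2,C_3$.
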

\begin{proof}
{We leverage techniques} that have become standard in the SVRG literature (e.g.,~\cite{Xiao_14}). In~\eqref{eq:rho}, if we choose $\eta = \theta b/(\Gamma\barL)$ for some $0<\theta<1/12$, $m = \theta'\kappa\kappa_\rmH/b$ for some large enough positive constant $\theta'$, and use $1+1/m\le 2$, 
then 
\begin{align}
\rho 
&= \frac{1}{\theta'\theta(1-4\theta)}+\frac{8\theta}{1-4\theta}<1.
\end{align}
As a result, the required number of outer iterations to achieve $\epsilon$-suboptimality is $O(\log(1/\epsilon))$. Thus \eqref{eq:C_1} follows from Assumption~\ref{assump:gradient_O(d)} and that $2mb$ gradients (of component functions) are computed in each inner iteration. If we further choose $M=\Theta(b)$, then \eqref{eq:C_2} follows from the fact that two-loop recursion can be done in $O(Md)$ time~\cite[Chapter~7]{Nocedal_06}. Lastly, if we choose $b_\rmH=\Theta(\Upsilon)$, then we obtain \eqref{eq:C_3} using Assumption~\ref{assump:Hess_vect_O(d)}. 
\end{proof}

\begin{remark}\label{rmk:complexity}
Following a similar argument, we can deduce the total complexity estimate $\tilC$ based on the linear rate $\trho$ (see Remark~\ref{rmk:main}) derived in \cite{Moritz_16} as
\begin{equation}
\tilC=O\left(\left(n+b(\kappa_{\max}\tkappa_\rmH)^2\right)d\log\left({1}/{\epsilon}\right)\right).\label{eq:tilC}
\end{equation}
Compared with $\tilC$, we observe that our complexity estimate $C$ in \eqref{eq:C} is much better, in several aspects. First, the dependence of $C$ on the condition number $\kappa\kappa_\rmH$ is linear, rather than quadratic. The quadratic dependence of $\kappa_{\max}\tkappa_\rmH$ in $\tilC$ is precisely caused by the additional $\kappa_{\max}\tkappa_\rmH$ in $\trho$ (see Remark \ref{rmk:main}). 
Second, $C$ is independent of the mini-batch size $b$. The appearance of $b$ in $\tilC$ is a result of the loose bound on variance of $\bv_{s,t}$ (cf.\ \eqref{eq:bound_Moritz} and \eqref{eq:bound_Gower}). Third, the condition number $\kappa\kappa_\rmH$ in $C$ is much more smaller than $\kappa_{\max}\tkappa_\rmH$ in $\tilC$ for ill-conditioned problems. This is a result of the non-uniform sampling of $\calB_{s,t}$ and our improved bound on the spectra of $\{\bH_r\}_{r\ge 0}$. 
\end{remark}

\begin{remark}
As our coordinate transformation framework unifies the design and analysis of stochastic first- and second-order algorithms, we believe that momentum-based acceleration techniques for stochastic first-order methods~\cite{Lin_15,Allen_Zhu_16} can be applied to Algorithm~\ref{algo:SLBFGS-nonuniform} as well. (Details are left to future work.) In this case, the dependence on $\kappa\kappa_\rmH$ in $C$ may be further improved to $\sqrt{\kappa\kappa_\rmH}$~\cite{Allen_Zhu_16}. 
\end{remark}

\section{Acceleration Strategies}\label{sec:acc_strategy}
In this section, we propose three practical acceleration strategies. 
We follow the notations in Section~\ref{sec:algo} and  Algorithm~\ref{algo:SLBFGS-nonuniform}. 
As will be shown in Section~\ref{sec:perf_acc}, all of these strategies result in faster convergence in practice. For the first and second strategies, we also provide their theoretical analyses in Propositions~\ref{prop:exp_samp_ave} and \ref{prop:subsamp_grad_outer}, respectively. See Appendices~\ref{sec:proof_exp_ave} and \ref{sec:proof_subsamp_grad_outer} for the proofs of these two propositions. 

\subsection{Geometric Sampling/Averaging Scheme}\label{sec:exp_samp_ave}
Instead of 
choosing $\bx^{s+1}$ according to option I or II in Algorithm~\ref{algo:SLBFGS-nonuniform}, inspired by \cite{Konecny_13}, 
we can introduce a ``forgetting'' effect by considering two other schemes:\\[.1cm]
{\bf option III}: Sample $\tau_s$ randomly from $[m]$ from the  distribution $Q\defeq (\beta^{m-1}/c,\beta^{m-2}/c,\ldots,1/c)$ and set $\bx^{s+1}:=\bx_{s,\tau_s}$,\\[.1cm]
{\bf option IV}: $\bx^{s+1} := \frac{1}{c}\sum_{t=1}^m{\beta^{m-t}}\bx_{s,t}$,\\[.1cm]
where $0<\beta\le 1-\eta\gamma\barmu<1$ and the normalization constant $c\defeq \sum_{t=1}^m{\beta^{m-t}}$. Since $\beta\in(0,1)$, we observe that in both options III and IV,  
more recent iterates (i.e., iterates $\bx_{s,t}$ with larger indices $t$) will have larger contributions to $\bx^{s+1}$. 
Theoretically, these two schemes can be analyzed in a unified manner, as shown in the following proposition. 

\begin{prop}\label{prop:exp_samp_ave}
In Algorithm~\ref{algo:SLBFGS-nonuniform}, choose $\eta\!<\!\min\{b/12,\!1\}/(\Gamma\barL)$ and $m$ sufficiently large. With either option III or IV, we have
\begin{equation}
\bbE\left[f(\bx^s)-f(\bx^*)\right] \le \barrho^s\left(f(\bx^0)-f(\bx^*)\right),\;\mbox{where}\vspace{-.4cm}\label{eq:faster_linear_conv}
\end{equation}
\begin{align}
\barrho \defeq &\frac{b}{\gamma\barmu c'\eta\left(b-4\eta\Gamma\barL/(1-\eta\gamma\barmu)\right)}\nn\\
&\quad\quad+\frac{4\eta\Gamma\barL}{b-4\eta\Gamma\barL/(1-\eta\gamma\barmu)}\left(1+\frac{1}{c'}\right) < 1 \label{eq:barrho}
\end{align}
and $c'\defeq c/(1-\eta\gamma\barmu)^m$. 
\end{prop}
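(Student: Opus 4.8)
The plan is to re-use the single-inner-iteration estimate already established in the proof of Theorem~\ref{thm:main}, namely inequality~\eqref{eq:succ_dist_bound}, which was derived \emph{before} any choice of $\bx^{s+1}$ was made and therefore holds verbatim here. Writing $\alpha\defeq 1-\eta\gamma\barmu\in(0,1)$, $D\defeq 8\Gamma\barL\eta^2/b$, and abbreviating $\delta_t\defeq\bbE_{\calB_{s,(t-1]}}[\tilf_r(\tilbx_{s,t,r})-\tilf_r(\tilbx_r^*)\mid\calF_s]$ and $A_t\defeq\bbE[\|\tilbx_{s,t,r}-\tilbx_r^*\|^2\mid\calF_s]$, the estimate~\eqref{eq:succ_dist_bound} reads $A_{t+1}+2\eta\delta_{t+1}\le\alpha A_t+D\delta_t+D(f(\bx^s)-f(\bx^*))$. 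The crucial departure from Theorem~\ref{thm:main} is that I will \emph{not} bound $\alpha$ by $1$; instead I keep the contraction factor and telescope with geometric weights, so that the ``forgetting'' structure of options~III/IV is reproduced analytically.

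Concretely, I would multiply the $t$-th copy of this recursion by $\alpha^{-(t+1)}$ and sum over $t=0,\dots,m-1$. The distance terms then telescope exactly, leaving $\alpha^{-m}A_m-A_0$, while the function-value terms collapse into a single geometrically weighted sum $\sum_{t=1}^m\alpha^{-t}\delta_t$ whose effective coefficient is $2\eta-D/\alpha$, i.e. proportional to $b-4\eta\Gamma\barL/(1-\eta\gamma\barmu)$ — exactly the denominator appearing in~\eqref{eq:barrho}. Dropping the nonnegative term $\alpha^{-m}A_m$, bounding the residual $A_0=\|\tilbx^{s,r'}-\tilbx_{r'}^*\|^2$ by strong convexity through Lemma~\ref{lem:sc_sm} (which gives $A_0\le(2/(\gamma\barmu))(f(\bx^s)-f(\bx^*))$), and recalling $\delta_0=f(\bx^s)-f(\bx^*)$ via~\eqref{eq:tilf}, yields a bound of the form $(2\eta-D/\alpha)\sum_{t=1}^m\alpha^{-t}\delta_t\le\big(2/(\gamma\barmu)+O(D)+D\textstyle\sum_{t=1}^m\alpha^{-t}\big)(f(\bx^s)-f(\bx^*))$.

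The key step that links this to options~III and~IV is the termwise comparison $\beta^{m-t}\le(1-\eta\gamma\barmu)^{m-t}=\alpha^{m}\alpha^{-t}$, valid precisely because $0<\beta\le\alpha$ and $m-t\ge0$. Hence for option~IV, Jensen's inequality applied to the convex $\tilf_{r''}$ (together with~\eqref{eq:tilf}) gives $f(\bx^{s+1})-f(\bx^*)\le\frac1c\sum_{t=1}^m\beta^{m-t}\delta_t\le\frac{\alpha^m}{c}\sum_{t=1}^m\alpha^{-t}\delta_t=\frac1{c'}\sum_{t=1}^m\alpha^{-t}\delta_t$, where $c'=c/(1-\eta\gamma\barmu)^m$; for option~III the same expression arises as an exact expectation over $\tau_s\sim Q$, with no Jensen slack. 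Substituting the telescoped bound on $\sum_{t=1}^m\alpha^{-t}\delta_t$ and taking a further expectation then produces $\bbE[f(\bx^{s+1})-f(\bx^*)\mid\calF_s]\le\barrho\,(f(\bx^s)-f(\bx^*))$ with $\barrho$ as in~\eqref{eq:barrho}; iterating over $s$ and taking total expectations gives~\eqref{eq:faster_linear_conv}.

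I expect the main obstacle to be exactly this alignment between the \emph{algorithmic} averaging weights $\beta^{m-t}$ and the \emph{analytic} contraction factor $\alpha=1-\eta\gamma\barmu$: the telescoping is clean only with weights $\alpha^{-t}$, so the hypothesis $\beta\le1-\eta\gamma\barmu$ must be invoked to dominate the $\beta$-weighted objective gap by the $\alpha$-weighted one, and it is this domination that converts the factor $m$ in~\eqref{eq:rho} into $c'=c/(1-\eta\gamma\barmu)^m$ in~\eqref{eq:barrho}. Secondary bookkeeping issues — the dependence of the metric index $r$ on $t$ within the inner loop and the identification of the running functions $\tilf_r$ with the single function $\tilf_{r''}$ used in the Jensen/sampling step — are handled exactly as in the proof of Theorem~\ref{thm:main}, since~\eqref{eq:tilf} renders every transformed objective gap equal to the untransformed gap $f(\bx_{s,t})-f(\bx^*)$.
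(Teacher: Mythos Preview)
Your proposal is correct and follows essentially the same approach as the paper: both start from the one-step bound~\eqref{eq:succ_dist_bound}, retain the contraction factor $1-\eta\gamma\barmu$ (rather than bounding it by $1$ as in Theorem~\ref{thm:main}), telescope with the corresponding geometric weights, invoke $\beta\le 1-\eta\gamma\barmu$ to dominate the algorithmic $\beta$-weights by the analytic $\alpha$-weights, and close with Jensen (option~IV) or a direct expectation over $\tau_s$ (option~III) together with Lemma~\ref{lem:sc_sm}. The only cosmetic difference is that the paper multiplies by $(1-\eta\gamma\barmu)^{m-t}$ whereas you multiply by $\alpha^{-(t+1)}$, which is the same telescoping up to a common factor of $\alpha^{m}$.
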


\begin{remark}\label{rmk:exp_samp_ave}
In the literature~\cite{Moritz_16,Gower_16}, usually option I or~II (in Algorithm~\ref{algo:SLBFGS-nonuniform}) is analyzed   to prove that the stochastic L-BFGS algorithms therein converge linearly. However, for faster convergence in practice, $\bx^{s+1}$ is chosen to be the last inner iterate $\bx_{s,m}$. 
 However, the latter strategy is not amenable to linear convergence analysis. This {\em gap between theory and practice} is filled in by our geometric sampling or averaging scheme, i.e., option III or IV. Specifically, as shown in Figure~\ref{fig:geom_outer_itr}, our scheme not only yields linear convergence in theory, but also performs as well as the ``last inner iterate'' scheme in practice. 
\end{remark}

\subsection{Subsampled Gradient Stabilization}\label{sec:subsamp_grad_outer}

In Algorithm~\ref{algo:SLBFGS-nonuniform}, at the beginning of each outer iteration indexed by~$s$, we compute a full gradient $\bg_s$ to stabilize the subsequent (inner) iterations. Inspired by \cite{Reza_15}, we propose a strategy that only computes a subsampled gradient $\tilbg_s$ at the start of each outer iteration $s$. 
Specifically, we uniformly sample a subset $\tilcalB_s$ of $[n]$ with size $\tilb_s$ {\em without replacement} and then form $\tilbg_s\defeq ({1}/{\tilb_s})\sum_{i\in\tilcalB_s} \nabla f_i(\bx^s)$. 
The size of $\tilcalB_s$, namely $\tilb_s$, increases with the index $s$ until it reaches $n$. 
By judiciously choosing $\tilb_s$, we can show that the resulting algorithm still enjoys linear convergence with rate $\barrho$, when integrated with the geometric sampling/averaging scheme in Section~\ref{sec:exp_samp_ave}. Before we formally state this result in Proposition~\ref{prop:subsamp_grad_outer}, we first make an assumption in addition  to Assumptions~\ref{assump:twice_diff} and \ref{assump:sc_sm}.

\begin{assump}\label{assump:as_bound}
The inner iterates $\{\bx_{s,t}\}_{s\ge 0,t\in[m]}$ generated by the modified algorithm in Section~\ref{sec:subsamp_grad_outer} are bounded almost surely, i.e., there exists $B<\infty$ such that for any $s\ge 0$ and $t\in[m]$, $\norm{\bx_{s,t}-\bx^*}\le B$. 
\end{assump}


\begin{prop}\label{prop:subsamp_grad_outer}
Let Assumptions~\ref{assump:twice_diff}, \ref{assump:sc_sm} and \ref{assump:as_bound} hold. 
For any $\xi>0$ and $S\in\bbN$, and for any $s\in(S]$, 
if we choose $\tilb_s \ge {\tilub}_s\defeq nS^2\alpha_s/(S^2\alpha_s +(n-1)\xi^2\barrho^{2s})$, where $\alpha_s \defeq {1}/{n}\sum_{i=1}^n \norm{\nabla f_i(\bx^s)}^2$, we have
\begin{align}
&\hspace{-.2cm}\bbE\left[f(\bx^{s})-f(\bx^*)\right] \le  \barrho^s\bigg\{\left(f(\bx^{0})-f(\bx^*)\right)\nn\\
&\hspace{-.2cm}+\left(1+\frac{1}{c'}\right)\frac{\xi b}{b-{4\Gamma\barL\eta}/{(1-\eta\gamma\barmu)}}\left(\kappa_\rmH^{1/2}B+{\eta\Gamma \xi}\right)\bigg\}. \label{eq:lin_conv_subsamp_grad}
\end{align}
\end{prop}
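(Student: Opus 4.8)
The plan is to re-run the coordinate-transformation argument of Theorem~\ref{thm:main} and Proposition~\ref{prop:exp_samp_ave}, treating the replacement of the full gradient $\bg_s=\nabla f(\bx^s)$ by the subsampled gradient $\tilbg_s$ as a controlled perturbation. Set $\be_s\defeq\tilbg_s-\nabla f(\bx^s)$, so that the modified variance-reduced gradient is $\bv_{s,t}=\nabla f_{\calB_{s,t}}(\bx_{s,t})-\nabla f_{\calB_{s,t}}(\bx^s)+\nabla f(\bx^s)+\be_s$. Since $\tilcalB_s$ is drawn at the start of outer iteration $s$, the quantity $\be_s$ is $\calF_{s,t}$-measurable, hence $\bbE_{\calB_{s,t}}[\bv_{s,t}\vert\calF_{s,t}]=\nabla f(\bx_{s,t})+\be_s$; and because adding the constant $\be_s$ leaves the conditional variance over $\calB_{s,t}$ unchanged, Lemma~\ref{lem:bound_var} still controls the zero-mean part and gives $\bbE_{\calB_{s,t}}[\norm{\bv_{s,t}-\nabla f(\bx_{s,t})}^2\vert\calF_{s,t}]\le(4\barL/b)(f(\bx_{s,t})-f(\bx^*)+f(\bx^s)-f(\bx^*))+\norm{\be_s}^2$. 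The only structural change relative to the unbiased analysis is therefore a nonzero conditional mean $\bbE_{\calB_{s,t}}[\tbdelta_{s,t,r}\vert\calF_{s,t}]=\bH_r^{1/2}\be_s$ of the noise term $\tbdelta_{s,t,r}$.

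First I would propagate this bias through the per-inner-iteration descent inequality \eqref{eq:succ_bound_sqEuc}. Upon taking the conditional expectation, the inner-product term $-2\eta\lrangle{\tbdelta_{s,t,r}}{\tilbx_{s,t,r}-\tilbx_r^*}$ no longer vanishes but leaves $-2\eta\lrangle{\bH_r^{1/2}\be_s}{\tilbx_{s,t,r}-\tilbx_r^*}$, while the variance term acquires an additional $2\eta^2\norm{\bH_r^{1/2}\be_s}^2$. Using the spectral bounds of Lemma~\ref{lem:spectral_bound} I would estimate $\norm{\bH_r^{1/2}\be_s}^2\le\Gamma\norm{\be_s}^2$ and, by Cauchy--Schwarz together with $\norm{\tilbx_{s,t,r}-\tilbx_r^*}=\norm{\bH_r^{-1/2}(\bx_{s,t}-\bx^*)}\le\gamma^{-1/2}B$ (Assumption~\ref{assump:as_bound}), bound the inner-product term by $2\eta\kappa_\rmH^{1/2}B\norm{\be_s}$, the factor $\kappa_\rmH^{1/2}=(\Gamma/\gamma)^{1/2}$ coming from the two-sided spectral bound. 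Carrying these two nonnegative residual terms through the geometrically weighted telescoping of Proposition~\ref{prop:exp_samp_ave} and normalizing exactly as there, I expect the clean contraction of that proposition to survive intact and to pick up an additive residual, yielding a one-step recursion of the form
\[
\bbE[f(\bx^{s+1})-f(\bx^*)\vert\calF_s]\le\barrho\big(f(\bx^s)-f(\bx^*)\big)+\barrho\,(1+1/c')\tfrac{b}{\,b-4\Gamma\barL\eta/(1-\eta\gamma\barmu)\,}\big(\kappa_\rmH^{1/2}B\,\norm{\be_s}+\eta\Gamma\,\norm{\be_s}^2\big).
\]

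Next I would control $\be_s$ through the prescribed size $\tilb_s$. Because $\tilcalB_s$ is sampled uniformly without replacement, the finite-population variance identity gives $\bbE[\norm{\be_s}^2\vert\calF_s]=\frac{n-\tilb_s}{\tilb_s(n-1)}\cdot\frac1n\sum_{i=1}^n\norm{\nabla f_i(\bx^s)-\nabla f(\bx^s)}^2\le\frac{(n-\tilb_s)\alpha_s}{\tilb_s(n-1)}$, where the last step drops the nonpositive $-\norm{\nabla f(\bx^s)}^2$. Solving $\frac{(n-\tilb_s)\alpha_s}{\tilb_s(n-1)}\le\xi^2\barrho^{2s}/S^2$ for $\tilb_s$ returns exactly the threshold $\tilb_s\ge\tilub_s$, so this choice yields $\bbE[\norm{\be_s}^2\vert\calF_s]\le\xi^2\barrho^{2s}/S^2$ and, by Jensen's inequality, $\bbE[\norm{\be_s}\vert\calF_s]\le\xi\barrho^s/S$.

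Finally I would take total expectations and unroll the recursion from $0$ to $s$. The homogeneous part contributes $\barrho^s(f(\bx^0)-f(\bx^*))$, while the $j$-th residual is discounted by $\barrho^{s-j}$; substituting the two bounds on $\be_j$ and using $\barrho<1$ to replace $\barrho^{2j}/S^2$ by $\barrho^j/S$, both the linear-in-$\norm{\be_j}$ and the quadratic-in-$\norm{\be_j}$ contributions collapse to $\barrho^s\,(s/S)$ times an $S$-independent constant, which is at most that constant since $s\le S$. Collecting the two contributions under the common factor $\barrho^s(1+1/c')\tfrac{\xi b}{\,b-4\Gamma\barL\eta/(1-\eta\gamma\barmu)\,}\big(\kappa_\rmH^{1/2}B+\eta\Gamma\xi\big)$ reproduces \eqref{eq:lin_conv_subsamp_grad}. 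The main obstacle is the middle step: the bias $\be_s$ destroys the martingale property that made the inner-product term vanish in Theorem~\ref{thm:main}, so the crux is to route the resulting drift through the geometric telescoping so that each per-step residual inherits the $\barrho^s/S$ decay and the accumulated error remains bounded by an $S$-independent constant.
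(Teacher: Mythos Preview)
Your proposal is correct and mirrors the paper's own proof almost step for step: introduce $\be_s=\tilbg_s-\nabla f(\bx^s)$, carry it as a bias through the coordinate-transformed descent inequality of Proposition~\ref{prop:exp_samp_ave}, bound the surviving inner product via Cauchy--Schwarz together with Assumption~\ref{assump:as_bound} and Lemma~\ref{lem:spectral_bound}, control $\bbE\norm{\be_s}^2$ by the finite-population variance formula (Lemma~\ref{lem:uniform_samp_worep}) and the choice of $\tilb_s$, and unroll using $s\le S$. The only cosmetic difference is that you retain an extra factor $\barrho$ on the residual in the one-step recursion, which in fact makes the final summation $\sum_{j=0}^{s-1}\barrho^{s-j}\barrho^j=s\barrho^s$ and the bound $s/S\le 1$ go through cleanly.
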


\begin{remark}\label{rmk:part_grad}
Several remarks are in order. First, we remark that assumptions involving almost sure boundedness of iterates (e.g., Assumption~\ref{assump:as_bound}) 
are commonplace in the stochastic optimization literature~\cite{Borkar_08,Hu_09,Reza_15} and are always observed to hold in   experiments. Second, under this assumption, we can show that $\{\alpha_s\}_{s\ge 0}$ are bounded almost surely using the Lipschitz continuity of $\nabla f_i$, for any $i\in[n]$ in Assumption~\ref{assump:sc_sm}. Consequently, there exists $B'<\infty$ such that $\alpha_s\le B'$ for any $s\ge 0$ and hence 
\begin{align}
\tilub_s &\le \frac{nS^2B'}{S^2B' +(n-1)\xi^2\barrho^{2s}} \label{eq:bound_tilbs1}\\
&\le \frac{nS^2B'}{(n-1)\xi^2}\left(\barrho^{-2}\right)^s. \label{eq:bound_tilbs2}
\end{align}
As a sanity check, we observe that \eqref{eq:bound_tilbs1} increases to $n$ as $s\to\infty$. By further upper bounding \eqref{eq:bound_tilbs1} by \eqref{eq:bound_tilbs2}, we obtain a practical rule to select $\tilb_s$. Namely, it suffices to choose $\tilb_s = \min\{\zeta\upsilon^s,n\}$, for some constants $\zeta>0$ and $\upsilon>1$. As shown in Section~\ref{sec:experiments}, this rule works well in practice. Third, for any $\epsilon>0$, we can choose $S\in\bbN$ such that
 our algorithm achieves $\epsilon$-suboptimality, i.e., 
 $\bbE\left[f(\bx^{S})-f(\bx^*)\right]< \epsilon$. 
\end{remark}

\subsection{Low-dimensional Approximate Hessians}\label{sec:low_dim_Hessian}
In additional to high dimensionality and large size, {\em sparsity} is also a typical attribute for modern data, i.e., many feature vectors 
only have a few nonzero entries. For ERM problems~\eqref{eq:ERM} (with Tikhonov regularization), this implies that the Hessian of $f$ in~\eqref{eq:problem},
\begin{equation}
\nabla^2 f(\bx) = \frac{1}{n}\sum_{i=1}^n \ell''(\ba_i^T\bx,b_i)(\ba_i\ba_i^T)+\lambda\bI,\label{eq:Hess_ERM}
\end{equation}
tends to be sparse. Based on this observation, we propose a strategy that aims to approximate 
$\nabla^2 f(\bx)$ {by} several smaller Hessian matrices and  update them efficiently. 
For sparse data, collecting curvature information via smaller dense Hessians can be more effective than {directly manipulating the} high-dimensional sparse Hessian~\cite{Nocedal_06}. As a result, the algorithm converges faster in practice (see Figure~\ref{fig:lowdim_Hess}). 

Before describing our strategy, we first introduce some notations. 
We partition $[n]$ into $K$ groups, 
and denote the set of partitions as $\calP\defeq\{\calP_1,\ldots,\calP_K\}$. For any $i\in[K]$, we define $\calS_i\defeq \cup_{j\in\calP_i}\supp(\ba_j)$, where $\supp(\ba_j)$ denotes the support of the vector $\ba_j$. 
We define $d_i\defeq \abs{\calS_i}$ and denote the elements in $\calS_i$ as $\{s_{i,1},\ldots,s_{i,d_i}\}$. 
We also define $F_i = \sum_{j\in\calP_i}f_j$ so that $f = \frac{1}{n}\sum_{i=1}^K F_i$.  
We define a {\em projection matrix} 
$\bU_i\in\bbR^{d_i\times d}$ such that for any $p\in[d_i]$ and $q\in[d]$, $u_{pq} = 1$ if $q = s_{i,p}$ and $0$ otherwise. Accordingly, for any $l\in\calP_i$, define a function $\phi_{i,l}:\bbR^{d_i}\to\bbR$ such that $f_l\defeq \phi_{i,l}\circ\bU_i$. Note that $\phi_{i,l}$ is uniquely defined by the definition $\bU_i$. Also define $\phi_i\defeq \sum_{l\in\calP_i}\phi_{i,l}$. 
 Therefore, 
\begin{align}
\nabla^2 f(\bx) 
= \frac{1}{n}\sum_{i=1}^K\bU_i^T\nabla^2 \phi_i(\bU_i\bx)\bU_i, \;\forall\,\bx\in\bbR^d.  \label{eq:sum_lowdim_Hess}
\end{align}
We now describe our strategy. 
In Algorithm~\ref{algo:SLBFGS-nonuniform}, for  any $i\in[K]$ and any $j\in\{r-M'+1,\ldots,r\}$, define correction pairs $\bs_{j,i}\!\!\defeq\!\! \bU_i(\barbx_{j}-\barbx_{j-1})$ and 
$\by_{j,i}\!\! \defeq\!\! \sum_{l\in\calT_{r,i}}\!\!\nabla^2 \phi_{i,l} (\bU_i\barbx_j)\bs_{j,i}$, 
where $\calT_{r,i}$ with size $b_\rmH/K$ is uniformly sampled from $\calP_i$. 
 Accordingly, define $\bS_{r,i}\defeq [\bs_{r-M'+1,i},\ldots,\bs_{r,i}]$, $\bY_{r,i}\defeq [\by_{r-M'+1,i},\ldots,\by_{r,i}]$. 
Instead of storing $\calH_r$, we only store matrices $\{(\bS_{r,i},\bY_{r,i})\}_{i=1}^K$. 
To reconstruct approximation $\bB_{r,i}$ for each $\nabla^2 \phi_i$ at $\bU_i\barbx_r$, as usual, we apply $M'$ BFGS updates~\eqref{eq:update_B} (using the correction pairs stored in $\bS_{r,i}$ and $\bY_{r,i}$) to $\bB_{r,i}^{(0)}\defeq \delta_{r,i}\bI$, where $\delta_{r,i}\!\defeq\! \normt{\by_{r,i}}^2/{\bs_{r,i}^T\by_{r,i}}$. This procedure can be implemented {\em efficiently} via the method of {\em compact representation}~\cite{Nocedal_06}, i.e., 
\begin{align}
\bB_{r,i} &= \delta_{r,i}\bI - \bW_{r,i}\bM_{r,i}^{-1}\bW_{r,i}^T,\label{eq:compact_rep}\\
\bM_{r,i} &\defeq \begin{bmatrix}
\delta_{r,i}{\bS_{r,i}}^T\bS_{r,i} &\bL_{r,i}\\
{\bL_{r,i}}^T& -\bD_{r,i}
\end{bmatrix},
\end{align}
where $\bW_{r,i}\!\defeq\! [\delta_r^{(i)}{\bS_r^{(i)}},\bY_r^{(i)}]$ and $\bL_{r,i}$ and $\bD_{r,i}$ are the lower triangular matrix (excluding diagonal) and diagonal matrix of $\bS_{r,i}^T\bY_{r,i}$ respectively. Analogous to \eqref{eq:sum_lowdim_Hess}, we define the approximation of $\nabla^2 f$ at $\bU_i\barbx_r$, denoted as $\bB_r$, as 
\begin{equation}
 \bB_r \defeq \frac{1}{n}\sum_{i=1}^K \bU_i^T\bB_{r,i}\bU_i.\label{eq:def_B_r} 
\end{equation}
We remark that the strong convexity of each function $f_i$ (see Assumption~\ref{assump:sc_sm}), together with the full-row-rank property of $\bU_i$, ensures $\nabla^2 \phi_i\!\succ\! 0$ on $\bbR^{d_i}$. This implies {\em positive curvature} $\bs_{r,i}^T\by_{r,i}>0$ and hence the positive definiteness of $\bB_{r,i}$, for any $r\!\ge\! 0$ and $i\!\in\![K]$. As a result, $\bB_r\!\succ\! 0$ on $\bbR^d$. 
This suggests the usage of the {\em conjugate gradient} (CG) method to compute the search direction at time $(s,t)$, namely $\bp_{s,t}\defeq -\bB_r^{-1}\bv_{s,t}$, via solving the positive definite system $\bB_r\bp_{s,t}=-\bv_{s,t}$. In particular, for any $\bz\in\bbR^d$, $\bB_r\bz$ and $\bz^T\bB_r\bz$ in CG can be computed very efficiently using \eqref{eq:compact_rep} and \eqref{eq:def_B_r}. For example, 
\begin{align}
\bz^T\bB_r\bz=\frac{1}{n}\sum_{i=1}^K \delta_{r,i}\norm{\bz_i}^2 -(\bW_{r,i}^T\bz_i)^T
\bM_{r,i}^{-1}(\bW_{r,i}^T\bz_i),\label{eq:quad_form}\vspace{-.5cm}
\end{align} 
where $\bz_i\defeq \bU_i\bz$. We observe that the total computational cost in \eqref{eq:quad_form} is $O(M'({M'}^2+d'))$, where $d'\defeq \sum_{i=1}^K d_i$. For sparse data, usually $d'=O(d)$, so this cost is still linear in $d$. In addition, 
we can compute \eqref{eq:quad_form} {\em in  parallel} across $i\in[K]$. 
(Intuitively, this amounts to {\em collecting curvature} from each function $\phi_i$ in parallel.)
In this case, the computational time will be greatly reduced to $O(M'({M'}^2+\max_{i}d_i))$. Since typically $\max_{i}d_i \ll d$, 
the computational savings from {\em parallel curvature collection} can be significant.\footnote{The memory parameter $M$ (note that $M'\le M$) is usually set to a small constant, e.g., 5 or 10. Thus it has less effect on the computational complexity compared to $d$ or $\max_{i}d_i$.}

\begin{remark}\label{rmk:Hessian_sketch}
Note that if we interpret the matrices $\{\bU_i^T\}_{i\in[K]}$ as {\em sketching matrices}~\cite{Woodruff_14}, then the acceleration technique in Section~\ref{sec:low_dim_Hessian} can be regarded as a way of performing (approximate) {\em Hessian sketching}. However, most existing methods in the literature~\cite{Pilanci_16,Luo_16,Gower_16,Pilanci_17} either use random sketching matrices or the (deterministic) frequent directions approach~\cite{Ghash_16}, which is based on the singular value decomposition. A certain amount of information contained in the Hessian is lost or modified in these sketching processes. In contrast, by using the sparse binary matrices $\{\bU_i^T\}_{i\in[K]}$, our approach merely (deterministically) compresses the large sparse Hessian matrix into $K$ small dense Hessians, without changing any information contained therein. 
\end{remark}


\begin{remark}
In~\cite{Gower_16}, the authors proposed another acceleration strategy called the {\em block BFGS update}~\cite{Gower_14,Hennig_15}. We remark that this strategy can be straightforwardly combined with all the other acceleration strategies proposed above, and may result in further acceleration of the convergence of Algorithm~\ref{algo:SLBFGS-nonuniform}.  
\end{remark}


\section{Numerical Experiments}\label{sec:experiments}

\subsection{Experimental Setup}\label{sec:setup}
We consider two ERM problems, including logistic regression (with Tikhonov regularization) and ridge regression. For logistic regression, $b_i\in\{-1,1\}$ and 
\begin{equation}
f_i^{\log}(\bx) \defeq \log\left(1+e^{-b_i(\ba_i^T\bx)}\right)+\frac{\lambda}{2} \norm{\bx}^2, \;\forall\,i\in[n].  \label{eq:logit}
\end{equation}
For ridge regression, $b_i\in\bbR$ and
\begin{equation}
f_i^{\rid}(\bx) \defeq \left(\ba_i^T\bx-b_i\right)^2+\frac{\lambda}{2} \norm{\bx}^2, \;\forall\,i\in[n].  \label{eq:ridge}
\end{equation}
Accordingly, define $f^{\log}\!\!\defeq \!\frac{1}{n}\!\sum_{i=1}^n f_i^{\log}$ and $f^{\rid}\!\!\defeq\! \frac{1}{n}\!\sum_{i=1}^n f_i^{\rid}$. 
Simple calculations reveal that the smoothness parameters $L_i$ of $f_i^{\log}$ and $f_i^{\rid}$ 
are given by $\norm{\ba_i}^2\!\!\!/4+\!\lambda$ and $2\norm{\ba_i}^2\!+\!\lambda$, respectively. Define the data matrix $\bA\defeq [\ba_1,\ldots,\ba_n]$. 
The condition numbers $\kappa$ of $f^{\log}$ and $f^{\rid}$ 
are given by $(\frac{1}{4n}\sigma_{\max}^2(\bA)+\lambda)/\lambda$ and $({2}\sigma_{\max}^2(\bA)\!+\!n\lambda)/({2}\sigma_{\min}^2(\bA)+n\lambda)$ respectively, where $\sigma_{\max}(\bA)$ and $\sigma_{\min}(\bA)$ denote the largest and smallest singular values of $\bA$ respectively.  
In both \eqref{eq:logit} and \eqref{eq:ridge}, we choose $\lambda=1/n$, following the convention in the literature (e.g.,~\cite{Gower_16}). 

We tested logistic and ridge regression problems on {\tt rcv1.binary} and {\tt E2006-tfidf} datasets respectively~\cite{Chang_11}. (In the sequel we abbreviate them as {\tt rcv1} and {\tt E2006-tf}.) 
From \eqref{eq:Hess_ERM}, we defined a sparsity estimate of $\nabla^2 f$ at any $\bx\in\bbR^d$ as $\varrho\defeq \abs{\supp(\bA\bA^T+\bI)}/d^2$. 
The statistics of both datasets, including the (ambient) data dimension $d$, number of data samples $n$, sparsity parameter $\varrho$ and condition number $\kappa$ (of $f^{\log}$ or $f^{\rid}$ defined by the datasets), are summarized in Table~\ref{table:first_two}.\footnote{The data dimension $d$ for the original {\tt E2006-tf} dataset is 150360. Due to memory issues, we randomly subsampled its features so that $d = 15036$.} From it, we observe that both datasets are large-scale and sparse, but with different $d$-to-$n$ ratios and condition numbers. Through these differences, we are able to infer the reasons for the different performances of some acceleration strategies on different ERM problems (shown in Section~\ref{sec:perf_acc}).

For both datasets, the norms of all feature vectors $\{\ba_i\}_{i=1}^n$ have been normalized to unit. Since the smoothness parameters $L_i$ for both ERM problems are only  dependent on $\norm{\ba_i}$ and $\lambda$, we have $L_i=L_j$ for any $i,j\in[n]$. Therefore the nonuniform distribution $P$ in Section~\ref{sec:algo} becomes uniform, and the merit of nonuniform sampling of $\calB_{s,t}$ cannot be observed.
 
To estimate the global optimum $\bx^*$ as ground truth, we used batch L-BFGS-B algorithm~\cite{Zhu_97}.
We randomly initialized $\bx^0$ according to a scaled standard normal distribution. (The performance of our algorithms were observed to be insensitive to the initialization of $\bx^0$.) We used the number of data passes (i.e., number of data points accessed divided by $n$), rather than running time, to measure the convergence rates of all the algorithms under comparison. This has been a well-established convention in the literature on both stochastic first-order~\cite{Johnson_13,Defazio_14,Reza_15} and second-order methods~\cite{Byrd_16a,Moritz_16,Gower_16} to make convergence results agnostic to the actual implementation of the algorithms, e.g., programming languages.\footnote{Specifically, our algorithm (Algorithm~\ref{algo:SLBFGS-nonuniform}) was implemented in Matlab\textsuperscript\textregistered. However, some benchmarking algorithms (see Section~\ref{sec:comp_other_algo}) were implemented in other languages, e.g., the SVRG algorithm~\cite{Johnson_13} and the stochastic L-BFGS algorithm~\cite{Moritz_16} were implemented in C++ and Julia respectively. The differences in programming language typically have a significant impact on the actual running time of algorithms.} 

Finally we describe the parameter setting. We set the mini-batch size $b=\sqrt{n}$, Hessian update period $\Upsilon=10$ and the memory parameter $M=10$. We set $b_\rmH=b\Upsilon$ so that the computation for $\by_r$ can be amortized to each inner iteration. We set the number of inner iterations $m=n/b$, so that each outer iteration will access $2n$ data points. Lastly, we set $\eta=1\times10^{-2}$. From Figure~\ref{fig:etas}, we observe that when $\eta$ is too large, e.g., $\eta=0.1$, Algorithm~\ref{algo:SLBFGS-nonuniform} only converges sublinearly; whereas when $\eta$ is too small, e.g., $\eta=1\times10^{-3}$, Algorithm~\ref{algo:SLBFGS-nonuniform} converges linearly but slowly. This corresponds well to our theoretical analysis in Theorem~\ref{thm:main}, which indicates that when $\eta$ falls below a threshold, $\rho$ increases as $\eta$ decreases. For both ERM problems, we see that $\eta=1\times10^{-2}$ yields fast linear convergence. 



\begin{table}\centering
\caption{Statistics of {\tt rcv1} and {\tt E2006-tf} datasets.\label{table:first_two}}
\begin{tabular}{|c|c|c|c|c|c|}\hline
Datasets & $d$ &$n$ & $d/n$ & $\varrho$ & $\kappa$\\\hline
{\tt rcv1} & 47236 & 20242 & 2.33 & 0.0154 & 113.17\\\hline
{\tt E2006-tf} & 15036 &16087 & 0.93 & 0.0404 & 1.70 \\\hline
\end{tabular}
\end{table}

\subsection{Performance of Acceleration Strategies}\label{sec:perf_acc}

We first examine the performance of Algorithm~\ref{algo:SLBFGS-nonuniform} with different schemes of choosing $\bx^{s+1}$. We consider five schemes in total, including (a) uniform sampling (option I), (b) uniform averaging (option II), (c) geometric sampling (option III), (d) geometric averaging (option IV) and (e) last inner iterate (in Remark~\ref{rmk:exp_samp_ave}). For options III and IV, we set $\beta=1/2$. From Figure~\ref{fig:geom_outer_itr}, we observe that options III and IV perform as well as the ``last inner  iterate'' scheme, on both ERM problems, and outperform the schemes based on uniform sampling/averaging {significantly}. For all the {subsequent} experiments, we use option~IV to select $\bx^{s+1}$. 

We next compare the performance of Algorithm~\ref{algo:SLBFGS-nonuniform} with and without using the subsampled gradient stabilization strategy in Section~\ref{sec:subsamp_grad_outer}. As suggested by Remark~\ref{rmk:part_grad}, we chose $\tilb_s=\min\{\zeta\upsilon^s,n\}$, where $\zeta= n/\upsilon^q$, $\upsilon=3$ and $q=8$. That is, the number of component gradients in $\tilbg_s$ exponentially increases in the first $p=8$ outer iterations and then remains at $n$. From Figure~\ref{fig:part_grad}, we observe that this simple parameter selection method works well on both ERM problems, especially in the initial phase (when $s$ is small). 
In addition, we also observe when $s$ is large, both algorithms have almost the same (linear) convergence rates. This corroborates our analysis in Proposition~\ref{prop:subsamp_grad_outer}. 

We finally compare the performance of Algorithm~\ref{algo:SLBFGS-nonuniform} with and without using the low-dimensional approximate Hessian strategy in Section~\ref{sec:low_dim_Hessian}. We set the number of partitions $K=5$ and partition $[n]$ evenly and randomly. From Figure~\ref{fig:lowdim_Hess}, we observe that our strategy leads to improvements of convergence on both logistic and ridge regression problems, and the improvement on the latter is {\em very significant}.  It could be possible that the smaller condition number of the {\tt E2006-tf} dataset enables more effective curvature collection by low-dimensional Hessians. Nevertheless, for the {\tt rcv1} dataset, which has a large condition number, our strategy is still efficacious. 
Additionally, we observe that our strategy {\em preserves {the} linear convergence} of Algorithm~\ref{algo:SLBFGS-nonuniform} on both problems. (A theoretical analysis of this linear convergence is left to future work.) 
Figure~\ref{fig:lowdim_Hess} shows the performance of our strategy in a single-threaded mode; as discussed in Section~\ref{sec:low_dim_Hessian}, our strategy can be much more efficient under scenarios where parallel computational resources are available.

\subsection{Comparison to Other Algorithms}\label{sec:comp_other_algo}
We combined all of our acceleration strategies in Section~\ref{sec:acc_strategy} and compared the resulting algorithm with three benchmarking algorithms, including SVRG in \cite{Johnson_13} (with mini-batch sampling of $\calB_{s,t}$) and two state-of-the-art stochastic L-BFGS algorithms in \cite{Moritz_16} and \cite{Gower_16}. For the algorithm in \cite{Gower_16}, we focused on its variant~(b), since it consistently outperformed other variants in experiments. 
We tested all the three benchmarking algorithms on both ERM problems with different step sizes $\eta\in\{10^{-4},10^{-3},\ldots,1\}$ and selected the best $\eta$ for each algorithm. The outer iterate $\bx^{s+1}$ in all these algorithms were selected via ``the last inner iterate'' scheme. 
From Figure~\ref{fig:diff_algo}, we observe that on both ERM problems, our algorithm yields faster convergence compared to all the benchmarking algorithms. 
This is due to the incorporation of the acceleration strategies in Sections~\ref{sec:subsamp_grad_outer} and~\ref{sec:low_dim_Hessian}. 
The improvement of convergence is particularly significant on the ridge regression problem. This observation is consistent with our observations in Section~\ref{sec:perf_acc}. 
In addition, we indeed observe that with the aid of curvature information, all the stochastic L-BFGS methods outperform the stochastic first-order method SVRG.

\section{Future work and an open problem }\label{sec:future_work}  
We propose to pursue future work in the following two directions.
First, we aim to develop and analyze {\em proximal} and {\em momentum-based accelerated} stochastic L-BFGS algorithms, based on our coordinate transformation framework. The proximal variant enables our algorithm to be applied to composite nonsmooth (convex) objective function. 
The accelerated variant can potentially improve the linear convergence rate of our algorithm, and thus reduce the total computational complexity. 
Second, we aim to analyze the convergence of the strategy in Section~\ref{sec:low_dim_Hessian}. 
In particular, Figure~\ref{fig:lowdim_Hess} suggests that Algorithm~\ref{algo:SLBFGS-nonuniform} may still converge linearly under this strategy. 

Besides future work, there is also an open problem we hope to resolve. Although we have improved the linear convergence rate and computational complexity of Algorithm~\ref{algo:SLBFGS-nonuniform} as compared to those in \cite{Moritz_16} and \cite{Gower_16},  it seems our improved complexity in \eqref{eq:C} is still inferior to that of SVRG. In SVRG, the complexity is $O((n+\kappa)d\log(1/\epsilon))$~\cite{Xiao_14}, so our complexity~\eqref{eq:C} has an additional multiplicative factor $\kappa_\rmH$. This contradicts the experimental results in \cite{Moritz_16}, \cite{Gower_16} and Section~\ref{sec:comp_other_algo}, where stochastic L-BFGS-type algorithms have been repeatedly shown to outperform their first-order counterparts. {\em Therefore,  an interesting problem consists in  obtaining a (computational) complexity bound of the stochastic L-BFGS algorithm that is better (or at least as good as) that of SVRG. }
Indeed, a careful analysis   reveals that the additional $\kappa_\rmH$ arises from the uniform spectral bound of the metric matrices $\{\bH_r\}_{r\ge 0}$. This uniform bound is effectively a worst-case bound, and does not reflect the {\em local curvature information} contained in recent iterates at any time $(s,t)$. Since the judicious use of curvature information serves as a very important reason for the fast convergence of the stochastic quasi-Newton algorithms, 
such information should also be reflected in theoretical analysis as well (possibly in an adaptive spectral bound for $\{\bH_r\}_{r\ge 0}$). We believe an effective adaptive bound is critical for improving the   complexity result in~\eqref{eq:C}. 

Interestingly, an {\em incremental} quasi-Newton (IQN) method was proposed by Mokhtari~{\em et al}.~\cite{Mokh_17} recently. The proposed algorithm makes use of the aggregated optimization variables, as well as the aggregated gradients and approximate Hessians of all the component functions to reduce  the noise of gradient and Hessian approximations. As a result, it achieves the local superlinear convergence rate, but requires $\Theta(nd^2)$ storage space. (Note that most of the stochastic L-BFGS methods, such as~\cite{Byrd_16a,Moritz_16} and Algorithm~\ref{algo:SLBFGS-nonuniform}, only require $\Theta(d)$ memory.) 
The key idea in~\cite{Mokh_17} is to show the descent direction in the IQN algorithm asympotically converges to that of the Newton's method. If this condition holds, then the additional factor $\kappa_\rmH$ in the complexity estimate~\eqref{eq:C} may be removed. Therefore, how to design a stochastic quasi-Newton algorithm that satisfies this condition in the {\em momeory-limited} setting would also be an interesting problem to pursue in the future. 

\begin{figure}[t]\centering
\subfloat[logistic regression ({\tt rcv1})]{\includegraphics[width=.5\columnwidth,height=.4\columnwidth]{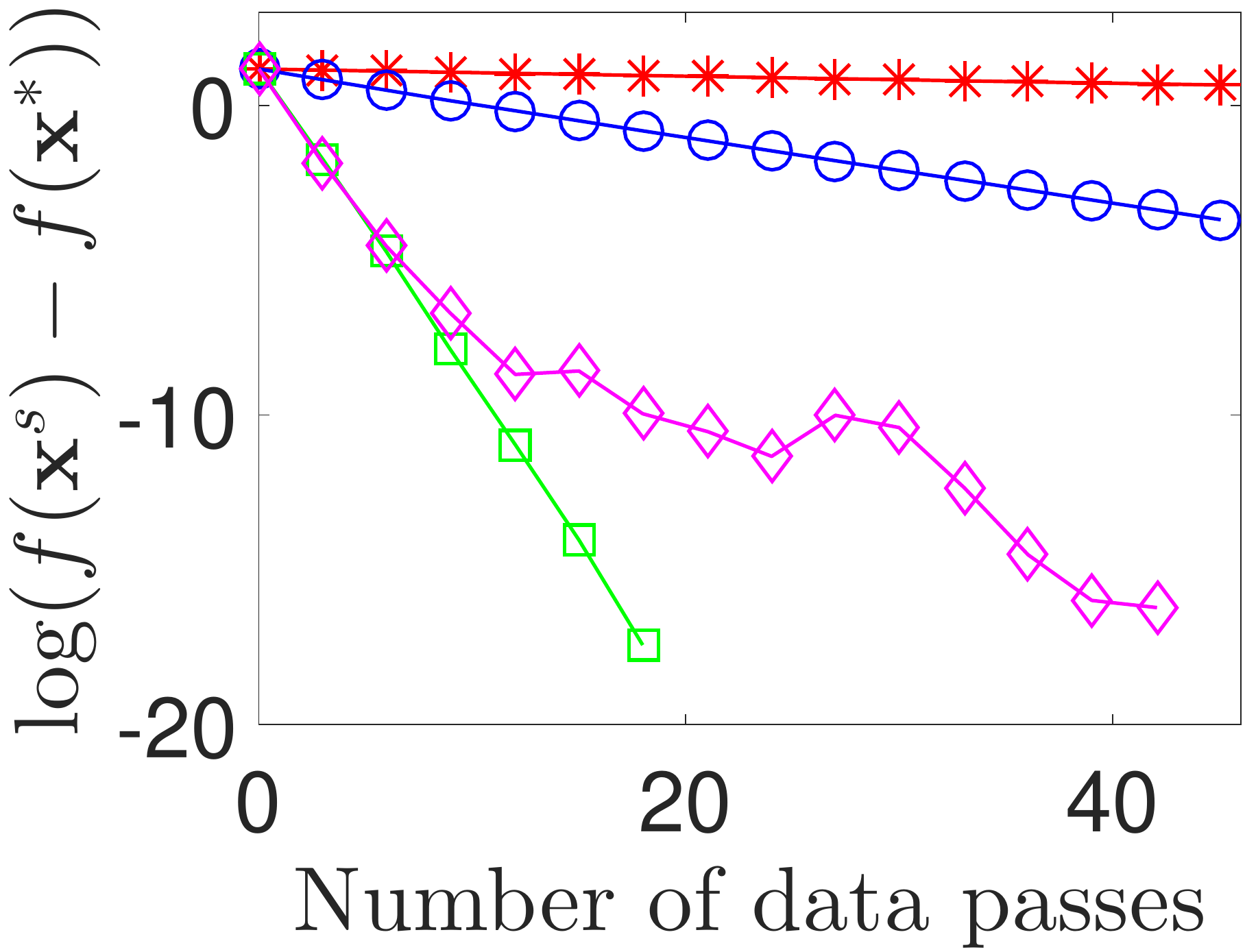}}\hfill
\subfloat[ridge regression ({\tt E2006-tf})]{\includegraphics[width=.47\columnwidth,height=.4\columnwidth]{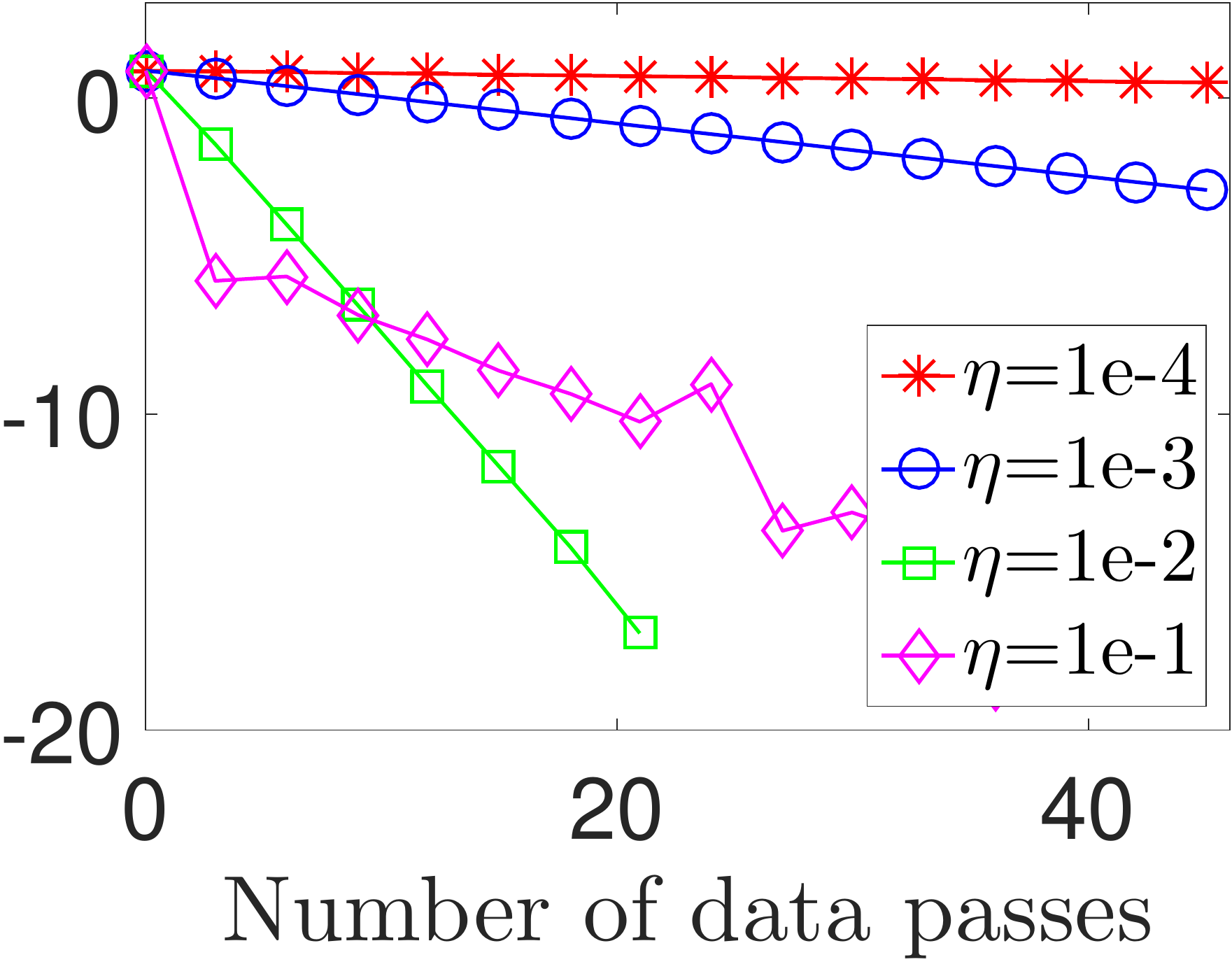}}
\caption{Log suboptimality versus number of passes through data of Algorithm~\ref{algo:SLBFGS-nonuniform} with different step sizes $\eta$. \label{fig:etas}}
\end{figure}

\begin{figure}[t]\centering
\subfloat[logistic regression ({\tt rcv1})]{\includegraphics[width=.5\columnwidth,height=.4\columnwidth]{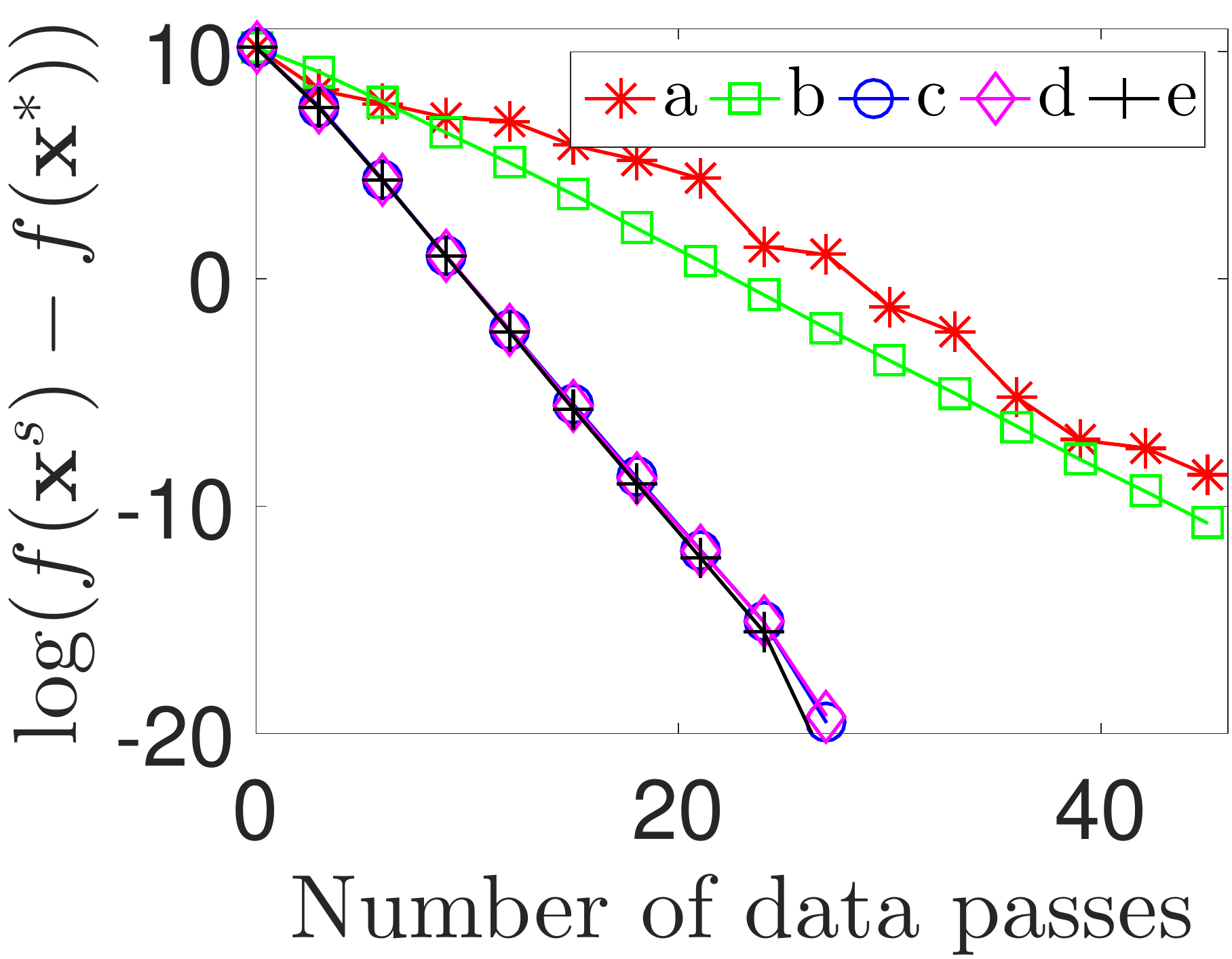}}\hfill
\subfloat[ridge regression ({\tt E2006-tf})]{\includegraphics[width=.47\columnwidth,height=.4\columnwidth]{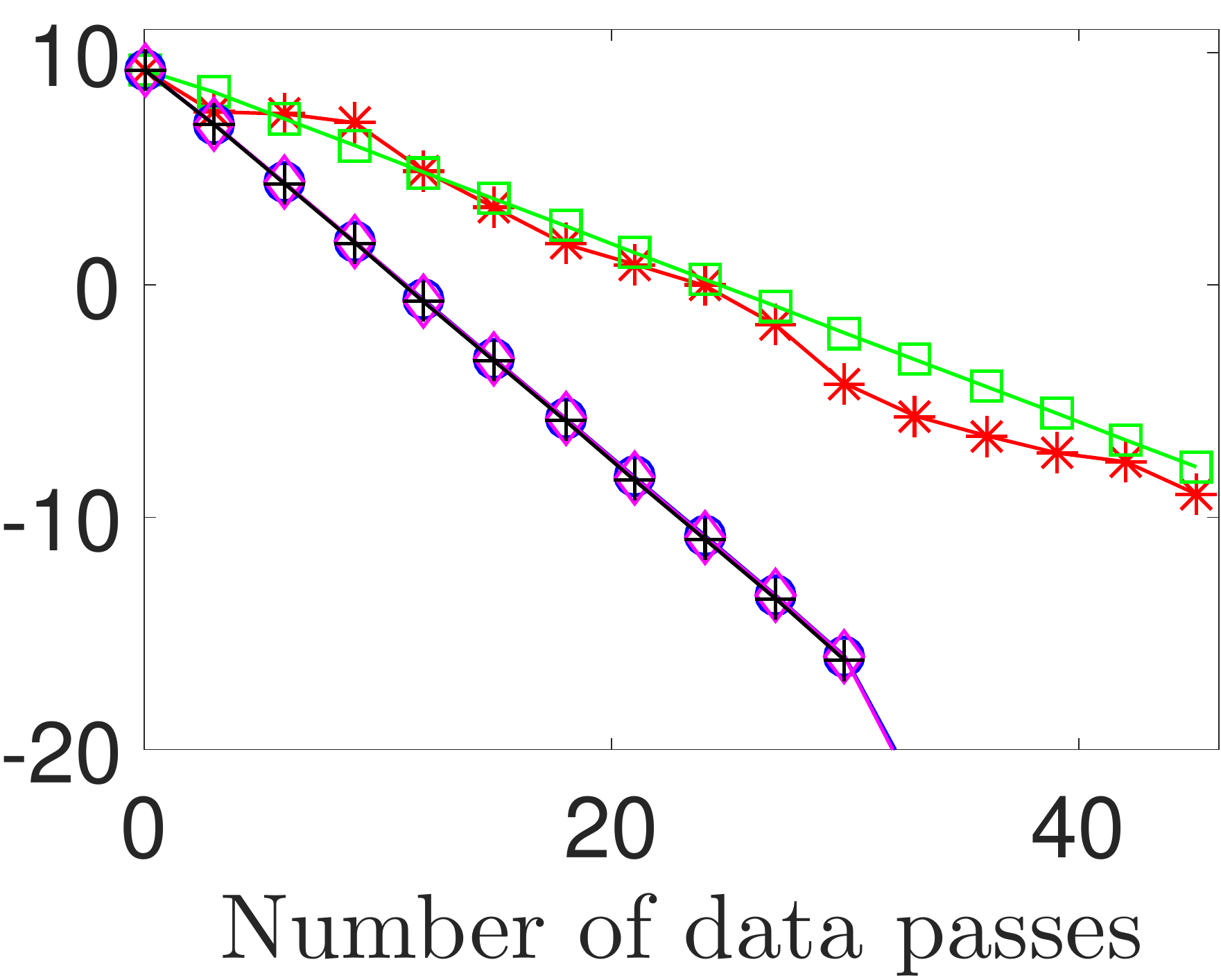}}
\caption{Comparison of Algorithm~\ref{algo:SLBFGS-nonuniform} with different selection schemes for $\bx^{s+1}$.\label{fig:geom_outer_itr} }
\end{figure}

\begin{figure}[t]\centering
\subfloat[logistic regression ({\tt rcv1})]{\includegraphics[width=.5\columnwidth,height=.4\columnwidth]{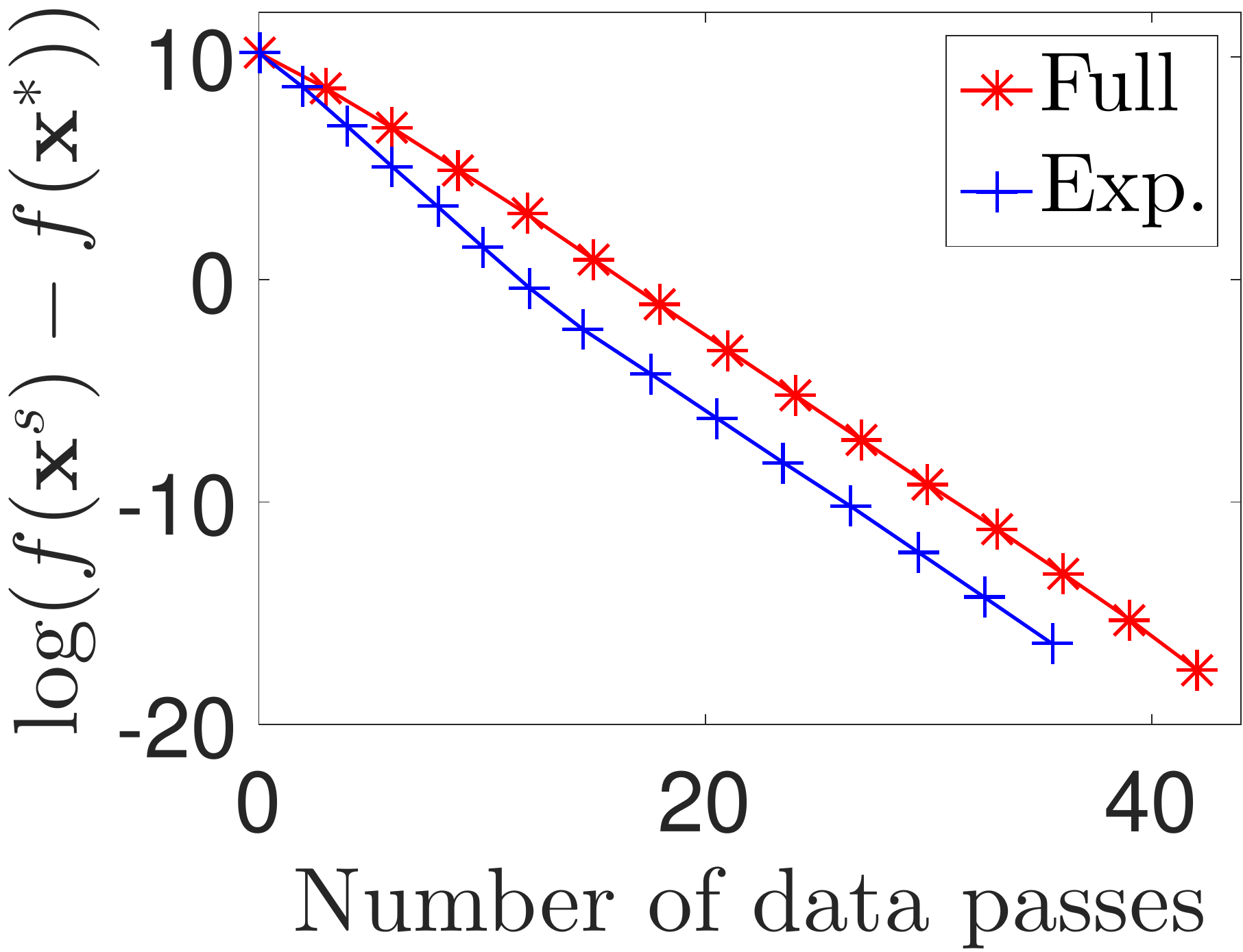}}\hfill
\subfloat[ridge regression ({\tt E2006-tf})]{\includegraphics[width=.47\columnwidth,height=.4\columnwidth]{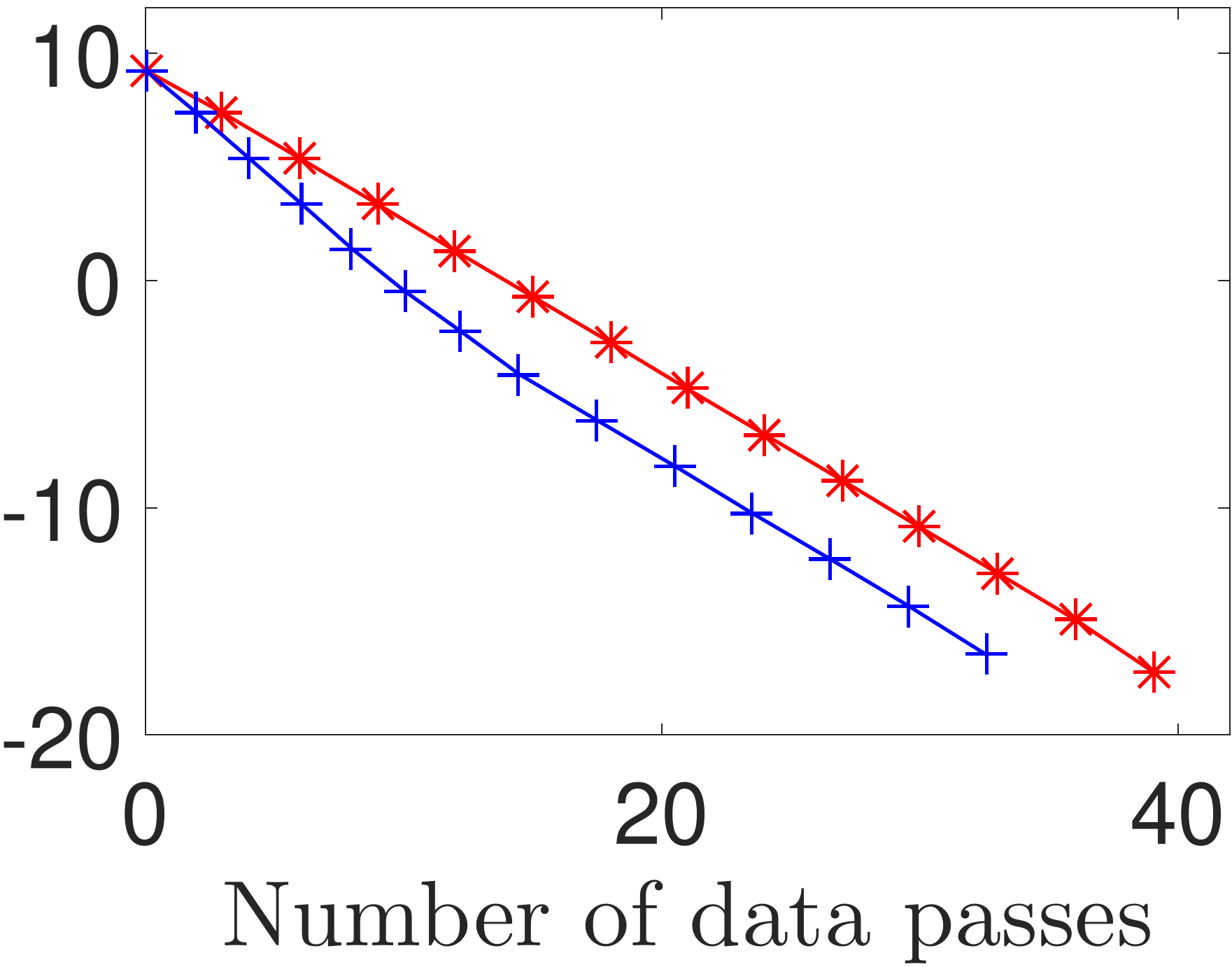}}
\caption{Comparison of Algorithm~\ref{algo:SLBFGS-nonuniform} without (Full) and with (Exp.) using the partial gradient statblization strategy in Section~\ref{sec:subsamp_grad_outer}.\label{fig:part_grad} }
\end{figure}

\begin{figure}[t]\centering
\subfloat[logistic regression ({\tt rcv1})]{\includegraphics[width=.5\columnwidth,height=.4\columnwidth]{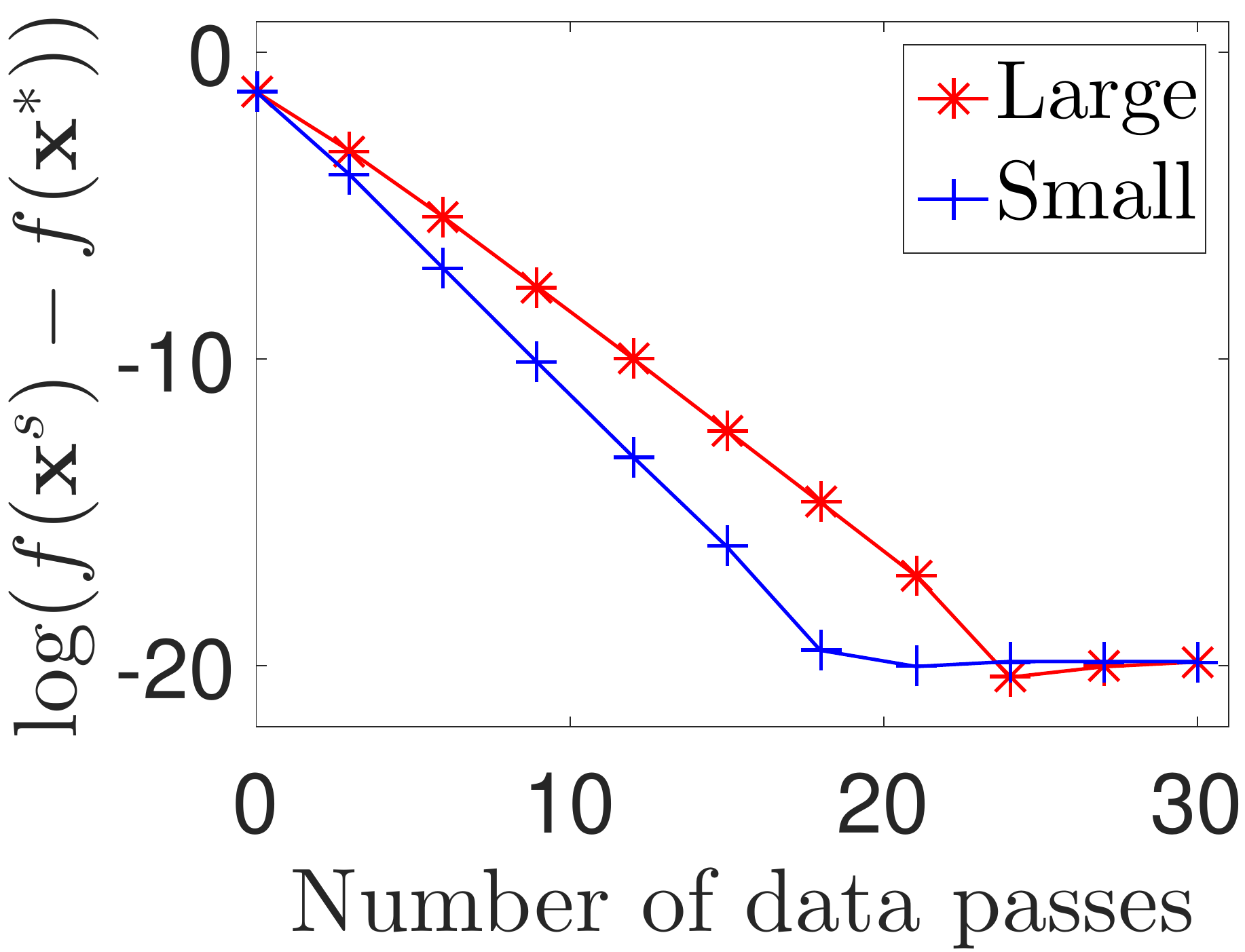}}\hfill
\subfloat[ridge regression ({\tt E2006-tf})]{\includegraphics[width=.47\columnwidth,height=.4\columnwidth]{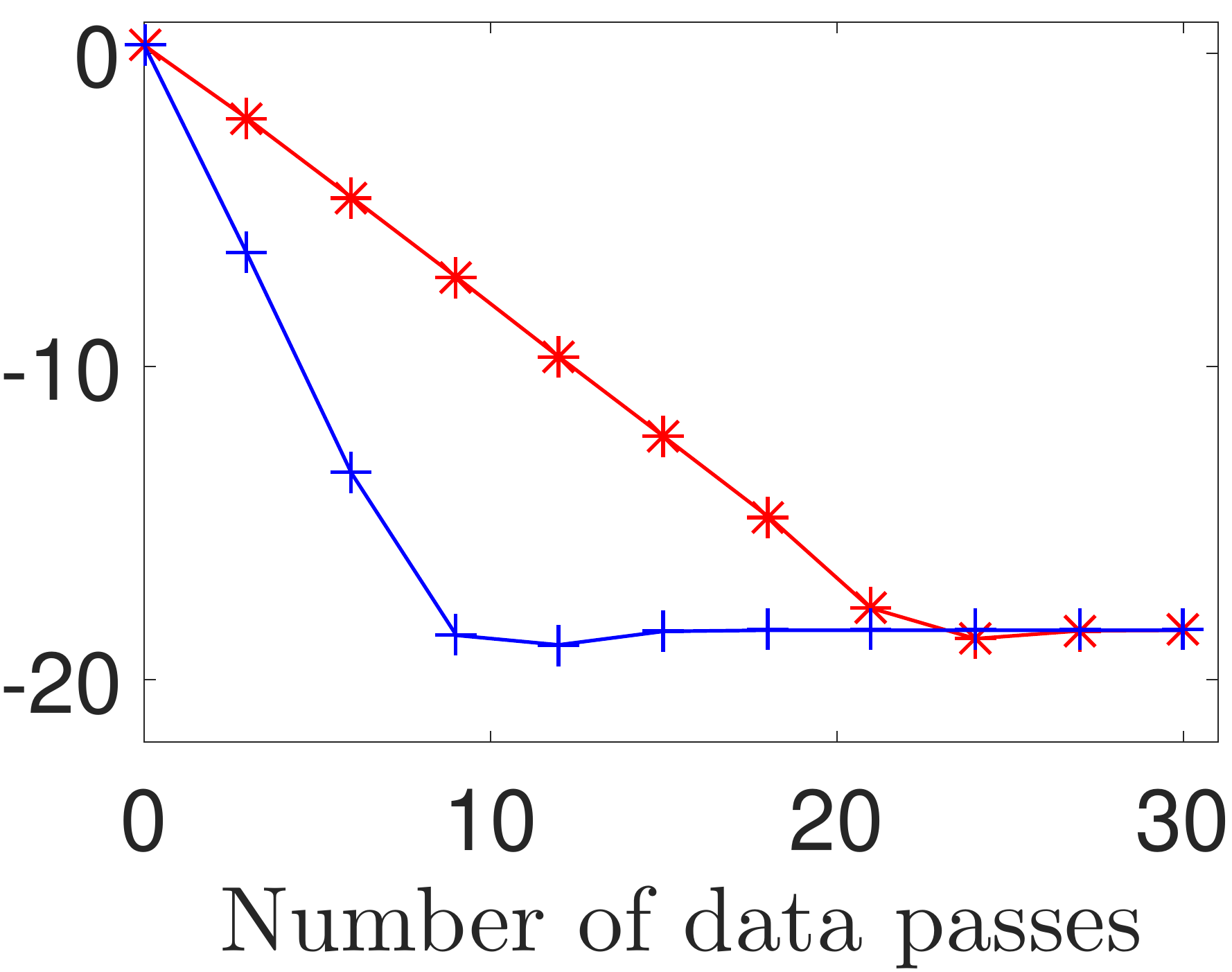}}
\caption{Comparison of Algorithm~\ref{algo:SLBFGS-nonuniform} without (Large) and with (Small) using the low-dimensional Hessian strategy in Section~\ref{sec:low_dim_Hessian}.\label{fig:lowdim_Hess} }
\end{figure}

\begin{figure}[t]\centering
\subfloat[logistic regression ({\tt rcv1})]{\includegraphics[width=.5\columnwidth,height=.4\columnwidth]{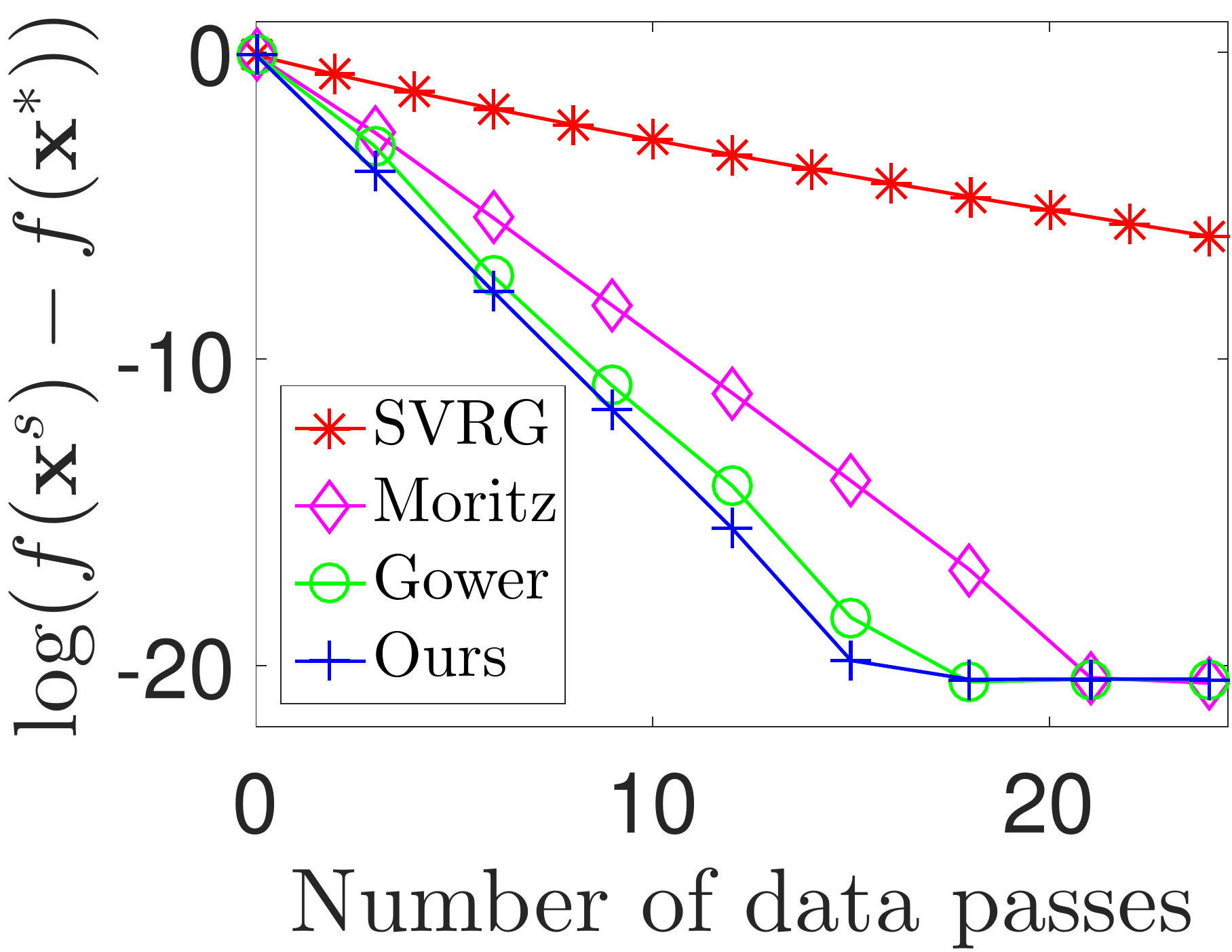}}\hfill
\subfloat[ridge regression ({\tt E2006-tf})]{\includegraphics[width=.47\columnwidth,height=.4\columnwidth]{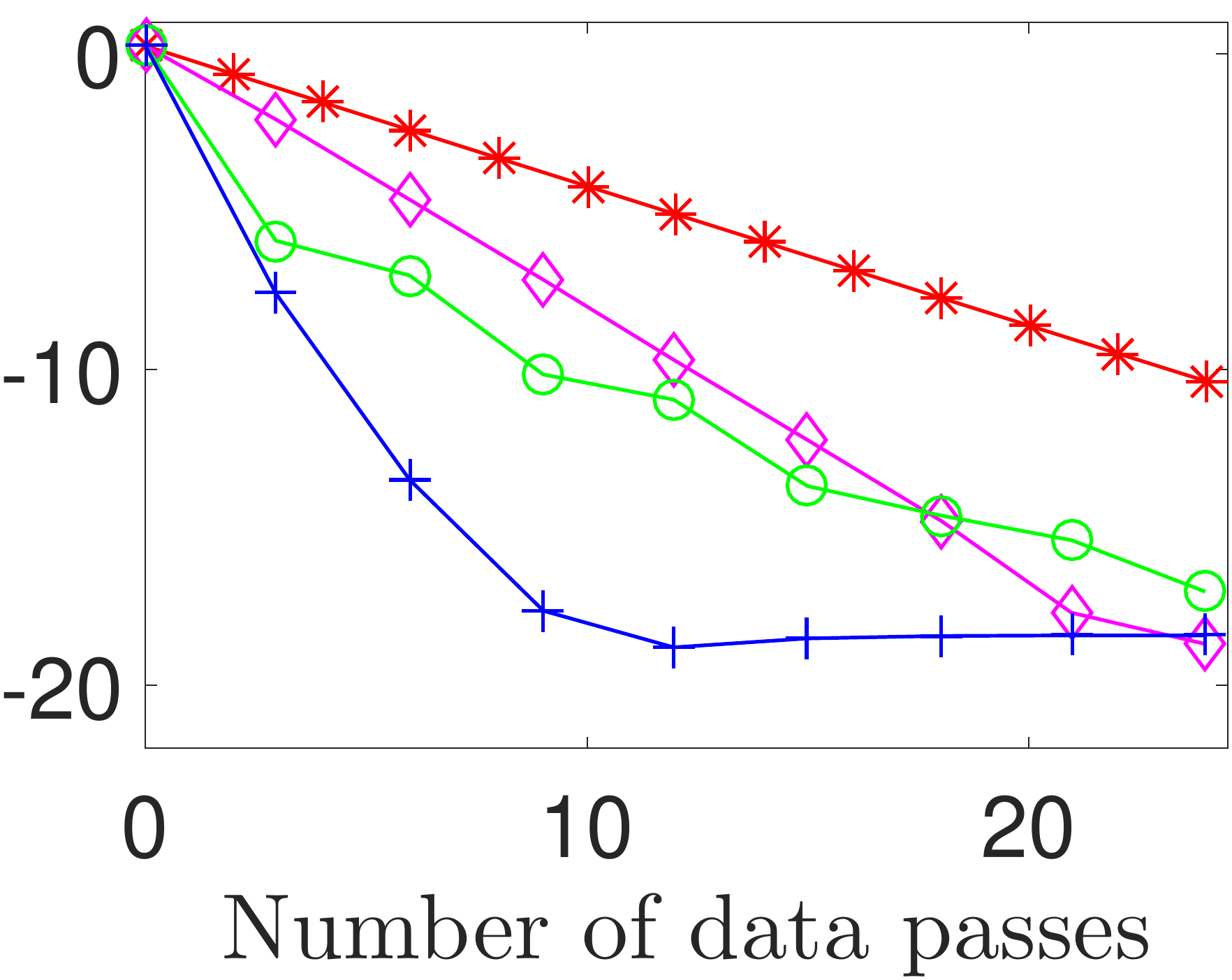}}
\caption{Comparison of our algorithm (Ours) with benchmarking algorithms (SVRG, Moritz and Gower) on the {\tt rcv1} and {\tt E2006-tf} datasets.}\label{fig:diff_algo} 
\end{figure}


\renewcommand{\thedefinition}{A-\arabic{definition}}
\renewcommand{\thelemma}{T-\arabic{lemma}}
\renewcommand{\thecorollary}{A-\arabic{corollary}}
\renewcommand{\thesection}{A-\arabic{section}}
\renewcommand{\theremark}{A-\arabic{remark}}
\renewcommand{\thefigure}{A-\arabic{figure}}
\renewcommand{\thetable}{A-\arabic{table}}
\renewcommand{\thealgorithm}{A-\arabic{algorithm}}

\setcounter{lemma}{0}
\setcounter{theorem}{0}
\setcounter{section}{0}
\setcounter{remark}{0}

\appendices

\section{Proof of Lemma~\ref{lem:bound_var}}\label{sec:proof_bound_var}

The proof of Lemma~\ref{lem:bound_var} is shown in \eqref{eq:bound_var_first} to \eqref{eq:bound_var_led} on page~\pageref{fig:equations}. 
\begin{figure*}[h!] \label{fig:equations}
\begin{align}
&\bbE_{\calB_{s,t}}\left[\norm{\bv_{s,t}-\nabla f(\bx_{s,t})}^2\vert\calF_{s,t}\right]\label{eq:bound_var_first}\\
=\;& \bbE_{\calB_{s,t}}\bigg[\Big\Vert\frac{1}{b}\sum_{j=1}^b\left(1/(np_{i_j})\left(\nabla f_{i_j}(\bx_{s,t})-\nabla f_{i_j}(\bx^s)\right)+\nabla f(\bx^s)-\nabla f(\bx_{s,t})\right)\Big\Vert^2\Big\vert\calF_{s,t}\bigg]\\
=\;&\frac{1}{b^2} \sum_{j=1}^b\bbE_{i_j}\left[\left.\norm{1/(np_{i_j})\left(\nabla f_{i_j}(\bx_{s,t})-\nabla f_{i_j}(\bx^s)\right)+\nabla f(\bx^s)-\nabla f(\bx_{s,t})}^2\right\vert\calF_{s,t}\right]\label{eq:bound_var_eqa}\\
\le\;&\frac{1}{b^2} \sum_{j=1}^b\bbE_{i_j}\left[\left.\norm{1/(np_{i_j})\left(\nabla f_{i_j}(\bx_{s,t})-\nabla f_{i_j}(\bx^s)\right)}^2\right\vert\calF_{s,t}\right]\label{eq:bound_var_leb}\\
=\;&\frac{1}{b^2} \sum_{j=1}^b\bbE_{i_j}\left[\left.\norm{1/(np_{i_j})\left(\nabla f_{i_j}(\bx_{s,t})-\nabla f_{i_j}(\bx^*)\right)+1/(np_{i_j})\left(\nabla f_{i_j}(\bx^*)-\nabla f_{i_j}(\bx^s)\right)}^2\right\vert\calF_{s,t}\right]\\
\le\;&\frac{2}{b^2} \sum_{j=1}^b\bbE_{i_j}\left[\left.\norm{1/(np_{i_j})\left(\nabla f_{i_j}(\bx_{s,t})-\nabla f_{i_j}(\bx^*)\right)}^2\right\vert\calF_{s,t}\right]+\bbE_{i_j}\left.\left[\norm{1/(np_{i_j})\left(\nabla f_{i_j}(\bx^s)-\nabla f_{i_j}(\bx^*)\right)}^2\right\vert\calF_{s,t}\right]\label{eq:bound_var_lec}\\
\le\;&\frac{2}{b^2} \sum_{j=1}^b 2\barL(f(\bx_{s,t})-f(\bx^*))+2\barL(f(\bx^{s})-f(\bx^*))
=\; \frac{4\barL}{b} \left(f(\bx_{s,t})-f(\bx^*)+f(\bx^s)-f(\bx^*)\right).\label{eq:bound_var_led}
\end{align}\hrule
\end{figure*}
In \eqref{eq:bound_var_eqa}, we use the independence of $i_j$ and $i_{j'}$ for $j\ne j'$ and \eqref{eq:expectation_nonuni} in Lemma~\ref{lem:bound_nonuniform}. 
In \eqref{eq:bound_var_leb}, we use \eqref{eq:expectation_nonuni} and the fact that $\bbE\big[\norm{\ba-\bbE[\ba]}^2\big\vert\calG\big]\le \bbE\big[\norm{\ba}^2\big\vert\calG\big]$ almost surely, for any $\sigma$-algebra $\calG$. In addition, the inequalities in~\eqref{eq:bound_var_lec} and \eqref{eq:bound_var_led}  follow from $\norm{\ba+\bb}^2\le 2\norm{\ba}^2+2\norm{\bb}^2$ 
and \eqref{eq:bound_norm_nonuni} respectively.
 
\section{Proof of Lemma~\ref{lem:spectral_bound}}\label{sec:proof_spec_bound}
Our proof is inspired by \cite{Gower_16}. For any $\bx\in\bbR^d$ and $\calT\subseteq [n]$ with cardinality $b_\rmH$, we have 
\begin{equation}
\barmu_{b_\rmH}\bI \preceq \nabla^2 f_{\calT}(\bx) \preceq \barL_{b_\rmH} \bI.
\end{equation}
Define $\bV_k \defeq \bI-{\by_k{\bs_k}^T}/({\by_k}^T\bs_k)$ and $\bQ_k \defeq {\bs_k{\bs_k}^T}/({\by_k}^T\bs_k)$, then \eqref{eq:update_H} becomes
\begin{equation}
\bH^{(k)}_r = \bV_k\bH^{(k-1)}_r\bV_k^T + \bQ_k.\label{eq:update_H_VQ}
\end{equation} 
Fix any $r\in\bbN$. 
Since $\by_k = \nabla^2 f_{\calT_r}(\barbx_r)\bs_k$, we have 
\begin{align*}
\bV_k &= \bI-\frac{\nabla^2 f_{\calT_r}(\barbx_r)\bs_k{\bs_k}^T}{{\bs_k}^T\nabla^2 f_{\calT_r}(\barbx_r)\bs_k}\\
&= \nabla^2 f_{\calT_r}(\barbx_r)^{1/2}\left(\bI - \frac{\tilbs_k{\tilbs_k}^T}{{\tilbs_k}^T\tilbs_k}\right)\nabla^2 f_{\calT_r}(\barbx_r)^{-1/2},
\end{align*}
where $\tilbs_k \defeq \nabla^2 f_{\calT_r}(\barbx_r)^{1/2}\bs_k$. 
Hence 
\begin{align*}
\norm{\bV_k} &\le \norm{\nabla^2 f_{\calT_r}(\barbx_r)^{1/2}}\norm{\bI - \frac{\tilbs_k{\tilbs_k}^T}{\norm{\tilbs_k}^2}}\norm{\nabla^2 f_{\calT_r}(\barbx_r)^{-1/2}}\\
&\le \barL_{b_\rmH}^{1/2} \barmu_{b_\rmH}^{-1/2} = \kappa_{b_\rmH}^{1/2}.
\end{align*}
Similarly,
$\norm{\bQ_k} = {\norm{\bs_k}^2}/({\bs_k}^T\nabla^2 f_{\calT_r}(\barbx_r)\bs_k) \le {1}/\barmu_{b_\rmH}$. 
By~\eqref{eq:update_H_VQ}, 
\begin{align}
\norm{\bH^{(k)}_r} &\le \norm{\bV_k}^2\norm{\bH^{(k-1)}_r} + \norm{\bQ_k} \nn\\ 
&\le \kappa_{b_\rmH}\norm{\bH^{(k-1)}_r}+\frac{1}{\barmu_{b_\rmH}}. \label{eq:bound_H_k}
\end{align}
We apply \eqref{eq:bound_H_k} repeatedly over $k\!=\!r-M'+1,\ldots,r$, then 
\begin{align*}
\norm{\bH_r}=\norm{\bH_r^{(r)}} &\le \kappa_{b_\rmH}^{M'}\norm{\bH_r^{(r-M')}} + \frac{1}{\barmu_{b_\rmH}}\sum_{i=0}^{M'-1}\kappa_{b_\rmH}^i\\
&\lea \frac{1}{\barmu_{b_\rmH}}\left(\kappa_{b_\rmH}^{M'}+\frac{\kappa_{b_\rmH}^{M'}-1}{\kappa_{b_\rmH}-1}\right)\\
&\le \frac{1}{\barmu_{b_\rmH}}\kappa_{b_\rmH}^{M'}\left(1+\frac{1}{\kappa_{b_\rmH}-1}\right)\\
&\leb \frac{\kappa_{b_\rmH}^{M+1}}{\barmu_{b_\rmH}(\kappa_{b_\rmH}-1)},
\end{align*}
where (a) follows from the definition of $\bH_r^{(r-M')}$ in \eqref{eq:def_H0} and 
\begin{align}
\frac{{\bs_r}^T\by_r}{{\by_r}^T\by_r} &= \frac{{\tilbs_r}^T\tilbs_r}{{\tilbs_r}^T\nabla^2 f_{\calT_r}(\barbx_r)\tilbs_r}\le \frac{1}{\barmu_{b_\rmH}},
\end{align}
and (b) follows from $\kappa_{b_\rmH}\ge 1$ and $M'\le M$. 
To show $\gamma = {1}/{(M+1)\barL_{b_\rmH}}$, it suffices to show $\norm{\bB_r}\le (M+1)\barL_{b_\rmH}$.
We derive this bound using \eqref{eq:update_B}. Since 
\begin{align}
&\hspace{-.1cm}\quad \,\norm{\bB_r^{(k-1)}-\frac{\bB_r^{(k-1)}\bs_k{\bs_k}^T\bB_r^{(k-1)}}{{\bs_k}^T\bB_r^{(k-1)}\bs_k}} \nn\\
&\hspace{-.1cm}\le \norm{\left(\bB_r^{(k-1)}\right)^{1/2}}^2\norm{\bI-\frac{\hatbs_k\hatbs_k^T}{\norm{\hatbs_k}^2}} = \norm{\bB_r^{(k-1)}},\nn
\end{align}
and
\begin{align}
&\hspace{-.1cm}\quad\,\norm{\frac{\by_k\by_k^T}{\bs_k^T\by_k}} = \frac{\norm{\by_k}^2}{\bs_k^T\by_k} = \frac{\tilbs_k^T\nabla^2 f_{\calT_r}(\barbx_r)\tilbs_k}{\tilbs_k^T\tilbs_k} \le \barL_{b_\rmH},\label{eq:bound_B0}
\end{align}
we have from \eqref{eq:update_B} that 
\begin{equation}
\norm{\bB_r^{(k)}} \le \norm{\bB_r^{(k-1)}} + \norm{\frac{\by_k\by_k^T}{\bs_k^T\by_k}}\le \norm{\bB_r^{(k-1)}} + \barL_{b_\rmH}.\nn 
\end{equation}
Therefore, 
\begin{align}
\norm{\bB_r} = \norm{\bB_r^{(r)}} \le \norm{\bB_r^{(r-M')}} + M'\barL_{b_\rmH} \lea (M+1)\barL_{b_\rmH},\nn
\end{align}
where (a) follows from \eqref{eq:bound_B0}. 

\section{Proof of Proposition~\ref{prop:exp_samp_ave} }\label{sec:proof_exp_ave}
Our proof is modified from that of Theorem~\ref{thm:main}. Specifically, our proof leverages a refined telescoping of \eqref{eq:succ_dist_bound}. (The steps up to \eqref{eq:succ_dist_bound} are unchanged.)
For each $t\in[m]$, we multiply both sides of \eqref{eq:succ_dist_bound} by $(1-\eta\gamma\barmu)^{m-t}$ and obtain
\begin{align}
&\hspace{-.2cm}(1-\eta\gamma\barmu)^{m-t}\left(\left.\bbE_{\calB_{s,t}}\left[\norm{\tilbx_{s,t,r}-\tilbx_r^*}^2\right\vert\calF_{s,t}\right]\right.\nn\\
&\hspace{-.25cm}\left.\hspace{3cm}+2\eta\bbE_{\calB_{s,t}}\left.\left[\tilf_r (\tilbx_{s,t,r})-\tilf_r (\tilbx_{r}^*)\right\vert\calF_{s,t}\right]\right) \nn\\
&\hspace{-.25cm}\le(1-\eta\gamma\barmu)^{m-t}\left((1-\eta\gamma\barmu)\norm{\tilbx_{s,t-1,r} - \tilbx_r^*}^2\right. \nn\\
&\hspace{-.25cm}+\!\frac{8}{b}\Gamma\barL\eta^2(\tilf_r(\tilbx_{s,t-1,r})\!-\!\tilf_r(\tilbx_r^*)\!+\!\tilf_{r'}(\tilbx^{s,{r'}})\!-\!\tilf_{r'}(\tilbx_{r'}^*))\!\Big).  \label{eq:succ_dist_bound2}
\end{align}
Telescope \eqref{eq:succ_dist_bound2} over $t=1,\ldots,m$ and we have
\begin{align}
&2c\eta\left(1-\frac{4}{b}\frac{\Gamma\barL\eta}{1-\eta\gamma\barmu}\right)\nn\\
&\hspace{.5cm}\cdot\frac{1}{c}\sum_{t=1}^m(1-\eta\gamma\barmu)^{m-t}\bbE_{\calB_{s,(t]}}\left.\left[\tilf_r(\tilbx_{s,t,r})-\tilf_r(\tilbx^*_{r})\right\vert\calF_{s}\right]\nn\\
&\le(1-\eta\gamma\barmu)^{m}\norm{\tilbx^{s,r'}-\bx_{r'}^*}^2\nn\\
&\;+\frac{8}{b}\Gamma\barL\eta^2\left((1-\eta\gamma\barmu)^{m}+c\right)\left(\tilf_{r'}(\tilbx^{s,r'})-\tilf_{r'}(\tilbx_{r'}^*)\right).\label{eq:telescoped_uniform2}
\end{align}
Now we consider using option IV to choose $\bx^{s+1}$. 
Since $0<\beta\le 1-\eta\gamma\barmu$, using \eqref{eq:tilf} and Jensen's inequality, 
 we have
\begin{align}
&\frac{1}{c}\sum_{t=1}^m(1-\eta\gamma\barmu)^{m-t}\bbE_{\calB_{s,(t]}}\left.\left[\tilf_r(\tilbx_{s,t,r})-\tilf_r(\tilbx^*_{r})\right\vert\calF_{s}\right]\nn\\\
&\hspace{1.7cm}\ge \bbE_{\calB_{s,(m]}}\left.\left[ \tilf_{r''}(\tilbx^{s+1,r''})-\tilf(\tilbx^*_{r''})\right\vert\calF_s\right]. \label{eq:exp_Jensen_nonuni}
\end{align}
Alternatively, if $\bx^{s+1}$ is determined using option III, the definition of distribution $Q$ still yields \eqref{eq:exp_Jensen_nonuni}. 
Finally, using \eqref{eq:bound_sc} in Lemma~\ref{lem:sc_sm} to bound $\normt{\tilbx^{s,r'}-\tilbx_{r'}^*}^2$ in \eqref{eq:telescoped_uniform2}, we have
\begin{align}
&2c'\eta\left(1-\frac{4}{b}\frac{\Gamma\barL\eta}{1-\eta\gamma\barmu}\right)\bbE_{\calB_{s,(m]}}\left.\left[ \tilf_{r''}(\tilbx^{s+1,r''})-\tilf(\tilbx^*_{r''})\right\vert\calF_s\right]\nn\\
&\hspace{0cm}\le\left(\frac{8}{b}\Gamma\barL\eta^2(1+c')+\frac{2}{\gamma\barmu}\right)\left(\tilf_{r'}(\tilbx^{s,r'})-\tilf_{r'}(\tilbx_{r'}^*)\right).
\end{align}
Taking expectation on both sides and using \eqref{eq:tilf}, we arrive at~\eqref{eq:faster_linear_conv}.

\section{Proof of Proposition~\ref{prop:subsamp_grad_outer} }\label{sec:proof_subsamp_grad_outer}

The subsampled gradient strategy essentially introduces errors in $\{\bg_s\}_{s\ge 0}$. Therefore, we explicit model this error by $\be_s \defeq \tilbg_s - \bg_s$. 
We first bound the second moment of $\be_s$. 
Since $\bg_s = \nabla f(\bx^s)$, by Lemma~\ref{lem:uniform_samp_worep} and $\tilb_s \ge nS^2\alpha_s/(S^2\alpha_s +(n-1)\xi^2\barrho^{2s})$, we have for any $s\in(S]$,
\begin{equation}
\bbE_{\tilcalB_s}\left[\norm{\be_s}^2\right]\le \frac{n-\tilb_s}{\tilb_s(n-1)}\alpha_s\le \frac{\xi^2}{S^2}\barrho^{2s}. \label{eq:error_2nd_mom}
\end{equation}
As a result, 
\begin{equation}
\bbE_{\tilcalB_s}\left[\norm{\be_s}\right] \le \sqrt{\bbE_{\tilcalB_s}\left[\norm{\be_s}^2\right]} \le {\frac{\xi}{S}}\barrho^{s}. \label{eq:error_1st_mom}
\end{equation}
The introduction of random sets $\{\tilcalB_{s}\}_{s\ge 0}$ requires us to redefine the filtration $\{\calF_{s,t}\}_{s\ge 0, t\in(m-1]}$ as
\begin{align}
&\calF_{s,t}\defeq \sigma\Big(\{\tau_j\}_{j=0}^{s-1}\cup\{\tilcalB_{j}\}_{j\in (s]}\cup\{\calB_{i,j}\}_{i\in (s-1],j\in(m-1]}\nn\\
&\hspace{3cm}\cup\{\calB_{s,j}\}_{j=0}^{t-1}\cup\{\calT_j\}_{j=0}^{\floor{(sm+t)/L}}\Big).
\end{align}
As usual, we define $\calF_s\defeq \calF_{s,0}$. In the sequel, we follow the naming convention in coordinate transformation framework as in Section~\ref{sec:def}. In particular, we define $\tilbe_{s,r'} \defeq \bH_{r'}^{1/2}\be_{s}$. 
 Now, define $\tilbv'_{s,t,r} \defeq \tilbv_{s,t,r}+\tilbe_{s,r'}$, then from \eqref{eq:exp_nabla_tilf} and \eqref{eq:var_nabla_tilf}, we have 
\begin{align}
&\bbE_{\calB_{s,t}}\left.\left[\tilbv'_{s,t,r}\right\vert\calF_{s,t}\right] = \nabla \tilf_r (\tilbx_{s,t,r}) + \tilbe_{s,r'},\label{eq:exp_nabla_tilf2}\\
&\bbE_{\calB_{s,t}}\left.\left[\norm{\tilbv'_{s,t,r}-\nabla \tilf_r(\tilbx_{s,t,r})}^2\right\vert\calF_{s,t}\right]\le \frac{4\Gamma\barL}{b}(\tilf_r(\tilbx_{s,t,r})\nn\\
&\hspace{1cm}-\tilf_r(\tilbx_r^*)+\tilf_{r'}(\tilbx^{s,{r'}})-\tilf_{r'}(\tilbx_{r'}^*)) + \norm{\tilbe_{s,r'}}^2.\label{eq:var_nabla_tilf2}
\end{align} 
Define $\tbdelta'_{s,t,r} \defeq \tilbv'_{s,t,r}-\nabla \tilf_r (\tilbx_{s,t,r})$. We can derive an inequality similar to \eqref{eq:succ_bound_sqEuc} in the proof of Theorem~\ref{thm:main}, i.e., 
\begin{align*}
&\norm{\tilbx_{s,t+1,r}-\tilbx_r^*}^2 \!\le\! (1-\eta\gamma\barmu)\norm{\tilbx_{s,t,r} \!-\! \tilbx_r^*}^2 \!-\! 2\eta(\tilf_r (\tilbx_{s,t+1,r})\nn\\
&\hspace{1cm}-\tilf_r (\tilbx_{r}^*))+ {2\eta^2} \normt{\tbdelta'_{s,t,r}}^2 -2\eta\lrangle{\tbdelta'_{s,t,r}}{\tilbx_{s,t,r}-\tilbx_r^*},
\end{align*}
Taking expectation w.r.t. $\calB_{s,t}$ and using \eqref{eq:exp_nabla_tilf2} and \eqref{eq:var_nabla_tilf2}, we have
\begin{align}
&\left.\bbE_{\calB_{s,t}}\!\!\left[\norm{\tilbx_{s,t+1,r}-\tilbx_r^*}^2+2\eta(\tilf_r (\tilbx_{s,t+1,r})-\tilf_r (\tilbx_{r}^*))\right\vert\calF_{s,t}\right]\nn\\
&\le(1-\eta\gamma\barmu)\norm{\tilbx_{s,t,r} - \tilbx_r^*}^2 +\frac{8}{b}\Gamma\barL\eta^2(\tilf_r(\tilbx_{s,t,r})-\tilf_r(\tilbx_r^*)\nn\\
&\!+\!\tilf_{r'}(\tilbx^{s,{r'}})\!-\!\tilf_{r'}(\tilbx_{r'}^*)) \!+\! 2\eta^2\norm{\tilbe_{s,r'}}^2 \!-\!2\eta\lrangle{\tilbe_{s,r'}}{\tilbx_{s,t,r}\!-\!\tilbx_r^*}.  \label{eq:succ_dist_bound3}
\end{align}
Using Cauchy-Schwartz inequality, we have 
\begin{align}
&\hspace{-.35cm}-2\eta\lrangle{\tilbe_{s,r'}}{\tilbx_{s,t,r}-\tilbx_r^*}\le 2\eta\norm{\tilbe_{s,r'}}\norm{\tilbx_{s,t,r}-\tilbx_r^*}\nn\\
&\hspace{-.35cm}\le 2\eta\norm{\tilbe_{s,r'}}\normt{\bH_r^{-1/2}}\norm{\bx_{s,t}-\bx^*}\le 2\eta B\gamma^{-1/2} \norm{\tilbe_{s,r'}}. \label{eq:CS_bound}
\end{align} 
Substituting \eqref{eq:CS_bound} into \eqref{eq:succ_dist_bound3}, and using the telescoping techniques in Appendix~\ref{sec:proof_exp_ave}, we have
\begin{align}
&2c\eta\left(1-\frac{4}{b}\frac{\Gamma\barL\eta}{1-\eta\gamma\barmu}\right)\frac{1}{c}\sum_{t=1}^m(1-\eta\gamma\barmu)^{m-t}\nn\\
&\cdot\bbE_{\calB_{s,(t]}}\!\!\left.\left[\tilf_r(\tilbx_{s,t,r})-\tilf_r(\tilbx^*_{r})\right\vert\calF_{s}\right]\!\le\!(1-\eta\gamma\barmu)^{m}\normt{\tilbx^{s,r'}\!-\!\bx_{r'}^*}^2\nn\\
&\hspace{.6cm}+\frac{8}{b}\Gamma\barL\eta^2\left((1-\eta\gamma\barmu)^{m}+c\right)\left(\tilf_{r'}(\tilbx^{s,r'})-\tilf_{r'}(\tilbx_{r'}^*)\right)\nn\\
&\hspace{.6cm} + 2\eta\left((1-\eta\gamma\barmu)^{m}+c\right)\left(B\gamma^{-1/2} \norm{\tilbe_{s,r'}}+\eta\norm{\tilbe_{s,r'}}^2\right).\nn
\end{align}
With either option III or IV and \eqref{eq:bound_sc} in Lemma~\ref{lem:sc_sm}, we have
\begin{align}
&2c'\eta\left(1-\frac{4}{b}\frac{\Gamma\barL\eta}{1-\eta\gamma\barmu}\right)\bbE_{\calB_{s,(m]}}\left.\left[ \tilf_{r''}(\tilbx^{s+1,r''})-\tilf(\tilbx^*_{r''})\right\vert\calF_s\right]\nn\\
&\le\left(\frac{8}{b}\Gamma\barL\eta^2(1+c')+\frac{2}{\gamma\barmu}\right)\left(\tilf_{r'}(\tilbx^{s,r'})-\tilf_{r'}(\tilbx_{r'}^*)\right) \nn\\
&\hspace{1cm}+ 2\eta\left(1+c'\right)\left(B\gamma^{-1/2} \norm{\tilbe_{s,r'}}+\eta\norm{\tilbe_{s,r'}}^2\right).
\end{align}
Taking expectation on both sides and using \eqref{eq:tilf}, we have
\begin{align}
&\bbE\left[f(\bx^{s+1})-f(\bx^*)\right] \le \barrho\bbE\left[f(\bx^s)-f(\bx^*)\right]+\left(1+{1}/{c'}\right) \nn\\
&\cdot\frac{b}{b-{4\Gamma\barL\eta}/{(1-\eta\gamma\barmu)}}\left(B\gamma^{-1/2} \bbE[\norm{\tilbe_{s,r'}}]+\eta\bbE[\norm{\tilbe_{s,r'}}^2]\right). \label{eq:recur_err}
\end{align}
From \eqref{eq:error_2nd_mom} and \eqref{eq:error_1st_mom}, we have
\begin{align}
\bbE\left[\norm{\tilbe_{s,r'}}\right] &\le \bbE\left[\normt{\bH^{1/2}_{r'}}\norm{\be_{s}}\right] \le \frac{\Gamma^{1/2}\xi}{S}\barrho^{s},\label{eq:1st_mom_err_r}\\
\bbE\left[\norm{\tilbe_{s,r'}}^2\right] &\le \bbE\left[\normt{\bH^{1/2}_{r'}}^2\norm{\be_{s}}^2\right] \le {\frac{\Gamma \xi^2}{S^2}}\barrho^{2s}.\label{eq:2nd_mom_err_r}
\end{align}
Substituting \eqref{eq:1st_mom_err_r} and \eqref{eq:2nd_mom_err_r} into \eqref{eq:recur_err}, we have
\begin{align}
&\hspace{-.2cm}\bbE\left[f(\bx^{s+1})-f(\bx^*)\right] 
\le \barrho\bbE\left[f(\bx^s)-f(\bx^*)\right]\nn\\
&\hspace{-.3cm} +  \left(1\!+\!\frac{1}{c'}\right)\frac{b}{b-{4\Gamma\barL\eta}/{(1-\eta\gamma\barmu)}}\left(\kappa_\rmH^{1/2}B+{\eta\Gamma \xi}\right)\frac{\xi}{S}\barrho^{s}.\label{eq:recur_err2}
\end{align}
Applying \eqref{eq:recur_err2} recursively and we reach \eqref{eq:lin_conv_subsamp_grad}.

\section{Technical Lemmas}\label{app:tech_lemma}
Lemmas \ref{lem:sc_sm}. \ref{lem:bound_nonuniform}, and \ref{lem:uniform_samp_worep} can be found in \cite[Chapter~9]{Boyd_04}, \cite{Xiao_14}, and \cite{Nitanda_14} respectively.
\begin{lemma}\label{lem:sc_sm}  
    If a function $f:\bbR^d\to\bbR$ is $\mu$-strongly convex and $L$-smooth, then for any $\bx\in\bbR^d$, 
\begin{align}
\frac{\mu}{2} \norm{\bx-\bx^*}^2 &\le f(\bx) - f(\bx^*) \le \frac{1}{2\mu} \norm{\nabla f(\bx)}^2,\label{eq:bound_sc}\\
\frac{1}{2L} \norm{\nabla f(\bx)}^2  &\le f(\bx) - f(\bx^*) \le \frac{L}{2} \norm{\bx-\bx^*}^2, \label{eq:bound_sm}
\end{align}
where $\bx^*$ denotes the unique minimizer of $f$ on $\bbR^d$. 
\end{lemma}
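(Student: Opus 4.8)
The plan is to derive all four inequalities from the two standard quadratic characterizations of strong convexity and smoothness, together with the stationarity of the minimizer. Concretely, for all $\bx,\by\in\bbR^d$, $\mu$-strong convexity and $L$-smoothness (i.e.\ $L$-Lipschitzness of $\nabla f$) respectively furnish the lower and upper quadratic bounds
\begin{align*}
f(\by) &\ge f(\bx) + \lrangle{\nabla f(\bx)}{\by-\bx} + \tfrac{\mu}{2}\norm{\by-\bx}^2, \\
f(\by) &\le f(\bx) + \lrangle{\nabla f(\bx)}{\by-\bx} + \tfrac{L}{2}\norm{\by-\bx}^2.
\end{align*}
Since $\bx^*$ is the unique minimizer, $\nabla f(\bx^*)=\vecz$.

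First I would obtain the two bounds involving $\norm{\bx-\bx^*}^2$ by specializing to the reference point $\bx^*$. Substituting $\bx\leftarrow\bx^*$ and $\by\leftarrow\bx$ into the strong-convexity bound and invoking $\nabla f(\bx^*)=\vecz$ yields the left inequality of \eqref{eq:bound_sc}; the identical substitution in the smoothness bound yields the right inequality of \eqref{eq:bound_sm}. Next I would obtain the two gradient-norm bounds by optimizing each quadratic over $\by$ with $\bx$ held fixed. Minimizing the strong-convexity lower bound over $\by$ (the minimizer being $\by=\bx-\tfrac1\mu\nabla f(\bx)$) and using $f(\bx^*)=\min_{\by}f(\by)$ gives $f(\bx^*)\ge f(\bx)-\tfrac{1}{2\mu}\norm{\nabla f(\bx)}^2$, which is the right inequality of \eqref{eq:bound_sc}. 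Evaluating the smoothness upper bound at $\by=\bx-\tfrac1L\nabla f(\bx)$ and using $f(\bx^*)\le f(\by)$ gives $f(\bx^*)\le f(\bx)-\tfrac{1}{2L}\norm{\nabla f(\bx)}^2$, which is the left inequality of \eqref{eq:bound_sm}.

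There is no substantive obstacle here, as this is a textbook fact (see \cite[Chapter~9]{Boyd_04}); the only care needed is bookkeeping — selecting the correct reference point and the correct direction of each inequality, and carrying the minimizing steps $-\tfrac1\mu\nabla f(\bx)$ versus $-\tfrac1L\nabla f(\bx)$ so that the constants $\tfrac{1}{2\mu}$ and $\tfrac{1}{2L}$ emerge correctly. Collecting the four one-line derivations then completes the chains \eqref{eq:bound_sc} and \eqref{eq:bound_sm}.
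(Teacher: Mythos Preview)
Your proposal is correct and matches the paper's treatment: the paper does not prove this lemma but simply cites \cite[Chapter~9]{Boyd_04}, and your argument is exactly the standard textbook derivation from that reference. There is nothing to add.
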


\begin{lemma}\label{lem:bound_nonuniform}  
Let $f$ be defined as in $\eqref{eq:problem}$ and satisfies Assumption~\ref{assump:sc_sm}. Define a distribution $p$ with support $[n]$ such that $p_i=L_i/(n\barL)$, for any $i\in[n]$. Then for any $\bx\in\bbR^d$, 
\begin{align}
&\bbE_{i\sim p}\left[\frac{1}{np_{i}}\nabla f_{i}(\bx)\right] = \nabla f(\bx),\label{eq:expectation_nonuni}\\
&\bbE_{i\sim p}\left[\norm{\frac{1}{np_{i}}\left(\nabla f_{i}(\bx)-\nabla f_{i}(\bx^*)\right)}^2\right]\le 2\barL(f(\bx)-f(\bx^*)), \label{eq:bound_norm_nonuni}
\end{align} 
where $\bx^*$ denotes the unique minimizer of $f$ on $\bbR^d$. 
\end{lemma}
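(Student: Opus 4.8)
The plan is to establish the two claims separately, both by unwinding the definition of the importance-sampling distribution $p$. First I would check that $p$ is a genuine probability distribution: since $\barL=\frac1n\sum_{i=1}^n L_i$, summing gives $\sum_{i=1}^n p_i=\sum_{i=1}^n L_i/(n\barL)=1$. The unbiasedness identity \eqref{eq:expectation_nonuni} is then immediate, because the weight $1/(np_i)$ is exactly the importance-sampling correction:
\[
\bbE_{i\sim p}\Big[\tfrac{1}{np_i}\nabla f_i(\bx)\Big]=\sum_{i=1}^n p_i\cdot\frac{1}{np_i}\nabla f_i(\bx)=\frac1n\sum_{i=1}^n\nabla f_i(\bx)=\nabla f(\bx),
\]
where the last step is just differentiating $f=\frac1n\sum_i f_i$. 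Note this part holds for any distribution with full support; the specific choice of $p$ only matters for the variance bound.

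For \eqref{eq:bound_norm_nonuni} I would first expand the expectation and substitute the explicit form $p_i=L_i/(n\barL)$, which turns the prefactor into $1/(n^2p_i)=\barL/(nL_i)$, giving
\[
\bbE_{i\sim p}\Big[\norm{\tfrac{1}{np_i}\big(\nabla f_i(\bx)-\nabla f_i(\bx^*)\big)}^2\Big]=\sum_{i=1}^n\frac{\barL}{nL_i}\norm{\nabla f_i(\bx)-\nabla f_i(\bx^*)}^2.
\]
The conceptual point here is that the weight $1/L_i$ introduced by nonuniform sampling is precisely what will cancel the $L_i$ produced by the smoothness of $f_i$ in the next step; this cancellation is the mechanism that yields the \emph{average} constant $\barL$ rather than the worst-case $L_{\max}$ appearing in the uniform-sampling bounds \eqref{eq:bound_Moritz} and \eqref{eq:bound_Gower}.

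The crux is the standard co-coercivity inequality for an $L_i$-smooth convex function,
\[
\norm{\nabla f_i(\bx)-\nabla f_i(\bx^*)}^2\le 2L_i\big(f_i(\bx)-f_i(\bx^*)-\lrangle{\nabla f_i(\bx^*)}{\bx-\bx^*}\big),
\]
which follows by combining the quadratic upper bound furnished by $L_i$-smoothness with the convexity of $f_i$ (both available from Assumption~\ref{assump:sc_sm}). Substituting this into the sum above cancels the factors of $L_i$ and yields the bound $2\barL\big(f(\bx)-f(\bx^*)-\lrangle{\nabla f(\bx^*)}{\bx-\bx^*}\big)$. Since $\bx^*$ is the unique minimizer of $f$ on $\bbR^d$, first-order optimality gives $\nabla f(\bx^*)=\vecz$, so the inner-product term vanishes and \eqref{eq:bound_norm_nonuni} follows. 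I expect the only nonroutine ingredient to be the co-coercivity inequality; the rest is bookkeeping, and the single idea worth highlighting is the exact cancellation of $L_i$ that replaces a worst-case smoothness constant by its average.
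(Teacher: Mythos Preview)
Your proof is correct and is exactly the standard argument for this result; the paper does not prove the lemma itself but cites it from~\cite{Xiao_14}, where the same co-coercivity step and $L_i$-cancellation are used.
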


\begin{lemma}\label{lem:uniform_samp_worep} 
Let $\{\bz_i\}_{i=1}^n\subseteq\bbR^d$ and define $\barbz\defeq 1/n\sum_{i=1}^n\bz_i$. Uniformly sample a random subset $\calS$ of $[n]$ with size $b$ without replacement. Then 
\begin{equation}
\bbE_{\calS} \left[\norm{\frac{1}{b}\sum_{i\in\calS}\bz_i - \barbz}^2\right] \le \frac{n-b}{b(n-1)}\left(\frac{1}{n}\sum_{i=1}^n\norm{\bz_i}^2\right). 
\end{equation}
\end{lemma}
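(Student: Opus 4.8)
The plan is to reduce everything to the first two moments of the membership indicators of the random set $\calS$. For each $i\in[n]$, introduce $X_i \defeq \bbone\{i\in\calS\}$, so that $\frac{1}{b}\sum_{i\in\calS}\bz_i = \frac{1}{b}\sum_{i=1}^n X_i\bz_i$. Because $\calS$ is a uniformly random $b$-subset, each coordinate satisfies $\bbE[X_i]=b/n$, and since $X_i$ is an indicator, $\var(X_i)=\frac{b}{n}(1-\frac{b}{n})=\frac{b(n-b)}{n^2}$. The crucial without-replacement feature appears in the pairwise statistics: for $i\neq j$, $\bbE[X_iX_j]=\bbP(i,j\in\calS)=\frac{b(b-1)}{n(n-1)}$, which yields the \emph{negative} covariance $\cov(X_i,X_j)=-\frac{b(n-b)}{n^2(n-1)}$.

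First I would note that $\bbE\big[\frac{1}{b}\sum_{i=1}^n X_i\bz_i\big]=\barbz$, so the left-hand side equals $\frac{1}{b^2}\bbE\big[\norm{\sum_{i=1}^n (X_i-\bbE[X_i])\bz_i}^2\big]$. Expanding the squared norm and taking expectation term by term turns this into $\frac{1}{b^2}\sum_{i,j}\cov(X_i,X_j)\lrangle{\bz_i}{\bz_j}$. Substituting the two moment formulas above splits the double sum into a diagonal part weighted by $\var(X_i)$ and an off-diagonal part weighted by the common covariance.

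The next step is to collapse the off-diagonal sum using the algebraic identity $\sum_{i\neq j}\lrangle{\bz_i}{\bz_j}=\norm{\sum_{i=1}^n\bz_i}^2-\sum_{i=1}^n\norm{\bz_i}^2=n^2\norm{\barbz}^2-\sum_{i=1}^n\norm{\bz_i}^2$. After combining the diagonal and off-diagonal contributions, the constant factors $\frac{1}{n^2}$ and $\frac{1}{n^2(n-1)}$ merge (via $1+\frac{1}{n-1}=\frac{n}{n-1}$) so that the exact variance reduces to $\frac{n-b}{b(n-1)}\big(\frac{1}{n}\sum_{i=1}^n\norm{\bz_i}^2-\norm{\barbz}^2\big)$. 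Discarding the nonnegative term $-\norm{\barbz}^2$ then delivers the stated upper bound.

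I expect the only delicate point to be the covariance computation --- getting both the sign and the precise factor $\frac{1}{n-1}$ correct, since that denominator is exactly what distinguishes the without-replacement bound (with its shrinking prefactor $\frac{n-b}{b(n-1)}$, which vanishes at $b=n$) from the weaker with-replacement bound $\frac{1}{b}\cdot\frac{1}{n}\sum_i\norm{\bz_i}^2$. Everything else is routine bilinear bookkeeping, and the final inequality follows merely by dropping a manifestly nonnegative term, so no concentration or tail estimate is required.
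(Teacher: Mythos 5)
Your proof is correct, and in fact the paper does not prove this lemma at all: it is stated in the appendix of technical lemmas with a pointer to Nitanda (2014), so your argument supplies the proof that the paper outsources to a citation. The indicator-variable computation is the standard one, and your numbers check out: $\cov(X_i,X_j)=\frac{b(b-1)}{n(n-1)}-\frac{b^2}{n^2}=-\frac{b(n-b)}{n^2(n-1)}$ for $i\ne j$, and combining the diagonal and off-diagonal sums via $1+\frac{1}{n-1}=\frac{n}{n-1}$ gives the exact identity
\begin{equation}
\bbE_{\calS}\left[\norm{\frac{1}{b}\sum_{i\in\calS}\bz_i-\barbz}^2\right]=\frac{n-b}{b(n-1)}\left(\frac{1}{n}\sum_{i=1}^n\norm{\bz_i}^2-\norm{\barbz}^2\right),\nonumber
\end{equation}
from which the stated bound follows by dropping the nonnegative term $\norm{\barbz}^2$, exactly as you say; your observation that the prefactor vanishes at $b=n$ (the finite-population correction absent in with-replacement sampling) is also right.
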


\bibliographystyle{IEEEtran}
\bibliography{mach_learn,math_opt,dataset,stat_ref,stoc_ref,quasi_newton,ORNMF_ref}

\begin{thebibliography}{10}
\providecommand{\url}[1]{#1}
\csname url@samestyle\endcsname
\providecommand{\newblock}{\relax}
\providecommand{\bibinfo}[2]{#2}
\providecommand{\BIBentrySTDinterwordspacing}{\spaceskip=0pt\relax}
\providecommand{\BIBentryALTinterwordstretchfactor}{4}
\providecommand{\BIBentryALTinterwordspacing}{\spaceskip=\fontdimen2\font plus
\BIBentryALTinterwordstretchfactor\fontdimen3\font minus
  \fontdimen4\font\relax}
\providecommand{\BIBforeignlanguage}[2]{{%
\expandafter\ifx\csname l@#1\endcsname\relax
\typeout{** WARNING: IEEEtran.bst: No hyphenation pattern has been}%
\typeout{** loaded for the language `#1'. Using the pattern for}%
\typeout{** the default language instead.}%
\else
\language=\csname l@#1\endcsname
\fi
#2}}
\providecommand{\BIBdecl}{\relax}
\BIBdecl

\bibitem{Zhao_17d}
R.~Zhao, W.~B. Haskell, and V.~Y.~F. Tan, ``Stochastic l-bfgs revisited:
  Improved convergence rates and practical acceleration strategies,'' in
  \emph{Proc. UAI}, Sydney, Australia, Aug 2017.

\bibitem{Liu_89}
D.~C. Liu and J.~Nocedal, ``On the limited memory bfgs method for large scale
  optimization,'' \emph{Math. Program.}, vol.~45, no.~3, Dec. 1989.

\bibitem{Bottou_98}
L.~Bottou, ``Online algorithms and stochastic approximations,'' in \emph{Online
  Learning and Neural Networks}.\hskip 1em plus 0.5em minus 0.4em\relax
  Cambridge University Press, 1998.

\bibitem{Bottou_04}
L.~Bottou and Y.~{LeCun}, ``Large scale online learning,'' in \emph{Proc.\
  NIPS}, Vancouver, BC, Canada, Dec. 2004, pp. 1361--1368.

\bibitem{Johnson_13}
R.~Johnson and T.~Zhang, ``Accelerating stochastic gradient descent using
  predictive variance reduction,'' in \emph{Proc.\ NIPS}, Lake Tahoe, Nevada,
  USA, 2013, pp. 315--323.

\bibitem{Defazio_14}
A.~Defazio, F.~Bach, and S.~Lacoste-Julien, ``Saga: A fast incremental gradient
  method with support for non-strongly convex composite objectives,'' in
  \emph{Proc.\ NIPS}, Montr\'eal, Qu\'ebec, Canada, 2014, pp. 1646--1654.

\bibitem{Reza_15}
R.~Harikandeh, M.~O. Ahmed, A.~Virani, M.~Schmidt, J.~Kone\v{c}n\'{y}, and
  S.~Sallinen, ``Stopwasting my gradients: Practical svrg,'' in \emph{Proc.\
  NIPS}, Montr\'eal, Quebec, Canada, 2015, pp. 2251--2259.

\bibitem{Schmidt_17}
M.~Schmidt, N.~Le~Roux, and F.~Bach, ``Minimizing finite sums with the
  stochastic average gradient,'' \emph{Math. Program.}, vol. 162, no.~1, pp.
  83--112, 2017.

\bibitem{Schra_07}
N.~N. Schraudolph, J.~Yu, and S.~G\"unter, ``A stochastic quasi-{N}ewton method
  for online convex optimization,'' in \emph{Proc.\ ATSTATS}, San Juan, Puerto
  Rico, March 2007.

\bibitem{Bordes_09}
A.~Bordes, L.~Bottou, and P.~Gallinari, ``{SGD-QN}: Careful quasi-newton
  stochastic gradient descent,'' \emph{J. Mach. Learn. Res.}, vol.~10, pp.
  1737--1754, Dec. 2009.

\bibitem{Sohl_14}
J.~Sohl-Dickstein, B.~Poole, and S.~Ganguli, ``Fast large-scale optimization by
  unifying stochastic gradient and quasi-newton methods,'' in \emph{Proc.\
  ICML}, Beijing, China, June 2014, pp. 604--612.

\bibitem{Mokh_14}
A.~Mokhtari and A.~Ribeiro, ``{RES}: Regularized stochastic bfgs algorithm,''
  \emph{IEEE Trans. Signal Process.}, vol.~62, no.~23, pp. 6089--6104, 2014.

\bibitem{Mokh_15}
------, ``Global convergence of online limited memory {BFGS},'' \emph{J. Mach.
  Learn. Res.}, vol.~16, no.~1, pp. 3151--3181, Jan 2015.

\bibitem{Mokh_17}
A.~Mokhtari, M.~Eisen, and A.~Ribeiro, ``{IQN}: An incremental quasi-newton
  method with local superlinear convergence rate,'' arXiv:1702.00709, 2017.

\bibitem{Byrd_16a}
R.~H. Byrd, S.~L. Hansen, J.~Nocedal, and Y.~Singer, ``A stochastic
  quasi-newton method for large-scale optimization,'' \emph{SIAM J. Optim.},
  vol.~26, no.~2, pp. 1008--1031, 2016.

\bibitem{Moritz_16}
P.~Moritz, R.~Nishihara, and M.~I. Jordan, ``A linearly-convergent stochastic
  {L-BFGS} algorithm,'' in \emph{Proc.\ AISTATS}, Cadiz, Spain, May 2016, pp.
  249--258.

\bibitem{Gower_16}
R.~M. Gower, D.~Goldfarb, and P.~Richt{\'{a}}rik, ``Stochastic block {BFGS:}
  squeezing more curvature out of data,'' in \emph{Proc.\ ICML}, New York City,
  NY, USA, June 2016, pp. 1869--1878.

\bibitem{Wei_13}
E.~Wei, A.~Ozdaglar, and A.~Jadbabaie, ``A distributed newton method for
  network utility maximization--i: Algorithm,'' \emph{IEEE Trans. Autom.
  Control}, vol.~58, no.~9, pp. 2162--2175, 2013.

\bibitem{Bert_15}
D.~P. Bertsekas, ``Centralized and distributed newton methods for network
  optimization and extensions,'' arXiv:1507.00702, 2015.

\bibitem{Mokh_16}
A.~Mokhtari, A.~Koppel, and A.~Ribeiro, ``A class of parallel doubly stochastic
  algorithms for large-scale learning,'' arXiv:1606.04991, 2016.

\bibitem{Mokh_16b}
A.~Mokhtari, W.~Shi, Q.~Ling, and A.~Ribeiro, ``A decentralized second-order
  method with exact linear convergence rate for consensus optimization,''
  \emph{IEEE Trans. Signal Inf. Process. Netw.}, vol.~2, no.~4, pp. 507--522,
  2016.

\bibitem{Mokh_17b}
A.~Mokhtari, Q.~Ling, and A.~Ribeiro, ``Network newton distributed optimization
  methods,'' \emph{IEEE Trans. Signal Process.}, vol.~65, no.~1, pp. 146--161,
  2017.

\bibitem{Eisen_16}
M.~Eisen, A.~Mokhtari, and A.~Ribeiro, ``A decentralized quasi-newton method
  for dual formulations of consensus optimization,'' in \emph{Proc. CDC}, 2016,
  pp. 1951--1958.

\bibitem{Eisen_17}
------, ``Decentralized quasi-newton methods,'' \emph{IEEE Trans. Signal
  Process.}, vol.~65, no.~10, pp. 2613--2628, 2017.

\bibitem{Nocedal_06}
J.~Nocedal and S.~J. Wright, \emph{Numerical Optimization}, 2nd~ed.\hskip 1em
  plus 0.5em minus 0.4em\relax New York: Springer, 2006.

\bibitem{Gold_70}
D.~Goldfarb, ``A family of variable metric updates derived by variational
  means,'' \emph{Math. Comput.}, vol.~24, no. 109, pp. 23–--26, 1970.

\bibitem{Xiao_14}
L.~Xiao and T.~Zhang, ``A proximal stochastic gradient method with progressive
  variance reduction,'' \emph{SIAM J. Optim.}, vol.~24, no.~4, pp. 2057--2075,
  2014.

\bibitem{Zhao_15}
P.~Zhao and T.~Zhang, ``Stochastic optimization with importance sampling for
  regularized loss minimization,'' in \emph{Proc.\ ICML}, Lille, France, Jul
  2015, pp. 1--9.

\bibitem{Goldfarb_17}
D.~Goldfarb, G.~Iyengar, and C.~Zhou, ``{Linear Convergence of Stochastic Frank
  Wolfe Variants},'' in \emph{Proc. AISTATS}, Fort Lauderdale, FL, USA, Apr
  2017, pp. 1066--1074.

\bibitem{Will_91}
D.~Williams, \emph{Probability with Martingales}.\hskip 1em plus 0.5em minus
  0.4em\relax Cambridge University Press, 1991.

\bibitem{Lin_15}
H.~Lin, J.~Mairal, and Z.~Harchaoui, ``A universal catalyst for first-order
  optimization,'' in \emph{Proc.\ NIPS}, 2015, pp. 3384--3392.

\bibitem{Allen_Zhu_16}
Z.~Allen-Zhu, ``Katyusha: The first direct acceleration of stochastic gradient
  methods,'' arXiv:1603.05953, 2016.

\bibitem{Konecny_13}
J.~Kone\v{c}n\'y and P.~Richt\'arik, ``Semi-stochastic gradient descent
  methods,'' arXiv:1312.1666, 2013.

\bibitem{Borkar_08}
V.~S. Borkar, \emph{Stochastic approximation: a dynamical systems
  viewpoint}.\hskip 1em plus 0.5em minus 0.4em\relax Cambridge, 2008.

\bibitem{Hu_09}
C.~Hu, W.~Pan, and J.~T. Kwok, ``Accelerated gradient methods for stochastic
  optimization and online learning,'' in \emph{Porc.\ NIPS}, Vancouver, B.C.,
  Canada, 2009, pp. 781--789.

\bibitem{Woodruff_14}
D.~P. Woodruff, ``Sketching as a tool for numerical linear algebra,''
  \emph{Found. Trends Theor. Comput. Sci.}, vol.~10, no. 1--2, pp. 1--157,
  2014.

\bibitem{Pilanci_16}
M.~Pilanci and M.~J. Wainwright, ``Iterative hessian sketch: Fast and accurate
  solution approximation for constrained least-squares,'' \emph{J. Mach. Learn.
  Res.}, vol.~17, no.~53, pp. 1--38, 2016.

\bibitem{Luo_16}
L.~Luo, Z.~Chen, Z.~Zhang, and W.-J. Li, ``A proximal stochastic quasi-newton
  algorithm,'' arXiv:1602.00223, 2016.

\bibitem{Pilanci_17}
M.~Pilanci and M.~J. Wainwright, ``Newton sketch: A near linear-time
  optimization algorithm with linear-quadratic convergence,'' \emph{SIAM J.
  Optim.}, vol.~27, no.~1, pp. 205--245, 2017.

\bibitem{Ghash_16}
M.~Ghashami, E.~Liberty, J.~M. Phillips, and D.~P. Woodruff, ``Frequent
  directions: Simple and deterministic matrix sketching,'' \emph{SIAM J.
  Comput.}, vol.~45, no.~5, pp. 1762--1792, 2016.

\bibitem{Gower_14}
R.~M. Gower and J.~Gondzio, ``Action constrained quasi-newton methods,'' 2014.

\bibitem{Hennig_15}
P.~Hennig, ``Probabilistic interpretation of linear solvers,'' \emph{SIAM J.
  Optim.}, vol.~25, no.~1, pp. 234--260, 2015.

\bibitem{Chang_11}
C.-C. Chang and C.-J. Lin, ``{LIBSVM}: A library for support vector machines,''
  \emph{ACM Trans. Intell. Syst. Technol.}, vol.~2, pp. 27:1--27:27, 2011.

\bibitem{Zhu_97}
C.~Zhu, R.~H. Byrd, P.~Lu, and J.~Nocedal, ``Algorithm 778: L-bfgs-b: Fortran
  subroutines for large-scale bound-constrained optimization,'' \emph{ACM
  Trans. Math. Softw.}, vol.~23, no.~4, Dec 1997.

\bibitem{Boyd_04}
S.~Boyd and L.~Vandenberghe, \emph{Convex Optimization}.\hskip 1em plus 0.5em
  minus 0.4em\relax Cambridge University Press, 2004.

\bibitem{Nitanda_14}
A.~Nitanda, ``Stochastic proximal gradient descent with acceleration
  techniques,'' in \emph{Proc.\ NIPS}, Montreal, Quebec, Canada, Dec. 2014, pp.
  1574--1582.

\end{thebibliography}

\end{document}